%% file: 0main.tex
\documentclass[reqno]{amsart}
\usepackage{comment}
\usepackage{amsthm}
\usepackage{amssymb}
\usepackage{amsmath}
\usepackage{bbm}
\usepackage{tikz-cd}
\RequirePackage{tikz-cd}
\RequirePackage{amssymb}
\usetikzlibrary{calc}
\usetikzlibrary{decorations.pathmorphing}

\tikzset{curve/.style={settings={#1},to path={(\tikztostart)
    .. controls ($(\tikztostart)!\pv{pos}!(\tikztotarget)!\pv{height}!270:(\tikztotarget)$)
    and ($(\tikztostart)!1-\pv{pos}!(\tikztotarget)!\pv{height}!270:(\tikztotarget)$)
    .. (\tikztotarget)\tikztonodes}},
    settings/.code={\tikzset{quiver/.cd,#1}
        \def\pv##1{\pgfkeysvalueof{/tikz/quiver/##1}}},
    quiver/.cd,pos/.initial=0.35,height/.initial=0}

\tikzset{tail reversed/.code={\pgfsetarrowsstart{tikzcd to}}}
\tikzset{2tail/.code={\pgfsetarrowsstart{Implies[reversed]}}}
\tikzset{2tail reversed/.code={\pgfsetarrowsstart{Implies}}}
\tikzset{no body/.style={/tikz/dash pattern=on 0 off 1mm}}
\usepackage[left=3.3cm,right=3.3cm,bottom=3.7cm,asymmetric]{geometry}
\usepackage{mlmodern}
\usepackage[T1]{fontenc}

\usepackage[backend=bibtex, 
bibstyle=ext-numeric, 
citestyle= ext-numeric, 
giveninits = true,
sorting=nyt,
doi=false,
isbn=false,
maxnames=50]{biblatex} 
\usepackage{biblatex-shortfields}
\renewbibmacro{in:}{}

\DeclareFieldFormat{pages}{#1}

\DeclareFieldFormat{labelnumber}{\printtext{\upshape \bfseries #1}}
\DeclareFieldFormat{labelnumberwidth}{\mkbibbold{#1\adddot}}

\renewbibmacro*{volume+number+eid}{%
  \printfield{volume}%
  \printfield{number}%
  \setunit{\bibeidpunct}%
  \printfield{eid}}

\DeclareFieldFormat[article]{number}{ \mkbibparens{#1}}

\DeclareFieldFormat[article]{issuedate}{\printtext[parens]{#1}\printunit{\addspace}}

\DeclareFieldFormat[inbook]{series}{#1\nopunct}
\DeclareFieldFormat[inbook]{number}{#1\nopunct}
\DeclareFieldFormat[book]{number}{#1\nopunct}
\DeclareFieldFormat[book]{series}{#1\nopunct}
\DeclareFieldFormat[incollection]{booktitle}{\itshape#1\nopunct}

\renewbibmacro*{publisher+location+date}{%
  \printtext[parens]{
    \printlist{publisher}%
    \setunit*{\addcomma\space}%
    \printlist{location}%
    \setunit*{\addcomma\space}%
    \usebibmacro{date}%
  }\printunit{\addspace}}

\DeclareFieldFormat[misc,article,thesis]{title}{`#1'}
\DeclareFieldFormat{url}{\url{#1}}

\renewbibmacro*{byeditor+others}{%
  \ifnameundef{editor}
    {}
    {\printtext[parens]{\usebibmacro{editor+othersstrg}%
     \setunit{\addspace}%
     \printnames[byeditor]{editor}}%
     \clearname{editor}%
     \newunit}%
  \usebibmacro{byeditorx}%
  \usebibmacro{bytranslator+others}}

\usepackage{xcolor}
\usepackage{hyperref}
\hypersetup{
	colorlinks=true,
	linkcolor=purple,
	citecolor=teal,
	linktoc=page
}
\usepackage{fancyhdr}
\theoremstyle{definition}
\newtheorem{defin}{Definition}[section]

\theoremstyle{remark}
\newtheorem{example}[defin]{Example}
\newtheorem{remark}[defin]{Remark}

\theoremstyle{plain}
\newtheorem{lem}[defin]{Lemma}
\newtheorem{prop}[defin]{Proposition}
\newtheorem{thm}[defin]{Theorem}
\newtheorem{cor}[defin]{Corollary}
\newtheorem*{claim}{Claim}

\newtheorem{mainthm}{Theorem}

\newcommand{\C}{\mathcal{C}}
\newcommand{\M}{\mathcal{M}}
\newcommand{\N}{\mathcal{N}}
\newcommand{\E}{\mathcal{E}}
\newcommand{\U}{\mathcal{U}}
\newcommand{\Lang}{\mathcal{L}}
\newcommand{\Hom}{\textup{Hom}}
\newcommand{\Fun}{\textup{Fun}}
\newcommand{\Pro}{\textup{Pro}}
\newcommand{\defT}{\textup{def}}
\newcommand{\tpdefT}{\textup{tpdef}}
\newcommand{\tp}{\textup{tp}}

\DeclareMathOperator*{\colim}{colim}

\title{Hyperimaginaries and Exactness of the Pro-Completion}
\author{Owen Ngo Hang Chan}
\address{Department of Mathematics, University of Manchester, Oxford Road, Manchester, United Kingdom M13 9PL}
\email{owen.chan@manchester.ac.uk}
\urladdr{https://sites.google.com/view/ngo-hang-chan/}
\date{\today}
\subjclass[2020]{03C45, 03C95, 18E08}
\keywords{hyperimaginaries, pro-completion, exact categories}

\begin{document}

\begin{abstract}
    Given a complete first-order theory $T$, we characterise elimination of hyperimaginaries in $T$ in terms of the exactness of the pro-completion $\Pro(\defT(T))$ of the syntactic category $\defT(T)$ of $T$, thus extending Makkai's result connecting elimination of imaginaries with the exactness of $\defT(T)$. Likewise, we characterise the heq construction in terms of the exact completion of $\Pro(\defT(T))$. 
\end{abstract}

\maketitle

\tableofcontents
\input{1Introduction}

\input{2eq_and_exact.tex}

\input{3Basic_properties_of_pro.tex}

\input{4elimination_of_hyperimaginaries.tex}

\input{5heq.tex}

\bigskip

\linespread{1}
\printbibliography
\end{document}

%% file: 1Introduction.tex
\section{Introduction}

While establishing classification theory, Shelah introduced the eq construction of a first-order theory $T$ and studied what he called imaginary elements \cite[Ch.~III \S6]{shelahbook}. This was done so that every set definable with parameters has a minimal set of defining parameters, called its canonical parameter. The idea is simple: an imaginary element is an equivalence class of a definable equivalence relation; the eq construction expands a theory $T$ to $T^{eq}$ by freely adding in quotients of definable equivalence relations as new sorts. In \cite{poizat_imaginary_galois}, Poizat defined that $T$ eliminates imaginaries if such quotients can be naturally identified with a definable set in $T$. He went on to show that the theory of algebraically closed fields and the theory of differentially closed fields with characteristic zero both eliminate imaginaries, thereby establishing a model-theoretic generalisation of classical Galois theory and differential Galois theory. More recently, Hrushovski \cite{hrushovski_groupoid} studied generalised imaginaries, replacing definable equivalence relations with definable groupoids, which are the vertical categorification of equivalence relations. He showed that generalised imaginaries are related to internal covers of $T$ and type amalgamation. This is subsequently generalised in \cite{GoodrickJohn2013Afab,GoodrickJohn2014Tpap,GoodrickJohn2010Gca3,kamensky_cat_app_int,kamensky_higher_int_cov,moosa_haykazyan,wang_simp_grpd}, providing a fecund field for interactions between model theory and category theory, and indeed higher category theory.

\smallskip

On the other hand, category theorists were also studying equivalence relations internal to a category and the existence of their quotients, as well as the free completion of a category under these quotients. A category with all such quotients is called exact\footnote{There are two unrelated notions of exactness in category theory, called Barr-exactness and Quillen-exactness. The exactness considered in this paper is Barr-exactness.}; the free construction is called the ex/reg completion; we write $\C_{ex/reg}$ for the ex/reg completion of a regular category $\C$. The subscript `ex' stands for exact and `reg' stands for regular. The ex/reg completion freely turns a regular category into an exact category. This is to distinguish it from the other exact completion---the ex/lex completion---which freely turns a lex category---a category with finite limits---into an exact category. The first references come from Barr \cite{Barrbook} and Lawvere \cite{Lawvere_Perugia}. 
It is evident that Lawvere had already known about what was eventually named ex/reg completion. Indeed, in \cite[pp.~84--85]{Lawvere_Perugia}) he sketched out the construction, but it would be Succi Cruciani \cite{cruciani}, and Carboni and Vitale \cite{CarboniA.1998Raec}, who proved that the construction described by Lawvere is indeed the free construction and has the expected universal property. Lack \cite{LackStephen1999Anot} provides another description for this construction in terms of sheaves, which was later generalised by Shulman \cite{ShulmanMichael2012Ecas}. There is a related construction---the pretopos completion of a coherent category---which often coincides with the ex/reg completion; for example, see \cite[Proposition 5.1]{EmmeneggerJacopo2020Edac}. As a result, there is potential for confusion for which construction is relevant in which setting. Shulman's work \cite{ShulmanMichael2012Ecas} provides a framework of seeing the ex/reg completion as the finitary version of the pretopos completion.

\smallskip

The connection between the model-theoretic approach and the category-theoretic approach is attributed to Makkai, who, according to Harnik's expository paper \cite{harnik}, knew already in 1980 that `$T^{eq}$ is nothing but the pretopos completion'. Indeed, the subject of categorical model theory was established in Makkai's joint work with Reyes in \cite{alma992977584040101631}. Briefly, given a complete first-order theory $T$, one can define the syntactic category of $T$ as the category of definable sets and definable functions, denoted by $\defT(T)$. Since our logic is first-order, it is in particular regular, which is the fragment of first-order logic consisting of truth $\top$, finite conjunction $\wedge$, equality $=$, and existential quantification $\exists$. Hence, $\defT(T)$ is a regular category. In fact, the inclusion of finite disjunction $\vee$ in the logic makes $\defT(T)$ coherent, and the inclusion of negation $\neg$ makes $\defT(T)$ Boolean. The syntactic category encodes all the information of $T$ in a language-agnostic way: the category does not `see' whether a formula is quantifier-free or not. Nonetheless, much model theory can be done by studying $\defT(T)$; for example, the category $\textup{Coh}(\defT(T),\textbf{Set})$ of coherent functors into the category of sets is equivalent to the category of models and elementary embeddings \cite[Ch. 8 \S1]{alma992977584040101631}. 

\smallskip 

The work of Makkai and Reyes includes the celebrated result known as conceptual completeness \cite[Ch. 7]{alma992977584040101631}. This says that if an interpretation of a theory $T_1$ in another theory $T_2$ induces an equivalence between the two categories of models, then this interpretation is in fact a bi-interpretation up to the eq construction, or equivalently, up to the pretopos completion. This is perhaps another reason why the eq construction has become synonymous with the pretopos completion, as seen in \cite{harnik}. In \S\ref{preliminaries}, we will review this and will find that, it is exactness, rather than the property of being a pretopos, that is relevant to imaginaries. Indeed, we see in Theorem~\ref{degenerate theorem} that certain non-degenerate assumptions are needed for the pretopos completion of $\defT(T)$ to coincide with $\defT(T^{eq})$. On the contrary, as we shall see in Theorem \ref{Teq is exact}, no such non-degenerate assumptions are needed to prove 
\[\defT(T^{eq})\simeq\defT(T)_{ex/reg}.\]

In \S\ref{pro}, we study the pro-completion $\Pro(\defT(T))$ of $\defT(T)$ with a view of generalising in \S\ref{hyperimaginaries} the connection between imaginaries and exactness to a connection between the projective limits of imaginaries, called hyperimaginaries, and the exactness of $\Pro(\defT(T))$. While instances of projective limits, or equivalently cofiltered limits, have been studied for over a century---Hensel was studying the $p$-adic numbers as early as 1897---the completion $\Pro (\C)$ of a category $\C$ under projective limits was first introduced by Grothendieck \cite{Grothendieck1958-1960}. Similar to the ex/reg completion, the pro-completion freely adds projective limits into a category. This completion can be described as a certain subcategory of the category of copresheaves on $\C$. When one dualises this construction, one obtains the ind-completion, which freely adds in direct limits, or equivalently filtered colimits \cite[Expos\'{e} I \S8.10]{a.TheorieToposCohomologie1972}. 

\smallskip

Barr \cite{BARR1986113} studied $\Pro (\C)$ in the case when $\C$ is regular. It turns out that many nice properties of $\C$ transfer to $\Pro(\C)$, such as regularity and coherence. Some properties do not transfer, for example, being Boolean. Model-theoretically, this is saying that the complement of a type-definable set is a $\bigvee$-type definable set, that is, definable with infinitary disjunction, rather than a type-definable set. Another property that does not transfer is exactness. 

\begin{example}\label{finset}
    Let $\C$ be the category \textbf{FinSet} of finite sets and all functions, which is a pretopos and in particular exact. Its pro-completion is the category \textbf{Stone} of Stone spaces and continuous functions (see \cite[Ch.~VI \S 2.3]{alma9912295854401631}), which is not exact. Indeed, its ex/reg completion is \textbf{CHaus}, the category of compact Hausdorff spaces and continuous functions; see \cite[Theorem 8.1]{MarraVincenzo2020Acot}. This fits into the $\defT(T)$ framework: consider $T$ to be the theory of the structure with sorts $S_n$ for each $n\in\omega$, where each sort $S_n$ is interpreted as a set containing exactly $n$ elements, with every element named by a constant. Then, $\defT(T) \simeq \textbf{FinSet}$.
\end{example}

Of course, there are examples where $\Pro(\C)$ is exact. When $\C$ is the category \textbf{FinGrp} of finite groups, the pro-completion is the category \textbf{ProfGrp} of profinite groups and continuous group homomorphisms. Now, equivalence relations of profinite groups correspond to closed subgroups. Since the quotient of a profinite group by a closed subgroup is again a profinite group \cite[Proposition~2.2.1~(a)]{profinite_groups}, \textbf{ProfGrp} is exact.

\smallskip

It was Kamensky \cite{KAMENSKY2007180}, who first studied $\Pro(\defT(T))$ in the context of model theory, relating it to the types of $T$. By types, we mean $\ast$-types without parameters, which are sets of formulas in an infinite tuple of variables; likewise, given a model $\M$, a type-definable set is a subset of $\M^I$, whose elements are the realisations of a type and where $I$ is an infinite indexing set. Then, the objects of $\Pro(\defT(T))$ can be thought of as the types of $T$. The morphisms correspond to types, whose interpretation in every model is the graph of a function. Equivalently, one can fix a monster model $\U$ and ask for the interpretation in $\U$ to be the graph of a function. In \S\ref{subsect_saturation}, we show a new characterisation of saturation in terms of the preservation of certain regular epimorphisms in $\Pro(\defT(T))$; namely, a model $\M$ is $\kappa$-saturated if and only if for any regular epimorphism in $\Pro(\defT(T))$, whose codomain has variable length strictly less than $\kappa$, the interpretation in $\M$ is a surjection. Now, hyperimaginaries are roughly speaking quotients of types. If we wish to study them categorically, we would need a category of types. Hence, $\Pro(\defT(T))$ is exactly the right setting to study hyperimaginaries.

\smallskip

Although stability is arguably the poster child of classification theory, a slightly weaker property---simplicity---introduced by Shelah \cite{SHELAH1980177}, also received attention for its nice behaviours, culminating in the celebrated Kim-Pillay Theorem \cite{KIM1997149}. One thing that model-theorists noticed was that, while working in stable theories usually requires one to deal with imaginaries, working in simple theories requires one to deal with hyperimaginaries, which are heuristically `type-definable imaginaries'. More precisely, a hyperimaginary is an equivalence class of a type-definable equivalence relation. We say a theory eliminates hyperimaginaries if every hyperimaginary is interdefinable with a sequence of imaginaries; this is equivalent to saying that every type-definable equivalence relation is the intersection of some definable equivalence relations. The first mention of this is in \cite{Pillay_Poizat_1987}, where Pillay and Poizat proved that stable theories eliminate hyperimaginaries. In \cite{BuechlerSteven2001St}, this was generalised to supersimple theories, yet whether simple theories eliminate hyperimaginaries remains an open question. 

\smallskip

So far, the model theory and the category theory are well-known, though perhaps only to model theorists and categorical theorists respectively. In order to make this paper accessible to both communities, we include basic definitions, as well as proofs for elementary facts. We hope this will be helpful in further bridging the two areas.

\smallskip

We come to the main original contributions of the paper in \S\S\ref{hyperimaginaries}, \ref{heq}. We are interested in whether $\Pro(\defT(T))$ is exact. Now, we shall see in Proposition~\ref{if prodef exact then def exact} that, if the quotient of a definable equivalence relation is type-definable, then the quotient itself is also definable. In other words, if we want $\Pro(\defT(T))$ to be exact, then we should require $\defT(T)$ to also be exact. Hence, we might as well assume $T$ eliminates imaginaries, or work over $\defT(T^{eq})$, equivalently $\defT(T)_{ex/reg}$. Assuming this, elimination of hyperimaginaries allows us to decompose a type-definable equivalence relation as a cofiltered limit of definable equivalence relations, each of which has a definable quotient. This allows us to obtain in Theorem~\ref{eliminate hyper iff pro exact} the following characterisation:
\[T \textup{ eliminates hyperimaginaries } \Leftrightarrow \ \Pro(\defT(T)_{ex/reg}) \textup{ is exact.}\]

Similar to the eq construction for imaginaries, there is an heq construction for hyperimaginaries. We study this in \S\ref{heq}. The construction comes from \cite{Hart2000-HARCAC-24}, but unlike the eq construction, hyperimaginaries cannot be na\"{i}vely added to a structure as real elements. Indeed, \cite[Example 3.1.6]{alma992983361725701631} shows that the quotient of a type-definable equivalence relation can have bounded infinite cardinality. If we simply add the quotient as a new sort, by compactness there are elementarily equivalent structures, whose interpretations of that sort have unbounded cardinalities. Moreover, one cannot add the quotient map as a new function symbol, since the sort of the domain might be of infinite length. Nevertheless, \cite{Hart2000-HARCAC-24} constructs $\U^{heq}$ from a monster model $\U$ as an expansion with quotients of type-definable equivalence relations as new sorts. Instead of considering $\U^{heq}$ as a first-order structure in a certain language, one considers how automorphisms of $\U$ extends to automorphisms of $\mathcal{U}^{heq}$ and, using this, defines the various closure operators like dcl, acl and bbd.

\smallskip

Shortly afterwards, Ben-Yaacov started developing positive model theory and noted in \cite[Example 2.16]{Ben-Yaacov2003-BENPMT} that $T^{heq}$ can be given a syntax. This was further developed by Dobrowolski and Kamsma \cite{DobrowolskiJan2022Kipl} to study NSOP$_1$ in positive logic. Omitting negation, as in positivie logic, turns out to be the correct choice for studying hyperimaginaries, since type-definable sets are not closed under complement. In \S\ref{heq}, we will argue that the correct logic for studying hyperimaginaries is a logic with infinite conjunctions, infinite existential quantification, finite disjunction, and no negation. This is the internal logic of $\infty$-coherent categories, such as $\Pro(\defT(T))$. Using this, we will define an infinitary syntactic category associated to $\U^{heq}$, denoted by $\tpdefT(\U^{heq})$, and prove that 
\[\tpdefT(\U^{heq}) \simeq (\Pro(\defT(T)))_{ex/reg}. \] 

We summarise our results.

\begin{mainthm}
    \input{6main_theorem}

\end{mainthm}

\smallskip

\noindent\textbf{Acknowledgements.} This work is part of my PhD research, done under the supervision of Omar Le\'on S\'anchez and the co-supervision of Nicola Gambino. I would like to thank both for their guidance. I would also like to thank Mark Kamsma, who posed the question that this paper is answering, as well as Giacomo Tendas for helpful discussions.

%% file: 6main_theorem.tex
    Let $T$ be a consistent complete first-order theory and let $\U$ be a monster model.
    \begin{enumerate}
    \item $T$ eliminates imaginaries if and only if $\defT(T)$ is exact.

    \item $T$ eliminates hyperimaginaries if and only if $\Pro(\defT(T)_{ex/reg})$ is exact.
        
    \item $T$ eliminates imaginaries and hyperimaginaries if and only if $\Pro(\defT(T))$ is exact.

    \item We have the following equivalences of categories: 
        \begin{align*}
            \defT(T^{eq}) &\simeq \defT(T)_{ex/reg}, \\
            \tpdefT(\U^{heq}) &\simeq (\Pro(\defT(T)))_{ex/reg}.
        \end{align*}
    \end{enumerate}

%% file: 2eq_and_exact.tex
\bigskip

\section{Imaginaries and exactness}\label{preliminaries}
In this section, we establish some preliminaries and review the connection between exactness and imaginaries.

\medskip

\subsection{Regularity}
Let $T$ be a consistent complete first-order theory. (Much of this can be done in the more general setting of positive or coherent logic, which is the fragment of first-order logic consisting of truth $\top$, falsity $\bot$, finite disjunction $\vee$, finite conjunction $\wedge$, equality $=$, and existential quantification $\exists$, or regular logic, which consists of $\top,\wedge,=,$ and $\exists$.) Let $\mathcal{U}$ be a monster model of $T$. We define the category associated with $T$, known to category theorists as the syntactic category and to model theorists as the category of definable sets and functions. By definable, we mean 0-definable.

\begin{defin}
    We define the category $\defT(T)$ as follows:
    \begin{itemize}
        \item Objects are definable subsets of $\mathcal{U}^n$ for any $n\in\omega$;
        \item Morphisms are definable functions.
    \end{itemize} 
    
    Given a formula $\phi(x)$, we write $[\phi(x)]$ for the object associated with it. 

    Equivalently, the objects are formulas modulo $T$, and the morphisms are formulas, which $T$ proves to be a function, modulo $T$.
\end{defin}

We note here that $\defT(T)$ contains exactly two objects in the empty sort, namely $[\top]$ and $[\bot]$ considered as the two subsets of $\mathcal{U}^0 = \{ \ast \}$. 

The category $\defT(T)$ inherits much of the logical structure of $T$, in the sense that $T$ is the interal logic of $\defT(T)$. For an exposition on this topic, see \cite{Butz1998}.

\begin{defin}
Let $\C$ and $\mathcal{D}$ be categories. Let $f:X\rightarrow Y$ be a morphism in $\C$.
\begin{enumerate} 
    \item The \textit{kernel pair} of $f$ is the pair $(p,q)$ of morphisms obtained by pulling back $f$ with itself.
    \[\begin{tikzcd}
    	P & X \\
    	X & Y
    	\arrow["p", from=1-1, to=1-2]
    	\arrow["q"', from=1-1, to=2-1]
    	\arrow["\lrcorner"{anchor=center, pos=0.125}, draw=none, from=1-1, to=2-2]
    	\arrow["f", from=1-2, to=2-2]
    	\arrow["f"', from=2-1, to=2-2]
    \end{tikzcd}\]
    \item The morphism $f$ is a \emph{regular epimorphism} if there is a parallel pair of morphisms $g,h:W\rightrightarrows X$ such that $f$ is the coequaliser of $(g,h)$. This implies that $f$ is an epimorphism.
    \item The category $\C$ is \textit{regular} if it has all finite limits, coequalisers of kernel pairs exist, and regular epimorphisms are stable under pullback.
    \item Suppose $\C$ and $\mathcal{D}$ are regular. A functor $F:\C\rightarrow\mathcal{D}$ is \emph{regular} if $F$ preserves finite limits and regular epimorphisms. 
    \item We denote by $\textup{Reg}(\C,\mathcal{D})$ the full subcategory of the functor category $[\C,\mathcal{D}]$ spanned by regular functors.
\end{enumerate}
\end{defin}

The definition of regular epimorphisms might seem a bit abstract, but in a regular category, they are intuitively surjections and quotients.

\begin{lem}[{\cite[Lemma 2.3, Proposition 2.4]{Butz1998}}]\label{basics of regular}
Let $\C$ be a regular category.
    \begin{enumerate}
        \item A regular epimorphism is the coequaliser of its kernel pair.
        \item A morphism is a regular epimorphism and a monomorphism if and only if it is an isomorphism.
        \item In a regular category, each morphism $f$ can be factored as a regular epimorphism $e$ followed by a monomorphism $m$. This factorisation is unique up to isomorphism.
        \[\begin{tikzcd}
        	& W & \\
        	X && Y
        	\arrow["m", hook, from=1-2, to=2-3]
        	\arrow["e", two heads, from=2-1, to=1-2]
        	\arrow["f"', from=2-1, to=2-3]
        \end{tikzcd}\]
    \end{enumerate}
\end{lem}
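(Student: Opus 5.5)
The plan is to work directly from the definition of a regular category: finite limits, coequalisers of kernel pairs, and pullback-stability of regular epimorphisms. I will prove the three parts in the order (1), (2), (3), since each uses the previous one.

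For (1), let $f:X\to Y$ be the coequaliser of some pair $g,h:W\rightrightarrows X$, and let $(p,q):P\rightrightarrows X$ be its kernel pair. Since $fg=fh$, the universal property of the pullback gives a map $u:W\to P$ with $pu=g$ and $qu=h$. Now take any $k:X\to Z$ with $kp=kq$. Then $kg=kpu=kqu=kh$, so $k$ factors uniquely through $f$. Also $f$ itself satisfies $fp=fq$. Hence $f$ is the coequaliser of $(p,q)$; note that this part does not even need regularity.

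For (2), one direction is trivial, since an isomorphism is both a mono and a coequaliser (of the identity pair). Conversely, suppose $f$ is a monomorphism and a regular epimorphism. Because $f$ is mono, its kernel pair is $(1_X,1_X)$. By (1), $f$ is the coequaliser of $(1_X,1_X)$. But $1_X$ also coequalises this pair, so $f$ is an isomorphism.

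For (3), existence and uniqueness are handled separately.

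\emph{Existence.} Given $f:X\to Y$, take its kernel pair $(p,q)$ and let $e:X\to W$ be their coequaliser, which exists by regularity. Since $fp=fq$, we get $m:W\to Y$ with $me=f$. The main obstacle is showing that $m$ is mono, and pullback-stability is exactly what handles it.
\begin{itemize}
\item Form the kernel pair $(a,b):K\rightrightarrows W$ of $m$. The goal is $a=b$.
\item Pull $e\times e:X\times X\to W\times W$ back along $\langle a,b\rangle:K\to W\times W$. The map $e\times e$ is the composite of $e\times 1$ and $1\times e$, each a pullback of $e$ along a product projection, hence a regular epimorphism.
\item The composite of two regular epimorphisms is again a regular epimorphism in a regular category. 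To see this, factor the composite as a regular epimorphism followed by a mono, using the coequaliser-of-kernel-pair construction together with stability. I will use this standard fact.
\item Pulling back along $\langle a,b\rangle$ therefore gives a regular epimorphism $r:P'\to K$, where $P'$ consists of the pairs $(x,x')$ with $me(x)=me(x')$, that is, $f(x)=f(x')$. So $P'\cong P$, and under this identification $ar=ep$ and $br=eq$.
\item Since $ep=eq$, we get $ar=br$. As $r$ is epi, $a=b$, and so $m$ is mono.
\end{itemize}

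\emph{Uniqueness.} Suppose $f=m'e'$ is another factorisation with $e'$ a regular epi and $m'$ mono. Since $m'$ is mono, the kernel pair of $e'$ coincides with the kernel pair of $f$. By (1), $e'$ is therefore also a coequaliser of $(p,q)$, so $e$ and $e'$ are isomorphic via a comparison $\phi$ with $\phi e=e'$. Then $m'\phi e=m'e'=f=me$, and cancelling the epi $e$ gives $m'\phi=m$.
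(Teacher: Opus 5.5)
There is one genuine gap, in the existence half of (3); parts (1), (2) and the uniqueness half of (3) are correct and are the standard arguments. (The paper gives no proof here and only cites Butz. Your (1) uses nothing beyond the existence of the kernel pair, which matches the paper's later remark that (1) needs only finite limits and coequalisers of kernel pairs.)

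The gap: to conclude that $r$ is a regular epimorphism, you use that composites of regular epimorphisms are regular. You justify this by factoring the composite as a regular epimorphism followed by a monomorphism via the coequaliser-of-kernel-pair construction. But showing that this construction yields a monomorphism is exactly what this step is meant to establish, so the argument is circular. Calling it a ``standard fact'' does not help: the usual proof of closure under composition in a regular category also runs through the image factorisation (factor $gf=me$, then show $m$ is an isomorphism by diagonal fill-in).

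The repair is easy, because you only use that $r$ is an epimorphism. Pull back in two stages.
\begin{enumerate}
\item Pulling back $e\times 1_W$ along $\langle a,b\rangle$ gives $r_1:Q\to K$, a regular epimorphism by stability.
\item Pulling back $1_X\times e$ along the induced map $Q\to X\times W$ gives $r_2:P'\to Q$, again a regular epimorphism by stability.
\end{enumerate}
By pasting of pullbacks, $r=r_1r_2$ is the pullback of $e\times e$ along $\langle a,b\rangle$, and a composite of epimorphisms is an epimorphism. Equivalently, pull $e$ back along $a$, then pull $e$ back along $b$ composed with the resulting map; this is the form in which the argument is usually given.

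With this change, the rest of your argument goes through unchanged: the identification $P'\cong P$, the equations $ar=ep$ and $br=eq$, and the cancellation giving $a=b$. Closure of regular epimorphisms under composition then becomes a consequence of (3) rather than an ingredient of its proof.
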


In the diagram above, the monomorphism $m:W\rightarrow Y$ is called the \emph{image} of $f$. For example, in the regular category of \textbf{Set}, it is precisely the set-theoretic image.

The internal logic of regular categories is known as regular logic, which is the fragment of first-order logic consisting of $\top,\wedge,=,$ and $\exists$; see \cite[\S 3]{Butz1998}. For example, the terminal object represents $\top$, binary products represent $\wedge$ between two formulas with disjoint free variables, equalisers represent $=$, and images represent $\exists$.

We prove a series of lemmas about $\defT(T)$, which will culminate in a characterisation of its regular epimorphisms and a proof that it is a regular category. 

\begin{prop}\label{regular defT}
In the category $\defT(T)$, the following hold:
    \begin{enumerate}
        \item Finite limits exist.
        \item If a morphism is provably surjective, then it is a regular epimorphism.
        \item Kernel pairs have coequalisers.
        \item A morphism is provably injective if and only if it is a monomorphism.
        \item If a morphism is a regular epimorphism, then it is provably surjective.
        \item Regular epimorphisms are stable under pullbacks.
    \end{enumerate}
\end{prop}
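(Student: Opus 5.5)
We work throughout in the monster model $\U$. Objects are definable sets $[\phi(x)]$ and morphisms are definable functions; by completeness of $T$, a statement holds in $\U$ exactly when $T$ proves it. Each of the six items then reduces to building the relevant set-theoretic construction by a formula and checking its universal property against definable test maps. We take the items in order, since (5) uses (3) and (6) uses (2) and (5).

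For (1), the terminal object is $[\top]$ in the empty sort. The product of $[\phi(x)]$ and $[\psi(y)]$ is $[\phi(x)\wedge\psi(y)]$ with disjoint variables, and the projections are defined by $x'=x$ and $y'=y$. The equaliser of $f,g:[\phi(x)]\rightrightarrows[\psi(y)]$ is the subset defined by $\phi(x)\wedge\exists y\,(\Gamma_f(x,y)\wedge\Gamma_g(x,y))$, with the inclusion as the equalising map. A definable map into the product, or one that equalises $f$ and $g$, factors through these objects by the evident formula, and the factorisation is unique because morphisms are determined by their graphs in $\U$. So all finite limits exist, and pullbacks are the usual fibre products.

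For (2), let $f:X\to Y$ be provably surjective. Its kernel pair is $P=\{(x,x'):f(x)=f(x')\}$ by (1). Suppose $g:X\to Z$ is definable and coequalises $P$, so $g$ is constant on the fibres of $f$. Define $h:Y\to Z$ by the formula $\exists x\,(\Gamma_f(x,y)\wedge\Gamma_g(x,z))$. Surjectivity of $f$ makes $h$ total, and constancy of $g$ on fibres makes it single-valued, so $h$ is a definable function with $h\circ f=g$. It is unique because $f$ is surjective. Hence $f$ is the coequaliser of its kernel pair, and in particular a regular epimorphism.

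For (3), take an arbitrary $f:X\to Y$. Factor it through its image $[\exists x\,\Gamma_f(x,y)]$; the corestriction $e$ is provably surjective and has the same kernel pair as $f$. By the argument of (2), $e$ is the coequaliser of that kernel pair, which proves (3).

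For (4), if $f$ is provably injective and $f\circ a=f\circ b$, then $a$ and $b$ agree pointwise in $\U$, so $a=b$. Conversely, suppose $f$ is a monomorphism and let $(p,q):P\rightrightarrows X$ be its kernel pair. Then $f\circ p=f\circ q$, so $p=q$, which means $f(x)=f(x')$ implies $x=x'$ in $\U$.

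Item (5) is the main obstacle, because regular epimorphisms are defined abstractly as coequalisers of some parallel pair. Let $f:X\to Y$ be the coequaliser of $g,h:W\rightrightarrows X$, and factor $f=m\circ e$ through its image, with $m:\operatorname{Im}\to Y$ the inclusion. Since $f\circ g=f\circ h$ and $m$ is injective, $e\circ g=e\circ h$. By the coequaliser property there is $k:Y\to\operatorname{Im}$ with $k\circ f=e$. Then $m\circ k\circ f=m\circ e=f$, and since $f$ is an epimorphism, $m\circ k=\mathrm{id}_Y$. So $m$ is surjective and $f$ is provably surjective.

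For (6), pull back a provably surjective $f:X\to Y$ along $u:Z\to Y$. The pullback is the fibre product $\{(x,z):f(x)=u(z)\}$. The projection to $Z$ is provably surjective, since for any $z$ we can pick $x$ with $f(x)=u(z)$. By (2) and (5) this projection is a regular epimorphism, which completes (6) and shows that $\defT(T)$ is regular.
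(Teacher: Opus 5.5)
Your proposal is correct and follows the paper's proof item by item: the same formulas for the finite limits, the same factorising map defined with $\exists x$ in (2), the corestriction to the image in (3), the kernel-pair argument in (4), and the image-factorisation argument in (5). The one variation is in (5). You work with the given coequalised pair $(g,h)$ directly, since $m$ monic implies $e$ also coequalises it, and you only need $m\circ k=\mathrm{id}_Y$. The paper instead first invokes the lemma that a regular epimorphism coequalises its kernel pair and then shows $m$ is an isomorphism, so your route is a slight streamlining that avoids that lemma.
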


\begin{proof}
    \begin{enumerate}
        \item It is routine to check that $\top$ gives the terminal object, and $\wedge$ between two formulas with disjoint variables gives the product. Given a parallel pair of morphisms $f,g:[\phi(x)]\rightarrow[\psi(y)]$, the equaliser is given by the inclusion $[\phi(x)\wedge f(x)=g(x)]$.
        
        \item Let $f:[\phi(x)]\rightarrow[\psi(y)]$ be a definable surjection. If $f$ is to be a regular epimorphism, it must be the coequaliser of its kernel pair $(\pi_1,\pi_2)$. Suppose $g:[\phi(x)]\rightarrow [\chi(z)]$ is a cocone for $(\pi_1,\pi_2)$.
        \[\begin{tikzcd}
        	{[\phi(x_1)\wedge\phi(x_2)\wedge f(x_1)=f(x_2)]} & {[\phi(x)]} & {[\psi(y)]} \\
        	&& {[\chi(z)]} \\
        	&& {}
        	\arrow["{\pi_2}"', shift right, from=1-1, to=1-2]
        	\arrow["{\pi_1}", shift left, from=1-1, to=1-2]
        	\arrow["f", from=1-2, to=1-3]
        	\arrow["g"', from=1-2, to=2-3]
        	\arrow["h", dashed, from=1-3, to=2-3]
        \end{tikzcd}\]
        We define the map $h$ as 
        \[ \psi(y)\wedge\chi(z)\wedge\exists x \ (\phi(x)\wedge f(x)=y\wedge g(x)=z).\]
        This is total, because $f$ is surjective. This is single-valued, because $g$ is a cocone for $(\pi_1,\pi_2)$. It is unique, because every $y$ has a preimage $x$ under $f$, and the commuting condition enforces that $h(y)=g(x)$.

        \item Given a morphism $f:[\phi(x)]\rightarrow[\psi(y)]$, the coequaliser of its kernel pair is given by $f: [\phi(x)]\rightarrow[\psi(y)\wedge\exists x \ (\phi(x)\wedge f(x)=y)]$. The proof of this is similar to above.

        \item Suppose $f:[\phi(x)]\rightarrow[\psi(y)]$ is not a monomorphism, so there exist $g,h:[\chi(z)]\rightrightarrows[\phi(x)]$ such that $g\neq h$ and $fg=fh$. Then, $T$ proves that 
        \[ \exists z \ (\chi(z)\wedge g(z)\neq h(z) \wedge fg(z)=fh(z)).\]
        In particular, $T$ proves that $f$ is not injective.

        Conversely, suppose $f$ is not injective. Then, the following diagram observes that $f$ is not a monomorphism.
        \[\begin{tikzcd}
        	{[\phi(x_1)\wedge\phi(x_2)\wedge f(x_1)=f(x_2)]} & {[\phi(x)]} & {[\psi(y)]}
        	\arrow["{\pi_2}"', shift right, from=1-1, to=1-2]
        	\arrow["{\pi_1}", shift left, from=1-1, to=1-2]
        	\arrow["f", from=1-2, to=1-3]
        \end{tikzcd}\]
                
        \item Let $f:[\phi(x)]\rightarrow[\psi(y)]$ be a regular epimorphism. Examining the proof of Lemma \ref{basics of regular} in \cite{Butz1998}, we see that part (1) only requires $\C$ to have finite limits and coequalisers of kernel pairs. We have already proved this for $\defT(T)$. Hence, $f$ is the coequaliser of its kernel pair $(\pi_1,\pi_2)$. Factorise $f$ as $me$ as below, where $e$ is $f$ with  codomain restricted to its image and $m$ is inclusion. 
        \[\begin{tikzcd}
        	{[\phi(x_1)\wedge\phi(x_2)\wedge f(x_1)=f(x_2)]} & {[\phi(x)]} & {[\psi(y)]} \\
        	&& {[\psi(y)\wedge\exists x \ (\phi(x)\wedge f(x)=y)]} \\
        	&& {}
        	\arrow["{\pi_2}"', shift right, from=1-1, to=1-2]
        	\arrow["{\pi_1}", shift left, from=1-1, to=1-2]
        	\arrow["f", two heads, from=1-2, to=1-3]
        	\arrow["e"{description}, two heads, from=1-2, to=2-3]
        	\arrow["g", shift left, curve={height=-6pt}, dashed, from=1-3, to=2-3]
        	\arrow["m", hook, from=2-3, to=1-3]
        \end{tikzcd}\]
        Note that $e$ is surjective, so $e$ is a regular epimorphism. Also, $m$ is provably injective, so $m$ is a monomorphism. We calculate that $me\pi_1=f\pi_1=f\pi_2=me\pi_2$, so canceling $m$ on the left shows that $e$ is a cocone for $(\pi_1,\pi_2)$. The universality of the coequaliser induces the map $g$ such that $gf=e$. Then, we cancel $f$ on the right from $mgf=f$ to obtain $mg=\textup{id}$. Similarly, cancel $e$ on the right from $gme=e$ to obtain $gm=\textup{id}$.
        
        Since $m$ is an isomorphism and is the inclusion map, we conclude that $m$ is the identity, and $f=e$. Now, $e$ is certainly surjective. 

        \item That a regular epimorphism is stable under pullback is known as substitution. In other words, if $f:[\phi(x)]\rightarrow[\psi(y)]$ is a regular epimorphism and $g:[\chi(z)]\rightarrow[\psi(y)]$ is any morphism with the same codomain, then we require that the projection $[\phi(x)\wedge\chi(z)\wedge f(x)=g(z)]\rightarrow [\chi(z)]$ is surjective. This is certainly true given that $f$ is surjective.
    \end{enumerate}
\end{proof}

\begin{cor}
    The category $\defT(T)$ is regular. The regular epimorphism-monomorphism factorisation of a morphism $f:[\phi(x)]\rightarrow[\psi(y)]$ is given by the diagram below, where $e$ is $f$ with restricted domain and $m$ is the inclusion map.
\[\begin{tikzcd}
	& {[\psi(y)\wedge\exists x \ (\phi(x)\wedge f(x)=y)]} & \\
	{[\phi(x)]} && {[\psi(y)]}
	\arrow["m", hook, from=1-2, to=2-3]
	\arrow["e", two heads, from=2-1, to=1-2]
	\arrow["f"', from=2-1, to=2-3]
\end{tikzcd}\]
\end{cor}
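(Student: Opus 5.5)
Regularity of $\defT(T)$ follows by checking the three clauses of the definition against Proposition~\ref{regular defT}. Finite limits exist by part (1). Kernel pairs have coequalisers by part (3). For stability of regular epimorphisms under pullback, parts (2) and (5) together show that in $\defT(T)$ a morphism is a regular epimorphism exactly when it is provably surjective. It then suffices that provable surjectivity is preserved by pullback, which is part (6). So $\defT(T)$ is regular, and Lemma~\ref{basics of regular} applies.

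For the factorisation, take $f:[\phi(x)]\rightarrow[\psi(y)]$. Let $W=[\psi(y)\wedge\exists x\,(\phi(x)\wedge f(x)=y)]$, let $e:[\phi(x)]\rightarrow W$ be the same graph as $f$ with the codomain restricted, and let $m:W\rightarrow[\psi(y)]$ be the formula $y=y'$ restricted to $W$. We need the following:
\begin{enumerate}
    \item $e$ is well defined, since every $f(x)$ with $\phi(x)$ satisfies the defining formula of $W$.
    \item $e$ is provably surjective by construction, so it is a regular epimorphism by part (2).
    \item $m$ is provably injective, being an inclusion, so it is a monomorphism by part (4).
    \item $me=f$, since both have the same graph.
\end{enumerate}
Uniqueness up to isomorphism of such a factorisation is Lemma~\ref{basics of regular}(3), which now applies because the category is regular.

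There is no real obstacle. The only point needing care is to apply Lemma~\ref{basics of regular} only after regularity has been established, which is why regularity is proved first.
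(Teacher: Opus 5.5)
Your proposal is correct and matches the paper. The corollary is read off from Proposition~\ref{regular defT}: regularity comes from parts (1), (3), (2), (5) and (6), and the factorisation $f=me$, with $e$ provably surjective (hence a regular epimorphism) and $m$ the inclusion (hence a monomorphism), is exactly the one built in the proof of part (5). You also rightly read $e$ as $f$ with its codomain (not domain) restricted, and take uniqueness from Lemma~\ref{basics of regular}(3) once regularity is established.
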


\begin{example}\label{degenerate example}
    There are examples of $T$ with non-regular epimorphisms. By part (2) of Proposition~\ref{regular defT}, they are in particular not surjective. Let $T$ be the theory of the singleton set in the empty language. Then, every definable set is either the empty set or the singleton set, so $\defT(T)$ is equivalent to the category $\mathbbm{2} = \{ \bot\rightarrow \top\}$. The unique arrow from the initial object to the terminal is an epimorphism, but not a regular epimorphism. 
\end{example}

\begin{prop}
    All epimorphisms in $\defT(T)$ are regular if and only if $T$ has at least two distinct definable points in the same sort.
\end{prop}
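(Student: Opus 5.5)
The plan is to reduce both directions to surjectivity, using parts (2) and (5) of Proposition~\ref{regular defT}: a morphism of $\defT(T)$ is a regular epimorphism if and only if it is provably surjective. So the statement says that every epimorphism is surjective exactly when $T$ has two distinct definable points in some sort.

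\emph{($\Leftarrow$).} Suppose $a\neq b$ are $0$-definable points of the same sort $S$. Let $f:[\phi(x)]\rightarrow[\psi(y)]$ be an epimorphism, and let
\[\chi(y) := \psi(y)\wedge\exists x\,(\phi(x)\wedge f(x)=y)\]
be its image, as in the corollary above. Assume for contradiction that $f$ is not surjective. Then $\psi(y)\wedge\neg\chi(y)$ is realised in $\U$, because $T$ is complete. Define two morphisms $g,h:[\psi(y)]\rightarrow[z=z]$, with $z$ of sort $S$, as follows.
\begin{itemize}
\item $g$ is the constant map with value $a$.
\item $h$ sends $y$ to $a$ if $\chi(y)$ holds, and to $b$ if $\neg\chi(y)$ holds.
\end{itemize}
Both are definable because $a$ and $b$ are $0$-definable and negation is available in first-order logic. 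Since $f$ lands in $\chi$, we get $gf=hf$. But $g\neq h$, since they differ on a realisation of $\psi\wedge\neg\chi$. This contradicts that $f$ is an epimorphism. Hence $f$ is surjective, and by part (2) it is a regular epimorphism.

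\emph{($\Rightarrow$).} Suppose $T$ has no two distinct definable points in any single sort. The candidate non-regular epimorphism is the unique map $[\bot]\rightarrow[\top]$ in the empty sort, which generalises Example~\ref{degenerate example}.
\begin{itemize}
\item \emph{It is an epimorphism.} A morphism $[\top]\rightarrow[\psi(y)]$ is exactly a $0$-definable tuple realising $\psi$. Two distinct such tuples would differ in some coordinate, giving two distinct definable points of one sort, which we have excluded. So each object admits at most one map from $[\top]$. Any two maps out of $[\top]$ therefore coincide, so $[\bot]\rightarrow[\top]$ is an epimorphism.
\item \emph{It is not regular.} It is not surjective, since $[\top]=\{\ast\}$ is nonempty. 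By part (5) it is therefore not a regular epimorphism.
\end{itemize}

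The main point needing care is this reduction from tuples to single sorts: "same sort" should be read coordinatewise, so that a pair of distinct definable tuples yields a pair of distinct definable elements of a single sort. The remaining steps are routine checks of definability.
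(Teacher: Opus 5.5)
Your proposal is correct and follows the paper's proof essentially verbatim: for ($\Leftarrow$) it compares the constant map $g$ with the piecewise map $h$ into the sort of $a,b$, and for ($\Rightarrow$) it uses $[\bot]\rightarrow[\top]$, argued contrapositively, which is equivalent to the paper's direct phrasing. Your coordinatewise reduction, from two distinct definable tuples to two distinct definable elements of a single sort, fills in a step the paper glosses over when it says a parallel pair $\top\rightrightarrows X$ gives ``definable points of sort $X$''.
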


\begin{proof}
    $(\Rightarrow)$ \quad Suppose all epimorphisms are regular. In particular, since the unique morphism $!:\bot \rightarrow \top$ is not a regular epimorphism, it is not an epimorphism. Hence, there is a parallel pair $a,b:\top\rightrightarrows X$ such that $a\neq b$. These are precisely definable points of sort $X$.

\smallskip

    $(\Leftarrow)$ \quad Suppose $T$ has two distinct definable points $a$ and $b$ of the sort $S$. Suppose a definable function $f:[\phi(x)]\rightarrow[\psi(y)]$ is not a regular epimorphism, so in particular not surjective. Then, we can define two parallel maps $g,h:[\psi(y)]\rightrightarrows S$. The map $g$ sends everything to $a$. The map $h$ sends elements in the image of $f$ to $a$ and the rest to $b$. These maps show that $f$ is not an epimorphism. 
\end{proof}

\medskip

\subsection{The eq construction}
We recall a few basic facts about elimination of imaginaries and the eq construction. See \cite[\S8.4]{alma9932857994401631} for reference.

\begin{defin}
\begin{enumerate}
    \item     We say a theory $T$ has \textit{(uniform) elimination of imaginaries} if definable equivalence relations have definable quotients: given a definable equivalence relation $E(x,y)$ on $\U^n$ for any $n\in\omega$, there is a definable set $\U^n/E$ with a definable function $f:\U^n\rightarrow \U^n/E$, such that $f(x)=f(y) \Leftrightarrow E(x,y)$ for all $x,y\in \U^n$.
    \item     Let $\mathcal{U}^{eq}$ be the following expansion of $\mathcal{U}$. For every definable equivalence relation $E$, there is a new sort $S_E$, whose interpretation is the equivalence classes of $E$. There is a new function symbol $\pi_E$ from the home sort $\mathcal{U}$ to $S_E$, which is interpreted by the quotient map of $E$. Let $T^{eq}$ be the the theory of $\mathcal{U}^{eq}$.
\end{enumerate}
\end{defin}

The definition of elimination of imaginaries can be equivalently stated with definable equivalence relations $E$ over any definable set $X$. Indeed, given a definable equivalence relation $E$ over $X$, one can extend $E$ to an equivalence relation over $\mathcal{U}^n$, given by $E'(x,y) := E(x,y) \vee x=y$. Then, the quotient $X/E$ is recovered as the image of the map $X\hookrightarrow\mathcal{U}\rightarrow\mathcal{U}/E'$.

\begin{prop}[{\cite[Proposition 8.4.5]{alma9932857994401631}}]
\begin{enumerate}
    \item A subset of the home sort is definable in $T^{eq}$ if and only if it is definable in $T$. 
    \item The theory $T^{eq}$ eliminates imaginaries.
\end{enumerate}
\end{prop}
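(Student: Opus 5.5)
The plan is to work directly in the monster model $\U^{eq}$ and use a normal form for formulas of $T^{eq}$, in which every variable of an imaginary sort is replaced by a home-sort variable composed with a quotient map.

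\textbf{Part (1).} One direction is immediate: every $T$-formula is a $T^{eq}$-formula, since $\U^{eq}$ expands $\U$ and the home sort is interpreted in the same way.

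For the converse, take a $T^{eq}$-formula $\phi(x)$ with $x$ in the home sort. I would show, by induction on $\phi$, that every $T^{eq}$-formula $\phi(x, y_1,\dots,y_k)$ with $y_i$ of sort $S_{E_i}$ is equivalent in $\U^{eq}$ to one of the form
\[\exists z_1\cdots\exists z_k\ \Big(\bigwedge_i \pi_{E_i}(z_i)=y_i \wedge \psi(x,z_1,\dots,z_k)\Big)\]
with $\psi$ an $\Lang$-formula. Equivalently, $\phi$ pulls back along the surjections $\pi_{E_i}$ to a formula $\psi(x,\bar z)$ that is $E_i$-invariant in each $z_i$. The inductive steps are as follows.
\begin{itemize}
\item \emph{Atomic formulas.} The only terms of imaginary sort are of the form $\pi_E(t)$ with $t$ an $\Lang$-term, so an atomic formula such as $\pi_E(t)=\pi_E(t')$ translates to $E(t,t')$.
\item \emph{Boolean connectives.} These commute with the translation, because invariance is preserved under negation and conjunction.
\item \emph{Quantifiers.} A quantifier over $S_E$ becomes a quantifier over the home sort, because $\pi_E$ is surjective.
\end{itemize}
Taking $k=0$, the subset defined by $\phi(x)$ is $\Lang$-definable.

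\textbf{Part (2).} Let $E$ be a $T^{eq}$-definable equivalence relation on a product of sorts $S_{E_1}\times\dots\times S_{E_k}$; home sorts count as $S_=$. Pull $E$ back along $\pi=\pi_{E_1}\times\dots\times\pi_{E_k}$ to obtain
\[E'(\bar z,\bar z') :\Leftrightarrow E(\pi(\bar z),\pi(\bar z')).\]
By the normal form from part (1), $E'$ is $\Lang$-definable, and it is an equivalence relation on a power of the home sort. There is therefore a sort $S_{E'}$, and I define
\[f(\bar y) = \pi_{E'}(\bar z) \quad\text{for any } \bar z \text{ with } \pi(\bar z)=\bar y.\]
This is well defined because $E'$ is invariant under each $E_i$. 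It is $T^{eq}$-definable via $\exists \bar z\,(\pi(\bar z)=\bar y\wedge w=\pi_{E'}(\bar z))$, and $f(\bar y)=f(\bar y')$ holds if and only if $E(\bar y,\bar y')$.

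If the definition requires quotients only of relations on a single sort, I would first code the finite product of sorts as one sort: take the product of the $E_i$ as an equivalence relation on the concatenated home-sort tuple and use its $\pi$. The remark after the definition then handles relations defined only on a definable subset.

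\textbf{Main obstacle.} The one genuinely non-trivial step is the normal-form induction in part (1), specifically the quantifier case and the check that invariance is maintained throughout. Part (2) is bookkeeping once part (1) is in place.
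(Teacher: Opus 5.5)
Your proof is correct. The normal-form translation of $T^{eq}$-formulas into $\Lang$-formulas, with surjectivity of the $\pi_E$ handling the quantifier step, gives (1); pulling $E$ back to an $\Lang$-definable $E'$ on the home sort and composing with $\pi_{E'}$ gives (2). The paper gives no proof of its own and only cites the textbook, and your argument is the standard one behind that cited result.
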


\medskip

\subsection{The exact completion}
We recall a few facts about exact categories and how a regular category can be completed to an exact category freely. See \cite{CarboniA.1998Raec, LackStephen1999Anot} for details.

\begin{defin}
    A category is \textit{(Barr-)exact} if it is regular and every internal equivalence relation $E\rightrightarrows X$ has a coequaliser $X\rightarrow X/E$, whose kernel pair is the equivalence relation $E\cong X\times_{X/E}X$.
\end{defin}

\begin{prop}\label{EI iff exact}
    The theory $T$ eliminates imaginaries if and only if $\defT(T)$ is exact.
\end{prop}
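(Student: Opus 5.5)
Both directions reduce to a dictionary between internal equivalence relations in $\defT(T)$ and definable equivalence relations on definable sets. Combined with Proposition~\ref{regular defT}, which says that $\defT(T)$ is regular, that regular epimorphisms are the provably surjective maps, and that monomorphisms are the provably injective maps, the claim then follows.

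First, an internal equivalence relation $E\rightrightarrows X$ is a jointly monic pair, hence a subobject of $X\times X$. By Proposition~\ref{regular defT}(4), I will identify it with a definable subset $[\epsilon(x,x')]\subseteq[\phi(x)\wedge\phi(x')]$. I will then check that reflexivity, symmetry and transitivity, phrased internally through factorisations through the diagonal, the twist map and the pullback $E\times_X E$, translate into $T$ proving that $\epsilon$ is an equivalence relation on $\phi$. Conversely, every definable equivalence relation on a definable set yields an internal one by the same translation. Second, for a definable surjection $f:[\phi(x)]\rightarrow[\psi(y)]$, the kernel pair is $[\phi(x_1)\wedge\phi(x_2)\wedge f(x_1)=f(x_2)]$, as computed in the proof of Proposition~\ref{regular defT}(2).

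$(\Rightarrow)$ Suppose $T$ eliminates imaginaries, and let $E$ be an internal equivalence relation on $[\phi(x)]$. Using the remark after the definition, I extend $E$ to $E'$ on $\U^n$ and obtain a definable $f$ with $f(x)=f(y)\leftrightarrow E(x,y)$ on $\phi$. Corestricting $f$ to its image makes it provably surjective, hence a regular epimorphism. A regular epimorphism is the coequaliser of its kernel pair (Lemma~\ref{basics of regular}(1)), and here the kernel pair is exactly $E$. So $E$ has a coequaliser whose kernel pair is $E$, and $\defT(T)$ is exact.

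$(\Leftarrow)$ Suppose $\defT(T)$ is exact, and let $E(x,y)$ be a definable equivalence relation on $\U^n$. This gives an internal equivalence relation on $[x=x]$, so there is a coequaliser $q:[x=x]\rightarrow Q$ with kernel pair isomorphic to $E$. Here $Q=[\psi(y)]$ is some definable set, and $q$ is definable. Reading the kernel pair condition through the description above gives $q(x)=q(x')\leftrightarrow E(x,x')$, which is uniform elimination of imaginaries.

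The main obstacle is the bookkeeping in the first step. One must verify that the internal axioms of an equivalence relation correspond exactly to $T$ proving the first-order axioms, and that "the kernel pair is isomorphic to $E$" means equality of the subobjects $\{(x,x'):q(x)=q(x')\}$ and $\epsilon$, not merely an abstract isomorphism. I will handle this by noting that both are monomorphisms into $[\phi]\times[\phi]$ compatible with the projections, so an isomorphism over $[\phi]\times[\phi]$ forces equality of the corresponding definable subsets.
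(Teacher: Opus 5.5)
Your proposal is correct and follows essentially the same route as the paper. In both directions you pass between internal equivalence relations and definable ones, take the definable quotient (respectively the coequaliser), and identify its kernel pair with $E$ through the formula $f(x_1)=f(x_2)$. There are two small differences. In $(\Rightarrow)$ the paper writes down the factorising maps for the coequaliser and kernel-pair universal properties by explicit formulas, whereas you get them from Proposition~\ref{regular defT}(2) and Lemma~\ref{basics of regular}(1) after corestricting $f$ to its image. You also spell out the translation of the internal reflexivity, symmetry and transitivity axioms, which the paper leaves implicit.
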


\begin{proof}
    $(\Rightarrow)$  \quad Suppose $T$ eliminates imaginaries. Let $\pi_1,\pi_2:E\rightrightarrows X$ be an internal equivalence relation in $\defT(T)$. Then, the relation $E'$ defined by
    \[x_1,x_2\in X \wedge (\exists x'_1,x'_2 \ E(x'_1,x'_2)\wedge \pi_1(x'_1)=x_1\wedge \pi_2(x'_2)=x_2)\]
    is an equivalence relation on $X$. Let $f:X\rightarrow Y$ be its definable quotient. We prove that this is the coequaliser of $E\rightrightarrows X$ and that $E$ is the kernel pair.

    Given a cocone $g:X\rightarrow W$ for the parallel pair $(\pi_1,\pi_2)$, the unique factorising map $Y\rightarrow W$ is given by 
    \[ \exists x\in X \ (f(x)=y\wedge g(x)=w).\]
    Given a cone $p_1,p_2:V\rightrightarrows X$ such that $fp_1=fp_2$, the unique factorising map $V\rightarrow E$ is given by
    \[ p_1(v)=\pi_1(e)\wedge p_2(v)=\pi_2(e). \]

\smallskip

    $(\Leftarrow)$  \quad Suppose $\defT(T)$ is exact. Let $E$ be a definable equivalence relation over $X$. Then the projections $\pi_1,\pi_2: E\rightrightarrows X$ form an internal equivalence relation in $\defT(T)$. Let $f:X\rightarrow X/E$ be its coequaliser. This gives the definable quotient, since $E$ is isomorphic to the kernel pair of $f$, which is given by the projections $[x_1,x_2\in X \wedge f(x_1)=f(x_2)]\rightrightarrows X$. 
\end{proof}

\begin{defin}
The \textit{exact completion} of a regular category $\mathcal{C}$, denoted as $\mathcal{C}_{ex/reg}$, is the exact category equipped with a regular embedding $\mathcal{C}\rightarrow\mathcal{C}_{ex/reg}$, which induces for each exact category $\mathcal{D}$ an equivalence 
\[\textup{Reg}(\mathcal{C},\mathcal{D})\simeq\textup{Reg}(\mathcal{C}_{ex/reg},\mathcal{D}).\]
\end{defin}

That the exact completion exists and its explicit constructions can be found in \cite{CarboniA.1998Raec} and \cite{LackStephen1999Anot}. We recall the description in \cite{CarboniA.1998Raec}. Within a regular category, one can define a calculus of relations. Relations from $X$ to $Y$ are subobjects of $X\times Y$, so are ordered by the usual subobject order. The composition of relations can be described as a combination of pullbacks and projections, but it is perhaps more useful to describe it as follows. If $R(x,y)$ and $S(y,z)$ are relations, their composition $SR(x,z)$ is defined as $\exists y \ R(x,y) \wedge S(y,z)$. Moreover, any relation $R(x,y)$ has an involution $R^\circ(y,x)$. This is defined by $R^\circ(y,x)\leftrightarrow R(x,y)$. 

\begin{defin}[{\cite[Definition 11, Proposition 12]{CarboniA.1998Raec}}]\label{ex/reg carboni}
    Let $\C$ be a regular category. The exact completion of $\C$ can be described as follows:
    \begin{itemize}
        \item An object of $\C_{ex/reg}$ is an equivalence relation $E\hookrightarrow X\times X$ in $\C$;
        \item A morphism $R: (X,D) \rightarrow (Y,E)$ is a relation $R\hookrightarrow X\times Y$ satisfying
            \[ RD = R = ER \]
        and 
            \[ D \leq R^\circ R \textup{ and } RR^\circ \leq E. \]
    \end{itemize}
\end{defin}

\begin{thm}\label{Teq is exact}
    There is an equivalence of categories $\defT(T^{eq}) \simeq \defT(T)_{ex/reg}$.
\end{thm}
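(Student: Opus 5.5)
The plan is to build an explicit comparison functor from the Carboni–Vitale description of $\defT(T)_{ex/reg}$ (Definition~\ref{ex/reg carboni}) to $\defT(T^{eq})$, and then show it is fully faithful and essentially surjective.

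\textbf{The functor $\Phi$.} On objects, send $(X,E)$, where $E$ is a definable equivalence relation on a definable set $X$, to the quotient $X/E$ in $\mathcal{U}^{eq}$. Concretely, as in the remark after the definition of elimination of imaginaries, extend $E$ to $E'(x,y):=E(x,y)\vee x=y$ on the ambient $\mathcal{U}^n$. Then take the image of $X$ under $\pi_{E'}$, which is a $0$-definable subset of the sort $S_{E'}$.

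On morphisms, send a relation $R:(X,D)\to(Y,E)$ to
\[\Phi R=\{(\pi_{D'}(x),\pi_{E'}(y)) : R(x,y)\},\]
which is definable in $T^{eq}$. The four axioms translate exactly into this being the graph of a function $X/D\to Y/E$:
\begin{itemize}
\item $RD=R=ER$ says $R$ is a union of $D\times E$-blocks, so $\Phi R$ is well defined on classes;
\item $D\le R^\circ R$ gives totality;
\item $RR^\circ\le E$ gives single-valuedness.
\end{itemize}
Relational composition $SR(x,z)=\exists y\,(R(x,y)\wedge S(y,z))$ goes to composition of functions, and the identity on $(X,D)$ is the relation $D$, which goes to the identity. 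So $\Phi$ is a functor.

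\textbf{Fully faithful.} Faithfulness holds because a relation $R$ with $RD=R=ER$ is recovered from $\Phi R$ as $R(x,y)\leftrightarrow(\pi_{D'}x,\pi_{E'}y)\in\Phi R$. For fullness, take any $T^{eq}$-definable function $g:X/D\to Y/E$ and set
\[R(x,y):\leftrightarrow x\in X\wedge y\in Y\wedge g(\pi_{D'}x)=\pi_{E'}y.\]
This is a $T^{eq}$-definable subset of the home sort, so by \cite[Proposition 8.4.5]{alma9932857994401631}(1) it is already $T$-definable. It satisfies the four axioms because $g$ is a function on classes, and $\Phi R=g$.

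\textbf{Essentially surjective.} This is the step I expect to be the main obstacle, because it needs a normal form for arbitrary objects of $\defT(T^{eq})$. An object is a $0$-definable $Z\subseteq S_{E_1}\times\dots\times S_{E_k}$. Home sorts are treated as $S_{=}$, and the empty product gives the two objects over $\mathcal{U}^0$.

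Let $\pi=\pi_{E_1}\times\dots\times\pi_{E_k}$, let $X=\pi^{-1}(Z)$, and let $E$ be the restriction to $X$ of the componentwise relation $\bigwedge_i E_i(x_i,y_i)$. Then $X$ and $E$ are $T^{eq}$-definable subsets of the home sort, hence $T$-definable by Proposition 8.4.5(1) again. Moreover $\pi|_X$ is a definable bijection from $\Phi(X,E)=X/E$ onto $Z$, since it is surjective because each $\pi_{E_i}$ is surjective.

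The care needed is in matching $\Phi(X,E)$, which lives in the single sort $S_{E'}$, with $Z$, which lives in a product of sorts. I would handle this by writing the isomorphism $\pi_{E'}(x)\mapsto\pi(x)$ as a $T^{eq}$-formula and checking that it is well defined and bijective, both of which follow directly from the definition of $E$. Degenerate cases, such as empty $X$ or the sort $\mathcal{U}^0$, go through verbatim, so no non-degeneracy hypothesis is needed. This fits the remark in the introduction.

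Combining the three parts, $\Phi$ is an equivalence $\defT(T)_{ex/reg}\simeq\defT(T^{eq})$.
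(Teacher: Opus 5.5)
Your proof is correct, but it takes a different route from the paper's.

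The paper never writes the comparison functor down by hand. It uses three ingredients:
\begin{itemize}
\item the inclusion $\defT(T)\hookrightarrow\defT(T^{eq})$ is regular (Lemma~\ref{F defT defTeq is regular});
\item $\defT(T^{eq})$ is exact (Proposition~\ref{EI iff exact}, since $T^{eq}$ eliminates imaginaries);
\item the universal property of $\defT(T)_{ex/reg}$, which then produces a regular functor $\overline{F}$.
\end{itemize}
Only afterwards does it work out what $\overline{F}$ does. It uses preservation of coequalisers of kernel pairs to compute $\overline{F}(\langle X,R\rangle)=\pi_{R'}(X)$ and in the essential-surjectivity step. It uses preservation of a pullback to show $\overline{F}(\theta)=\tilde\theta$ in the fullness step.

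You instead define $\Phi$ explicitly on the Carboni--Vitale presentation and check functoriality by hand: relational composition goes to composition of functions, and the identity $D$ goes to the identity. This makes fullness and faithfulness transparent. It needs nothing beyond Definition~\ref{ex/reg carboni} and the fact that home-sort sets definable in $T^{eq}$ are already $T$-definable. You also get exactness of $\defT(T^{eq})$ as a consequence rather than as an input.

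Your essential-surjectivity argument is also more complete than the paper's. The paper only treats objects whose variables lie in a single sort $\U^n/E$. You handle arbitrary definable subsets of products $S_{E_1}\times\dots\times S_{E_k}$, including home sorts.

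What the paper's route buys is that its equivalence is, by construction, the canonical regular functor extending the inclusion. This is what the diagram of induced functors at the end of the paper relies on. You can recover the same for your $\Phi$:
\begin{itemize}
\item $\Phi(X,\Delta_X)=\pi_{=}(X)\cong X$, naturally in morphisms $f$ of $\defT(T)$ (sent to their graphs);
\item an equivalence of categories is automatically regular.
\end{itemize}
So $\Phi$ agrees with $\overline{F}$ up to isomorphism by the uniqueness part of the universal property.
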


By Proposition \ref{EI iff exact}, $\defT(T^{eq})$ is exact. We use the following lemma.

\begin{lem}\label{F defT defTeq is regular}
        The inclusion functor $F: \defT(T) \hookrightarrow \defT(T^{eq})$ is a regular functor.
\end{lem}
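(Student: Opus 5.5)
We have to show that the inclusion $F:\defT(T)\hookrightarrow\defT(T^{eq})$ preserves finite limits and regular epimorphisms. We first make the inclusion precise. An object $[\phi(x)]$ of $\defT(T)$, with $x$ in the home sort, is sent to the same formula read in $T^{eq}$. A definable function is sent to the same graph. Since $\U$ is the reduct of $\U^{eq}$ to the home sort, each formula of $T$ has the same interpretation in $\U$ and in $\U^{eq}$. Hence $F$ is a well-defined functor, and it is fully faithful by part (1) of the proposition on $T^{eq}$ quoted above: a $T^{eq}$-definable subset of a product of home sorts is already $T$-definable.

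For finite limits, we use the explicit constructions from Proposition~\ref{regular defT}(1), which apply verbatim in $\defT(T^{eq})$ because $T^{eq}$ is also a complete first-order theory. The terminal object is $[\top]$ in both categories. The product of $[\phi(x)]$ and $[\psi(y)]$ is $[\phi(x)\wedge\psi(y)]$ with its projections. The equaliser of $f,g$ is the inclusion $[\phi(x)\wedge f(x)=g(x)]$. Each of these is given by the same formula on both sides, and $F$ acts as the identity on formulas. So $F$ sends the chosen limit cones of $\defT(T)$ to limit cones of $\defT(T^{eq})$, and therefore preserves all finite limits up to isomorphism.

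For regular epimorphisms, we combine parts (2) and (5) of Proposition~\ref{regular defT}. In both $\defT(T)$ and $\defT(T^{eq})$, a morphism is a regular epimorphism exactly when it is provably surjective. If $f:[\phi(x)]\to[\psi(y)]$ is a regular epimorphism in $\defT(T)$, then $T\vdash\forall y\,(\psi(y)\to\exists x\,(\phi(x)\wedge f(x)=y))$. This sentence lies in $T^{eq}$, since $\U^{eq}$ expands $\U$ and leaves the home sort unchanged. Hence $F(f)$ is surjective in $\U^{eq}$, and so it is a regular epimorphism of $\defT(T^{eq})$.

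The only step that needs any care is confirming that Proposition~\ref{regular defT} applies to the multi-sorted theory $T^{eq}$, including the characterisation of regular epimorphisms. Its proof never uses single-sortedness, so this is routine. Nothing else poses a real obstacle.
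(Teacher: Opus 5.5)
Your proposal is correct and follows essentially the same route as the paper. Finite limits are preserved because the terminal object, products and equalisers are given by the same formulas ($\top$, $\wedge$, $f(x)=g(x)$) in both categories, and regular epimorphisms are preserved because they are exactly the provably surjective maps and $T^{eq}$ proves every sentence of $T$. The paper mentions only $\top$ and $\wedge$; your explicit treatment of equalisers fills that in, and your remark on full faithfulness is not needed for regularity but is harmless.
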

    \begin{proof}
        It is clear that $F$ preserves finite limits, since it preserves $\top$ and $\wedge$. 

        Now, if $f$ is a regular epimorphism in $\defT(T)$, then $T$ proves that $f$ is a surjection. $T^{eq}$ extends $T$, so $T^{eq}$ also proves that $f$ is a surjection. Hence, $f$, or more precisely, $F(f)$, is a regular epimorphism in $\defT(T^{eq})$.
    \end{proof}

\begin{proof}[Proof of Theorem \ref{Teq is exact}]
By the universal property of $\defT(T)_{ex/reg}$, the inclusion functor $F$ from Lemma \ref{F defT defTeq is regular} induces a regular functor $\overline{F}: \defT(T)_{ex/reg} \rightarrow \defT(T^{eq})$. We prove that $\overline{F}$ is fully faithful and essentially surjective.

\smallskip

Full: \quad In $T$, let $R,S$ be equivalence relation over $X,Y$ respectively, so that $\langle X,R \rangle $ and $\langle Y,S \rangle$ are objects of $\defT(T)_{ex/reg}$. Let $R'$ be any definable equivalence relation over $\U^n$ extending $R$, and likewise for $S'$. Recall that $\pi_{R'}: \U^n\rightarrow \U^n/R'$ is a function symbol in $T^{eq}$, and that the image $\pi_{R'}(X)$ of $X$ under this function is the definable quotient of $R$. As $\overline{F}$ is regular, $F(\langle X,R \rangle) = \pi_{R'}(X)$ and $F(\langle Y,S \rangle) = \pi_{S'}(Y)$.

Let $\tilde{\theta}(\tilde{x},\tilde{y})$ be a morphism from $\pi_{R'}(X)$ to $\pi_{S'}(Y)$. Define $\theta(x,y)$ on the home sorts by $\tilde{\theta}(\pi_{R'}(x),\pi_{S'}(y))$. Note as $\theta$ is defined on the home sorts, it is a relation on $X$ and $Y$ in $\defT(T)$. We check that $\theta$ is in fact a morphism in $\defT(T)_{ex/reg}$ from $\langle X,R\rangle$ to $\langle Y,S\rangle$.

Suppose $a,b\in X$ and $c\in Y$ satisfy $R(a,b)$ and $\theta(b,c)$. Then, $\pi_{R'}(a)=\pi_{R'}(b)$ and $\tilde{\theta}(\pi_{R'}(b),\pi_{S'}(c))$ hold, so that $\theta(a,c)$. Hence, we have $\theta R \subseteq \theta$. Conversely, the reflexivity of $R$ implies $\theta\subseteq\theta R$. Similarly, one can prove $\theta = S\theta$.

Suppose $R(a,b)$ holds. Let $c\in Y$ be such that $\tilde\theta (\pi_{R'}(a))=\pi_{S'}(c)$. Then, we have $\theta(a,c)$. Since $\pi_{R'}(a)=\pi_{R'}(b)$, we also have $\theta(b,c)$. Hence, $R\leq\theta^\circ\theta$. Similarly, we have $\theta\theta^\circ\leq S$.

Moreover, $\theta$ considered as an object is the pullback of $\theta R: \langle X,1_X\rangle \rightarrow \langle Y,S\rangle$ and $S:\langle Y,1_Y\rangle \rightarrow \langle Y,S\rangle$. This pullback is preserved by $\overline{F}$, so $\theta(x,y)$ holds if and only if $\overline{F}(\theta)(\pi_R(x)) = \pi_S(y)$. Hence, $\overline{F}(\theta) = \tilde{\theta}$.

\smallskip

Faithful: \quad Let $\theta,\mu: \langle X,R\rangle \rightrightarrows \langle Y,S\rangle$ be two morphisms in $\defT(T)_{ex/reg}$. Then, $\theta,\mu$ are relations on definable subsets of the home sort, so are objects in $\defT(T)$. Since $\overline{F}$ is regular, $\overline{F}(\theta)=\overline{F}(\mu)$ as morphisms if and only if $\overline{F}(\theta)=F(\theta)=F(\mu)=\overline{F}(\mu)$ as objects. Since $T$ and $T^{eq}$ prove the same sentences on the home sort, this occurs if and only if $\theta=\mu$.

\smallskip

Essentially surjective: \quad Let $[\phi(\tilde{x})] \in \defT(T^{eq})$, with the tuple $\tilde{x}$ having sort $\U^n/E$. Define $\psi(x)$ as $ \phi(\pi_{E}(x))$. This is a formula in the home sort, so $[\psi(x)]\in\defT(T)$. Let $E'$ be the equivalence relation of $E$ restricted to $\psi(x)$, so that $\langle\psi(x),E'\rangle\in\defT(T)_{ex/reg}$. Since $\pi_E\upharpoonright_{\psi(x)}:[\psi(x)]\rightarrow[\phi(\tilde x)]$ is the coequaliser of its kernel pair $E'\rightrightarrows[\psi(x)]$, the regularity of $\overline{F}$ implies $\overline{F}(\langle \psi(x), E'\rangle) \cong [\phi(\tilde x)]$.

\end{proof}

The exact completion is intimately related to the pretopos completion. Indeed, Makkai and Reyes \cite{alma992977584040101631} mainly worked on pretoposes instead of exact categories. For the rest of this section, we will study when the pretopos completion and the exact completion of $\defT(T)$ coincide. For a more general approach, see \cite{ShulmanMichael2012Ecas}.

\begin{defin}
\begin{enumerate}

    \item A \textit{coherent} category is a regular category, such that its subojects have finite joins $\vee$ (and thus $\textup{Sub} X$ is an upper semilattice for any $X\in\C$), and for any morphism $f:X\rightarrow Y$, the map $f^{-1}:\textup{Sub} Y \rightarrow \textup{Sub} X$ is a homomorphism of upper semilattices.
    \item Let $\C$ and $\mathcal{D}$ be coherent categories. A functor $F:\C\rightarrow\mathcal{D}$ is \emph{coherent} if it is regular and preserves finite joins. We write $\textup{Coh}(\C,\mathcal{D})$ for the full subcategory of $[\C,\mathcal{D}]$ spanned by the coherent functors.
    \item A coherent category $\C$ is a \textit{pretopos} if it is exact, and has finite disjoint coproducts stable under pullback.
    \item The \textit{pretopos completion} of a coherent category $\mathcal{C}$, denoted as $\overline{\mathcal{C}}$, is the pretopos with a coherent embedding $\mathcal{C}\rightarrow\overline{\mathcal{C}}$, inducing for each pretopos $\mathcal{D}$ an equivalence 
    \[\textup{Coh}(\mathcal{C},\mathcal{D}) \simeq \textup{Coh}(\overline{\mathcal{C}},\mathcal{D}).\]
\end{enumerate}
\end{defin}

Note that $\defT(T)$ is coherent, where joins are given by disjunctions. Since $\overline{\defT(T)}$ is exact, there is a natural map $\defT(T)_{ex/reg}\rightarrow\overline{\defT(T)}$. We investigate when this functor is an equivalence of categories.

\begin{example}\label{degenerate example2}
    Recall Example \ref{degenerate example}, where $T$ is the theory of a singleton set in the empty language. All definable equivalence relations are trivial, so in fact $T$ eliminates imaginaries and $\defT(T)\simeq\defT(T)_{ex/reg}$ is exact. However, every definable set is either empty or a singleton set, so $T$ does not define the disjoint sum of $\{\ast\}$ with itself, which has two elements. Therefore, $\defT(T)$ is not even a pretopos, let alone the pretopos completion.
\end{example}

It turns out this is the only counterexample. We use the following theorem. An object is \textit{well-supported} if the unique arrow into the terminal is a regular epimorphism. An object $D$ is \textit{decidable} if the diagonal $\Delta_D:D\rightarrow D\times D$ has a complement.

\begin{thm}[{\cite[Proposition 5.1]{EmmeneggerJacopo2020Edac}}]\label{EPR}
    Let $\mathcal{C}$ be coherent. Suppose also that 
    \begin{enumerate}
        \item For all $A\in\mathcal{C}$, there is a well-supported object $B$ and a monomorphism $m:A\hookrightarrow B$.
        \item There exists a decidable object $D$ such that the complement of the diagonal $\Delta_D:D\rightarrow D\times D$ is well-supported.
    \end{enumerate}
    Then, the functor $\mathcal{C}_{ex/reg}\rightarrow\overline{\mathcal{C}}$ is an equivalence of categories.
\end{thm}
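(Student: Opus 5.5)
The plan is to show directly that $\C_{ex/reg}$ is itself a pretopos and that the embedding $\C\rightarrow\C_{ex/reg}$ is coherent. It will then follow that $\C_{ex/reg}$ has the universal property defining $\overline{\C}$. Concretely, once both facts are known, for any pretopos $\mathcal{D}$ the equivalence $\textup{Reg}(\C,\mathcal{D})\simeq\textup{Reg}(\C_{ex/reg},\mathcal{D})$ should restrict to an equivalence $\textup{Coh}(\C,\mathcal{D})\simeq\textup{Coh}(\C_{ex/reg},\mathcal{D})$. The reason is that every object of $\C_{ex/reg}$ is a regular quotient of an object of $\C$. Hence every subobject in $\C_{ex/reg}$ is the image of a subobject of an object of $\C$. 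Since regular functors preserve images, a regular functor out of $\C_{ex/reg}$ preserves finite joins as soon as its restriction to $\C$ does. The comparison functor $\C_{ex/reg}\rightarrow\overline{\C}$ is then an equivalence by uniqueness of universal objects.

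The first step is to verify that $\C_{ex/reg}$ is coherent and that $\C\rightarrow\C_{ex/reg}$ preserves finite unions. Using the Carboni--Vitale description in Definition~\ref{ex/reg carboni}, a subobject of $(X,E)$ corresponds to an $E$-saturated subobject of $X$ in $\C$. A join of saturated subobjects computed in $\C$ is again saturated, because $E\circ(S\vee S')=ES\vee ES'$ by coherence of $\C$. Pullback stability of these joins reduces to pullback stability in $\C$. This is routine but should be checked carefully.

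The main step is the construction of finite disjoint coproducts, where hypotheses (1) and (2) enter. The initial object is the bottom subobject of $1$, and it is strict by coherence. For binary coproducts, first consider objects $A,B$ of $\C$. Choose monomorphisms $A\hookrightarrow A'$ and $B\hookrightarrow B'$ into well-supported objects. Let $N\hookrightarrow D\times D$ be the complement of the diagonal. Inside $A'\times B'\times D\times D$, form the subobject
\[U=(A\times B'\times\Delta_D)\vee(A'\times B\times N).\]
The two pieces are disjoint because $\Delta_D\wedge N=0$. On $U$, define an equivalence relation that identifies elements of the first piece with the same $A$-coordinate and elements of the second piece with the same $B$-coordinate. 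Take its quotient $Q$ in $\C_{ex/reg}$.

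The projection $A\times B'\times\Delta_D\rightarrow A$ is a regular epimorphism. This is because $B'\times D$ is well-supported, and $D$ is well-supported since $N$ is well-supported and maps onto $D$. Similarly, $A'\times B\times N\rightarrow B$ is a regular epimorphism. Hence the images of the two pieces in $Q$ are isomorphic to $A$ and $B$. They are disjoint, and their union is $Q$. In a coherent exact category, a union of two disjoint subobjects is their coproduct, and that coproduct is stable under pullback since unions are. So $Q\cong A+B$.

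For general objects $(X,E)$ and $(Y,F)$ of $\C_{ex/reg}$, I would form $X+Y$ as above and quotient by the equivalence relation $E+F$. Checking that this equivalence relation is again an object of $\C_{ex/reg}$ with the right properties uses disjointness and stability. I expect the hardest part to be verifying that the equivalence relation on $U$ is genuinely a subobject of $U\times U$ in $\C$ and is an equivalence relation, and that the quotient pieces are disjoint in $\C_{ex/reg}$. I would handle this by writing the relation in regular logic as the disjunction of the two saturations and using decidability of $D$ to separate the cases.
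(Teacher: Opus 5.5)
The paper does not prove this statement; it imports it from Proposition~5.1 of the cited source and only checks its hypotheses for $\defT(T)$ in Theorem~\ref{degenerate theorem}. There is no internal argument to compare against, and your proposal is a correct, self-contained proof.

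Your overall structure is sound. You show that $\C_{ex/reg}$ is itself a pretopos into which $\C$ embeds coherently. You then note that a regular functor out of $\C_{ex/reg}$ is coherent once its restriction to $\C$ is, because each subobject of $(X,E)$ is $\exists_q$ of its pullback to $X$ and $\exists_q$ preserves joins. Coherence of $\C_{ex/reg}$ via $E$-saturated subobjects is unconditional, as you say.

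Hypotheses (1) and (2) enter only in your coproduct construction, and it works:
\begin{itemize}
\item $R$ restricted to each piece is the kernel pair of the projection to $A$ (resp.\ $B$). That projection is a regular epimorphism because $B'\times D$ and $A'\times N$ are well-supported.
\item Each piece is $R$-saturated because $\Delta_D\wedge N=0$. So the two images in $Q$ pull back to the two pieces and are disjoint.
\item A union of two disjoint subobjects is a pullback-stable disjoint coproduct in any coherent category (Johnstone's \emph{Elephant}, \S A1.4). Exactness is not needed for that step.
\end{itemize}
You only use $\Delta_D\wedge N=0$ and that $N$ is well-supported, never $\Delta_D\vee N=D\times D$.

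The general case does not need to pass through $E+F$ on $X+Y$. Run the same construction with $X\hookrightarrow A'$ and $Y\hookrightarrow B'$. Let $R$ identify points of the first piece whose $X$-coordinates are $E$-related, and points of the second piece whose $Y$-coordinates are $F$-related. Then $(U,R)$ is already an object of $\C_{ex/reg}$ in the Carboni--Vitale presentation, and the same argument shows it is $(X,E)+(Y,F)$.

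Compared with the bare citation, your route costs some length. In return it makes visible that coherence and exactness of $\C_{ex/reg}$ come for free. It also shows that the non-degeneracy hypotheses serve only to build binary coproducts, which is exactly the failure in Example~\ref{degenerate example2}.
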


\begin{thm}\label{degenerate theorem}
    The following are equivalent:
    \begin{enumerate}
        \item There is a sort $S$, such that $T\vdash \exists x,y:S \  x\neq y$.
        \item The functor $\defT(T)_{ex/reg}\rightarrow\overline{\defT(T)}$ is an equivalence of categories.
        \item The category $\defT(T^{eq})$ is a pretopos.
    \end{enumerate}
\end{thm}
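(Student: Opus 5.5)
I would prove the cycle (1)$\Rightarrow$(2)$\Rightarrow$(3)$\Rightarrow$(1), with Theorem~\ref{EPR} doing the main work and Theorem~\ref{Teq is exact} translating between $\defT(T)_{ex/reg}$ and $\defT(T^{eq})$.

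For (1)$\Rightarrow$(2), I check the two hypotheses of Theorem~\ref{EPR} for $\C=\defT(T)$. Recall from Proposition~\ref{regular defT} that an object $[\phi(x)]$ is well-supported exactly when $[\phi(x)]\to[\top]$ is provably surjective, i.e.\ $T\vdash\exists x\,\phi(x)$. Since $T$ is complete, this just means $\phi$ is nonempty in $\U$.
\begin{itemize}
\item For hypothesis (1), take $A=[\phi(x)]$ with $x$ of sort $\bar S$. If the product sort $\bar S$ is nonempty, the inclusion $A\hookrightarrow[x=x]$ is a monomorphism into a well-supported object. If $\phi$ is empty, the empty object embeds into any nonempty definable set, for instance $[x=x]$ on the sort $S$ from (1). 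The only delicate case is the empty sort $\U^0$. There $[\bot]\hookrightarrow[\top]$ works, since $[\top]$ is terminal and well-supported.
\item For hypothesis (2), take $D=[x=x]$ on the sort $S$. The diagonal has complement $[x\neq y]$, because $\defT(T)$ is Boolean. That complement is well-supported precisely because $T\vdash\exists x,y\,x\neq y$.
\end{itemize}
Theorem~\ref{EPR} then gives (2).

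For (2)$\Rightarrow$(3): $\overline{\defT(T)}$ is a pretopos by definition, and $\defT(T)_{ex/reg}\simeq\defT(T^{eq})$ by Theorem~\ref{Teq is exact}. Being a pretopos is invariant under equivalence of categories, so $\defT(T^{eq})$ is a pretopos.

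For (3)$\Rightarrow$(1), I argue by contrapositive. Suppose that for every sort $S$ we have $T\vdash\forall x,y{:}S\ x=y$, so every sort has at most one element in $\U$. Products of such sorts also have at most one element. Then every definable set in $T$, in any finite tuple of home sorts, is empty or a singleton. Now pass to $T^{eq}$. Any quotient of a set with at most one element has at most one element, so every definable set of $T^{eq}$ has at most one element. As in Example~\ref{degenerate example2}, the object $1+1$ cannot exist. Indeed, a coproduct of $1$ with itself in a pretopos is disjoint, so the two coprojections $1\rightrightarrows 1+1$ have pullback the initial object $0$, which is empty. Hence the coprojections are distinct points, and $1+1$ has at least two elements in $\U$, a contradiction. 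So $\defT(T^{eq})$ is not a pretopos.

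The main obstacle I expect is bookkeeping in (1)$\Rightarrow$(2): checking the well-supportedness criterion carefully, especially for objects in the empty sort and for empty sorts. A further subtlety is whether the paper's multi-sorted convention allows sorts of $T$ that are empty in $\U$. If it does, I would handle them by embedding into the nonempty product with $S$ rather than relying on $[x=x]$ being nonempty.
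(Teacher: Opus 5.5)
Your proof is correct and follows the same route as the paper. You verify the hypotheses of Theorem~\ref{EPR} in $\defT(T)$ for (1)$\Rightarrow$(2), transfer along $\defT(T)_{ex/reg}\simeq\defT(T^{eq})$ (Theorem~\ref{Teq is exact}) for (2)$\Rightarrow$(3), and use the multi-sorted version of Example~\ref{degenerate example2} for (3)$\Rightarrow$(1); your handling of empty objects and your disjointness argument for $1+1$ supply details the paper leaves implicit.
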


\begin{proof}
    ($1\Rightarrow2$) In $\defT(T)$, a regular epimorphism is a definable surjection, an object is well-supported if it is non-empty, and every object is decidable. Therefore, condition (1) in Theorem~\ref{EPR} is always satisfied. An object satisfies condition (2) if it has more than one element, which is the assumption.

    ($2\Rightarrow3$) This follows from Theorem \ref{Teq is exact}.

    ($3\Rightarrow1$) The contraposition is precisely the multi-sorted version of Example \ref{degenerate example2}.
\end{proof}

%% file: 3Basic_properties_of_pro.tex
\bigskip

\section{Types and the pro-completion}\label{pro}
In this section, we study the pro-completion of $\defT(T)$. In the category theory literature, Barr \cite{BARR1986113} studied $\Pro (\C)$ in the case when $\C$ is regular. Textbooks on shape theory also contain references for the pro-completion, such as \cites[Ch.~I \S1\addsemicolon]{alma992985533896801631}[Appendix]{alma992977584035101631}. 

\medskip

\subsection{Types and morphisms of types}
We first give the model theoretic description of $\Pro(\defT(T))$ as the category of types, which was first studied in \cite{KAMENSKY2007180}. Continue to fix $T$ to be a consistent complete first-order theory with monster model $\mathcal{U}$. Fix a suitably big cardinal $\kappa$ and say a set is small if its cardinality is strictly less than $\kappa$. When we say `sufficiently large', we mean larger than $\kappa$.

\begin{defin}
\begin{enumerate}
    \item A \emph{type} is a $T$-deductively-closed set of formulas in a small set of variables (not necessarily complete). To emphasise the categorical approach, we write $X, Y$ for a type (rather than the model-theoretic convention of $p,q$). We write $X_i$ for the formulas in the type $X$, and identify $(X_i)_{i\in I}$ with $X$ itself. The notation is justified by the fact that $X=\lim X_i$ in $\Pro(\defT(T))$.
    \item We define the \emph{category of types} as follows:
    \begin{itemize}
        \item Objects are types and falsity $\bot$;
        \item A morphism $f:(X_i)_{i\in I}\rightarrow (Y_j)_{j\in J}$ is induced by a collection of definable functions $f_j: X_{i_j} \rightarrow Y_j$ for each $j\in J$, which is compatible in the following sense: for any $j,j'\in J$,
    
        \[ T \cup X(x) \vdash f_j(x)=f_{j'}(x) \]

        We then take a quotient on the set of compatible collections of definable functions by the following equivalence relation: for any two compatible collections $(f_j)_{j\in J}, (g_j)_{j\in J}$ from $X$ to $Y$,

        \[(f_j)\sim (g_j) \ \Leftrightarrow \ T \cup X(x) \vdash f_j(x)=g_j(x) \textup{ for all }j. \]
        Hence, more precisely a morphism is an equivalence class of compatible collections of definable functions.
    \end{itemize}
\end{enumerate}
\end{defin}

\begin{remark}
    We require a type to be deductively closed, so that if two morphisms in the category of types are $T$-equivalent, then they are the same morphism. In particular, this ensures that there is only one identity morphism for each object, and composition is well-defined. Having said that, we might define a specific type by giving a non-deductively-closed set of formulas. Formally, we mean the deductive closure of the given set.
    
    Unpacking the definition of a morphism in the special case when the domain or the codomain is $\bot$, one sees that $\bot$ is the initial object.

    By compactness, a collection of definable functions is compatible if and only if for any $j,j'\in J$, there is some $k\in J$ such that $T\cup X_k(x)\vdash f_j(x) = f_{j'}(x)$. Likewise, two compatible collections $(f_j),(g_j)$ are equivalent if and only if for all $j$, there is some $k$ such that $T\cup X_k(x)\vdash f_j(x)=g_j(x)$. Compare this to \cite[Ch.~I, pp. 4--6]{alma992985533896801631}.

    A compactness argument also shows that a morphism is precisely a pro-definable set, whose interpretation in a suitably saturated $\U$ is the graph of a function. This is a result of \cite{KAMENSKY2007180}, which is recorded here as a lemma.
\end{remark}

\begin{lem}[{\cite[Corollary 9]{KAMENSKY2007180}}]\label{functions of Pro}
    Let $\U\models T$ be sufficiently saturated. Let $X,Y\in\Pro(\defT(T))$. There is a natural bijection between $\Hom(X,Y)$ and the subobjects of $X\times Y$, whose interpretation in $\mathcal{U}$ is a function from $X(\mathcal{U})$ to $Y(\mathcal{U})$
\end{lem}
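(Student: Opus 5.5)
The plan is to construct the bijection in both directions and then check that the two constructions are mutually inverse and natural. Subobjects of $X\times Y$ in $\Pro(\defT(T))$ are identified with (deductively closed) types $Z(x,y)$ extending $X(x)\cup Y(y)$. Here I use that monomorphisms into a type are, up to isomorphism, given by adding formulas; this needs a small argument via the image factorisation. Throughout, $\U$ is $\kappa^+$-saturated, so every small type is determined by its realisations in $\U$.

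\emph{Morphism to graph.} Given a morphism represented by a compatible family $(f_j)_{j\in J}$ with $f_j: X_{i_j}\to Y_j$, I take its graph
\[\Gamma(x,y) := X(x)\cup Y(y)\cup\{\, f_j(x)=y_j : j\in J\,\},\]
where $y_j$ is the subtuple of $y$ on which $Y_j$ lives; if $Y_j$ involves several variables, the equation is read componentwise. I then check three things:
\begin{itemize}
\item $\Gamma(\U)$ is the graph of a function $X(\U)\to Y(\U)$. Compatibility forces all the $f_j(a)$ to agree on shared coordinates, so for each $a\in X(\U)$ the values $f_j(a)$ glue to a unique tuple $b$. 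This $b$ lies in $Y(\U)$, because every formula of $Y$ is some $Y_j$ and $f_j$ maps into $Y_j$.
\item Equivalent families give the same graph, by the definition of $\sim$.
\end{itemize}

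\emph{Graph to morphism.} Conversely, let $\Gamma(x,y)\supseteq X\cup Y$ be a type whose realisations form the graph of a function $X(\U)\to Y(\U)$. The key step, and the main obstacle, is to extract definable functions from a type-definable graph.

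\begin{itemize}
\item \textbf{Definability of each coordinate.} Fix a finite subtuple $y_j$ of $y$. By saturation, $\Gamma(x,y)\cup\Gamma(x,y')\cup\{y_j\neq y'_j\}$ is inconsistent, so by compactness there are finitely many formulas $\gamma\in\Gamma$ (closed under conjunction) such that $\gamma(x,y)\wedge\gamma(x,y')\to y_j=y'_j$.
\item \textbf{Totality.} Similarly, $X(x)\cup\{\neg\exists y\,\gamma(x,y)\}$ is inconsistent, so some $X_i$ proves $\exists y\,\gamma(x,y)$.
\item \textbf{Extracting $f_j$.} Combining these, some $X_{i}$ (and $\gamma$) lets me define $f_j$ on $X_{i}$ by $f_j(x)=y_j \leftrightarrow \exists y'\,(\gamma(x,y')\wedge y'_j=y_j)$. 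This is a definable function into the sort of $y_j$. After possibly strengthening $X_i$ once more by compactness, it lands in $Y_j$ for each formula $Y_j$ of $Y$ in those variables, since $\Gamma\vdash Y_j(y)$ and $\Gamma$ is functional on $X(\U)$.
\end{itemize}

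The resulting family $(f_j)$ is compatible, because on $X(\U)$ every $f_j$ agrees with the unique function whose graph is $\Gamma$; by saturation this agreement is exactly provability from $T\cup X$. The constructions are mutually inverse: the graph of $(f_j)$ recovers $\Gamma$ by saturation, and the family extracted from the graph of $(f_j)$ is $\sim$-equivalent to $(f_j)$. Naturality in $X$ and $Y$ follows by composing graphs on realisations in $\U$. The only delicate point is the compactness bookkeeping in the extraction step, and in particular the requirement that $\U$ be saturated over small parameter sets, so that semantic uniqueness on $X(\U)$ implies syntactic uniqueness.
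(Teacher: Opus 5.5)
Your proof is correct, and it is essentially the compactness argument that the remark preceding the lemma alludes to; the paper gives no proof of its own and defers to Kamensky's Corollary~9. As an aside, the graph-to-morphism direction also follows formally from Proposition~\ref{isomorphism is calculated in monster}: for a functional subobject $\langle p,q\rangle\colon Z\hookrightarrow X\times Y$, the projection $p$ is bijective on $\U$-points and hence an isomorphism, and $q\circ p^{-1}$ is the required morphism, with graph $\langle 1,qp^{-1}\rangle=\langle p,q\rangle p^{-1}$ equal to the given subobject.
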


\smallskip

We recall the categorical construction of $\Pro (\C)$.
\begin{defin}
\begin{enumerate}
    \item A small category $I$ is \emph{cofiltered} if it has finite cones.
    \item A \emph{cofiltered limit} in $\C$ is a limit whose diagram is given by a cofiltered category.
    \item Given a cofiltered diagram $F:I\rightarrow\C$, the \emph{transition maps} of $F$ are the morphisms $F\alpha:Fi\rightarrow Fj$ for morphisms $\alpha:i\rightarrow j$ in $I$.
    \item Given a category $\C$, the \emph{pro-completion} $\Pro(\C)$ is a category with cofiltered limits, equipped with a functor $\C\rightarrow\Pro(\C)$ inducing, for any $\mathcal{D}$ with cofiltered limits, an equivalence of categories
        \[ \Fun(\C,\mathcal{D}) \simeq \textup{Cofilt}(\Pro(\C),\mathcal{D}),\]
        where $\textup{Cofilt}$ denotes the full subcategory of the functor category spanned by cofiltered-limit-preserving functors.
\end{enumerate}
\end{defin}

The universal property defines $\Pro(\C)$ up to equivalence. When $\C$ has all finite limits, $\Pro(\mathcal{C})$ is equivalent to $\textup{Lex}(\mathcal{C},\textbf{Set})^{op}$, the opposite category of finite-limit-preserving functors into \textbf{Set}. The embedding $\C\rightarrow\Pro(\C)$ is the restricted Yoneda embedding. This follows from \cite[Definition~6.1.1, Remark~6.1.7]{lurie_ultracategories}; see also \cite[Appendix, Corollary 2.8]{alma992977584035101631}.

Alternatively, by \cite[Remark 6.1.6]{lurie_ultracategories}, $\Pro(\C)$ admits the following description:
\begin{itemize}
    \item Objects are cofiltered diagrams of $\C$;
    \item On morphisms, we have the natural bijection 
    \[ \hom_{\Pro(\mathcal{C})}((X_i)_{i\in I}, (Y_j)_{j\in J}) \cong \lim_{j\in J} \colim_{i\in I} \hom_\mathcal{C}(X_i,Y_j). \]
\end{itemize}

\begin{lem}
    The category $\Pro(\defT(T))$ is equivalent to the category of types.
\end{lem}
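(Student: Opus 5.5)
We compare the category of types directly with the explicit description of $\Pro(\defT(T))$ recalled above: objects are cofiltered diagrams in $\defT(T)$, and morphisms are given by $\lim_{j}\colim_{i}\hom(X_i,Y_j)$. We build a functor $\Phi$ from the category of types to $\Pro(\defT(T))$ and show it is fully faithful and essentially surjective.

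\emph{The functor on objects.} A type $X=(X_i)_{i\in I}$ is deductively closed, so it is closed under finite conjunctions. Preorder $I$ by setting $i\le i'$ when $T\vdash X_i\to X_{i'}$. Each such implication gives a definable inclusion $[X_i]\hookrightarrow[X_{i'}]$, where we regard $X_i$ as a formula in the finitely many variables it actually uses; the inclusion is followed by the projection forgetting extra variables. Closure under $\wedge$ makes this a cofiltered poset, so we get a cofiltered diagram in $\defT(T)$. We send $\bot$ to the one-object diagram $[\bot]$.

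\emph{Full faithfulness.} Fix $j$. The set $\colim_i\hom([X_i],[Y_j])$ consists of definable maps $f_j$ from some $X_{i}$, modulo agreement on a smaller $X_k$. By compactness, as in the Remark, agreement on some $X_k$ is the same as $T\cup X\vdash f_j=g_j$. Elements of the limit over $j$ are families that are compatible along the transition maps of $Y$. There is one subtlety: compatibility in the category of types was phrased as $f_j=f_{j'}$, but the intended meaning is that $f_j$ and $f_{j'}$ agree on shared coordinates and are compatible with the transition maps $Y_j\to Y_{j'}$. Again by compactness, this holds on $X$ iff it holds on some $X_k$, which is exactly the condition for lying in the limit. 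So hom-sets biject. We also check that composition matches: composing representatives in both descriptions gives the same definable maps.

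\emph{Essential surjectivity.} This is the main obstacle. A general cofiltered diagram $F\colon I\to\defT(T)$ has non-injective transition maps and arbitrary index categories, so it is not obviously a type. The plan has three steps.
\begin{enumerate}
\item Reindex $F$ along a cofinal functor from a cofiltered poset. This is standard and does not change the limit in $\Pro$.
\item Take the tuple of variables $(x_i)_{i\in I}$, with $x_i$ of the sort of $F(i)$, and the type $X$ generated by the formulas $F(i)(x_i)$ and $x_{i'}=F(\alpha)(x_i)$.
\item Show that the diagram of finite subconjunctions of $X$ is pro-isomorphic to $F$.
\end{enumerate}
For step 3, each finite conjunction involves finitely many indices. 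Cofilteredness gives an index $k$ below all of them, and the finite conjunction is then definably isomorphic to a definable subset of $F(k)$ through projections, because the equations determine the other coordinates. This yields mutually inverse pro-morphisms. The degenerate case needs separate handling: if the generated type is inconsistent, then by compactness some $F(i)$ is empty, and $F\cong[\bot]\cong\bot$.
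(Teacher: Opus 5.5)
Your proposal is correct and follows essentially the same route as the paper. It uses the same two object-level translations (a type as the cofiltered preorder of its formulas under implication, and a cofiltered diagram $F$ as the type generated by the $F(i)(x_i)$ and the equations $x_{i'}=F(\alpha)(x_i)$), and it matches morphisms by unpacking $\lim_j\colim_i\hom(X_i,Y_j)$ via compactness, in more detail than the paper's one-line appeal to the natural bijection.

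Two small tidy-ups. First, in the preorder you should also require the variables of $X_{i'}$ to be among those of $X_i$: otherwise, e.g., $x_1=x_1$ implies $x_2=x_2$ but no projection connects them. This costs nothing, since conjunctions take unions of variables. Second, in step 3 what you need is that, for $k$ below all indices occurring in finitely many generators, all of $F(k)$ maps into their conjunction; this is automatic once you have reindexed by a poset.
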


\begin{proof}
    On objects, given a cofiltered system $(X_i)_{i\in I}$, we can form the a type in the variables $(x_i)_{i\in I}$, where each $x_i$ is pairwise disjoint, by taking the deductive closure of

    \[ \{ X_i(x_i) \ | \ i\in I \} \cup \{ \alpha(x_i)=x_j \ | \ \alpha: X_i \rightarrow X_j \textup{ is a transition map}\}. \]
    Conversely given a type $\{\phi_i(x_i) \ | \ i\in I \}$ in the variables $x=\bigcup x_i$, we define the cofiltered system $X_i = \phi_i(x)$, where there is an inclusion morphism $X_i\rightarrow X_j$ whenever $T\vdash X_i\rightarrow X_j$.

    On morphisms, the equivalence is simply spelling out the natural bijection; for details, see \cite[Ch.~I~\S1.1]{alma992985533896801631}.
\end{proof}

\begin{cor}
    The category of types is a regular category and the embedding $\defT(T)\rightarrow\Pro(\defT(T))$ is a fully faithful regular embedding.
\end{cor}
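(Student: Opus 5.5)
To prove the corollary, I would transport the statement across the equivalence of the preceding lemma, so that it becomes a statement about $\Pro(\defT(T))$. Regularity is a property invariant under equivalence of categories, and so is being a fully faithful regular functor. It therefore suffices to show two things: that $\Pro(\C)$ is regular whenever $\C$ is regular, and that the canonical embedding $\C\to\Pro(\C)$ is fully faithful and regular. I would apply both with $\C=\defT(T)$, which is regular by the corollary to Proposition~\ref{regular defT}.

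\textbf{Full faithfulness} comes straight from the hom-set formula
\[
\hom_{\Pro(\C)}((X_i),(Y_j))\cong\lim_j\colim_i\hom_\C(X_i,Y_j).
\]
For singleton diagrams it reduces to $\hom_\C(X,Y)$. In terms of types, a morphism between definable sets is just a single definable function modulo $T$.

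\textbf{Finite limits.} I would use the description $\Pro(\C)\simeq\textup{Lex}(\C,\textbf{Set})^{op}$. Finite limits in $\Pro(\C)$ are finite colimits of lex functors, and these exist because $\textup{Lex}(\C,\textbf{Set})$ is locally finitely presentable. The Yoneda embedding sends finite limits in $\C$ to finite colimits of representables, so the embedding preserves them. Concretely, for types the product is the union of the types in disjoint variables, and the equaliser of $(f_j),(g_j)\colon X\rightrightarrows Y$ is $X$ together with the formulas $f_j(x)=g_j(x)$. This is the same as the proof of Proposition~\ref{regular defT}(1), applied levelwise.

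\textbf{Regular epimorphisms.} Here I would use level representations. Any morphism, and any finite diagram without loops, in $\Pro(\C)$ can be reindexed to a natural transformation between cofiltered diagrams over a common index category. Cofiltered limits commute with finite limits in $\Pro(\C)$, since limits commute with limits. So the regular epi–mono factorisation of $f=\lim f_i$ can be taken as the limit of the levelwise factorisations $X_i\twoheadrightarrow \textup{im} f_i\hookrightarrow Y_i$ in $\C$. Model-theoretically, the image of a type-definable map is the type-definable set $\{\exists x\,(X_i(x)\wedge f_i(x)=y)\}_i$. It follows that the coequaliser of a kernel pair exists and is this levelwise image.

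\textbf{Main obstacle.} I expect the hardest step to be showing two things:
\begin{itemize}
\item a cofiltered limit of regular epimorphisms from $\C$ is again a regular epimorphism in $\Pro(\C)$;
\item such maps are stable under pullback.
\end{itemize}
Pullback stability follows from the first point, because pullbacks can also be computed levelwise and regular epis in $\C$ are pullback stable.

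For the first point, my first attempt would be the type-theoretic argument in a saturated $\U$. If each $f_i$ is provably surjective, then compactness makes $f\colon X(\U)\to Y(\U)$ surjective. The proof of Proposition~\ref{regular defT}(2) then carries over using Lemma~\ref{functions of Pro}: the induced map $h$ is the type-definable relation $\exists x\,(X(x)\wedge f(x)=y\wedge g(x)=z)$, which is total and single-valued, hence a morphism. Alternatively I would cite Barr~\cite{BARR1986113}, where regularity of $\Pro(\C)$ for regular $\C$ is proved directly.

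Finally, preservation of regular epis by the embedding also follows from this characterisation. A surjective definable function is in particular surjective on $\U$-points, so it stays a regular epimorphism in the category of types.
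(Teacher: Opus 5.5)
Your argument is correct in outline, but it takes a different route from the paper. The paper's entire proof is the citation of \cite[Theorem 1]{BARR1986113}: for any regular $\C$, $\Pro(\C)$ is regular and $\C\to\Pro(\C)$ is a fully faithful regular embedding. This is applied to $\C=\defT(T)$ and transported along the equivalence with the category of types.

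You instead reconstruct Barr's theorem piece by piece:
\begin{enumerate}
\item full faithfulness from the hom-set formula;
\item finite limits from $\textup{Lex}(\C,\textbf{Set})^{op}$;
\item images as cofiltered limits of levelwise images;
\item the one non-formal input, that a cofiltered limit of regular epimorphisms of $\C$ is a regular epimorphism of $\Pro(\C)$, via saturation.
\end{enumerate}
For the last step you get surjectivity on $\U$-points by compactness, build the functional type-definable relation $h$, and check $hf=g$ and uniqueness in $\U$ using Lemma~\ref{functions of Pro}. This works, and it gives a sharper, model-theoretic picture: regular epimorphisms are exactly the maps surjective on $\U$-points.

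However, it uses compactness with negation. So unlike Barr's argument, it does not extend to a general regular $\C$ or to the positive setting. The general substitute is that cofiltered limits commute with finite colimits in $\Pro(\C)$, which the paper uses later in Lemma~\ref{level regular epimorphism}. In short, the citation is shorter and fully general, while your route is self-contained and makes the regular structure of the category of types explicit.

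One step needs an extra argument: pullback stability, which you get by computing pullbacks levelwise. A level representation $(f_i,g_i)$ of a cospan $X\xrightarrow{f}Y\xleftarrow{g}Z$ with $f$ a regular epimorphism need not have the $f_i$ regular epimorphisms; this is the warning after Lemma~\ref{level regular epimorphism}. For example, in $\Pro(\textbf{FinSet})$ take $\{*\}\to\{0,1\}$, $*\mapsto 0$, where the transition maps of $\{0,1\}$ are constant at $0$. The limit map is an isomorphism of points, yet no level map is surjective.

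So you must first re-present $f$:
\begin{enumerate}
\item Factor $f_i=m_ie_i$ through $W_i=\textup{im} f_i$.
\item Since $(\lim m_i)(\lim e_i)$ is a regular epi--mono factorisation of the regular epimorphism $f$, and such factorisations are unique, $\lim m_i$ is an isomorphism.
\item Replace $Y_i$ by $W_i$ and $Z_i$ by $Z_i\times_{Y_i}W_i$. This gives a level representation of the same cospan in which each $e_i$ is a regular epimorphism.
\end{enumerate}
The levelwise pullback is then a cofiltered limit of regular epimorphisms, as required.

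Alternatively, within your model-theoretic route, also prove the converse: a regular epimorphism is surjective on $\U$-points, by the argument of Proposition~\ref{regular defT}(5) applied to the type-definable image. Pullback stability is then immediate, since interpretation in $\U$ preserves pullbacks and surjections of sets are pullback-stable.
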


\begin{proof}
    By \cite[Theorem 1]{BARR1986113}, this is true for any regular category, in particular $\defT(T)$.
\end{proof}

\smallskip

Historically, people studying pro-completions have been interested in \emph{uniform approximation}. The problem is as follows: given a certain diagram of shape $I$ in $\Pro(\C)$, can we find diagrams of the same shape in $\C$, whose cofiltered limit is the given diagram? More explicitly, consider that the category $[I,\Pro(\C)]$ have cofiltered limits and there is a functor $[I,\C]\rightarrow[I,\Pro(\C)]$ by composing with the inclusion functor $\C\rightarrow\Pro(\C)$. The universal property of $\Pro([I,\C])$ then induces a unique functor 
\[E^I_\C: \Pro([I,\C]) \rightarrow [I,\Pro(\C)].\] 
The uniform approximation problem then asks: under what assumptions is $E^I_\C$ an equivalence? This was studied in \cites[Expos\'{e} I Proposition 8.8.5 \addsemicolon]{a.TheorieToposCohomologie1972}[Appendix, Proposition 3.3 \addsemicolon]{alma992977584035101631}[Ch.~II \S3]{meyerphd}. The problem turns out to be quite subtle, and as the most recent contribution \cite[Example 1.5]{henryIndcompletionAccessibilityFunctor2025} points out, there have been mistakes in the past literature. We state the uniform approximation theorem, specialised and dualised to fit our setting.

\begin{thm}[Uniform approximation {\cite[Theorems 1.2 and 1.3]{henryIndcompletionAccessibilityFunctor2025}}]\label{simonhenry}
    Let $I$ be an indexing category.
    \begin{enumerate}
        \item The category $I$ is finite if and only if for any category $\C$ with finite limits, the functor $E^I_\C: \Pro([I,\C]) \rightarrow [I,\Pro(\C)]$ is an equivalence.
        \item The category $I$ is finite and has no non-identity endomorphism if and only if for any category $\C$, the functor $E^I_\C: \Pro([I,\C]) \rightarrow [I,\Pro(\C)]$ is an equivalence.
    \end{enumerate}
\end{thm}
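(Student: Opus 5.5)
The natural strategy is to handle full faithfulness and essential surjectivity of $E^I_\C$ separately, and to use a counterexample for each converse.

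\emph{Full faithfulness.} Write $F=(F_a)_{a\in A}$ and $G=(G_b)_{b\in B}$ for objects of $\Pro([I,\C])$. Then
\[\hom(F,G)\cong\lim_b\colim_a \textup{Nat}(F_a,G_b).\]
Now $\textup{Nat}(F_a,G_b)$ is an end $\int_{i}\hom_\C(F_a i,G_b i)$. When $I$ is finite (finitely many objects and morphisms), this end is a finite limit in \textbf{Set}. Since finite limits commute with filtered colimits in \textbf{Set}, we get
\[\colim_a \textup{Nat}(F_a,G_b)\cong\int_i \colim_a\hom_\C(F_a i,G_b i)\cong \textup{Nat}(\lim_a F_a,\,G_b),\]
where the last $\textup{Nat}$ is computed in $[I,\Pro(\C)]$. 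Taking $\lim_b$ and commuting it with the end identifies this with $\textup{Nat}(E^I_\C F,E^I_\C G)$. So finiteness of $I$ alone already gives full faithfulness for every $\C$.

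\emph{Essential surjectivity when $\C$ has finite limits.} Fix $D\colon I\to\Pro(\C)$ and consider the comma category $D\downarrow[I,\C]$ of pairs $(P,\,D\to P)$ with $P\in[I,\C]$. I would show that it is cofiltered and that $D$ is the limit of the forgetful diagram.
\begin{itemize}
\item Cofilteredness uses the pointwise finite products and pointwise equalisers of $\C$. For two parallel maps $P\rightrightarrows P'$ under $D$, the pointwise equaliser is again a functor $I\to\C$ through which $D$ factors.
\item For the limit statement, it suffices that each cone leg $D(i)\to c$, with $c\in\C$, factors through some object of the comma category. Take the right Kan extension of $c$ along $i\colon\mathbf 1\to I$, namely $k\mapsto\prod_{\hom(k,i)}c$. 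This is a finite product because $I$ is finite, and maps $D\to\textup{Ran}_i c$ correspond exactly to maps $D(i)\to c$.
\item A routine check then shows that cofiltered limits in $[I,\Pro(\C)]$ are computed pointwise, so $D\simeq E^I_\C(D\downarrow[I,\C])$.
\end{itemize}

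\emph{Arbitrary $\C$.} Without finite limits, $\textup{Ran}_i c$ and pointwise equalisers are unavailable. If $I$ is finite with no non-identity endomorphisms, its objects can be ranked so that every non-identity morphism strictly decreases rank. I would then build the approximating cofiltered system by induction on rank. At each stage, reindex the pro-objects $D(k)$ of higher rank along a common cofiltered index set, using the standard fact that finitely many pro-morphisms can be given level representations. The absence of endomorphisms is exactly what makes this induction terminate without needing to solve fixed-point or idempotent compatibility conditions.

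\emph{Converses and the main obstacle.} For infinite $I$, I would take $I$ infinite discrete and $\C=\textbf{FinSet}$. A family of Stone spaces each requiring its own independent index category cannot arise from a single cofiltered diagram in $[I,\C]$, or full faithfulness fails because the end is an infinite product. For a non-identity endomorphism, take $I$ to be the monoid $\{1,e\}$ with $e^2=e$ and choose $\C$ lacking splittings, so that an idempotent in $\Pro(\C)$ cannot be approximated levelwise. I expect this inductive levelwise construction for arbitrary $\C$, together with the endomorphism counterexample, to be the main obstacle; the finite-limit case is comparatively formal.
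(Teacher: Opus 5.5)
The paper does not prove this statement (it is quoted from Henry), so your proposal has to stand on its own. Your forward direction of (1) does. For finite $I$ the end $\int_i\hom_\C(F_a i,G_b i)$ is a finite limit, which gives full faithfulness for every $\C$. When $\C$ has finite limits, the comma category $D\downarrow[I,\C]$ is cofiltered with limit $D$. (Factoring each leg through $\textup{Ran}_i c$ gives only surjectivity of $\colim_{(P,\alpha)}\hom(P(i),c)\to\hom(D(i),c)$. For injectivity, turn two maps $g,h\colon P(i)\to c$ that agree after $\alpha_i$ into $\tilde g,\tilde h\colon P\to\textup{Ran}_i c$ and pass to their pointwise equaliser; this is routine.) The forward direction of (2), by contrast, describes an intended argument rather than giving one. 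Your ranking claim is false as stated: the walking isomorphism has no non-identity endomorphisms, yet its two non-identity arrows cannot both lower rank, so you must first pass to a skeleton. More seriously, this direction requires building a single cofiltered index category with a functor to $[I,\C]$, cofinal in every $D(k)$, on which the relations of $I$ hold on the nose even though $\C$ has no equalisers to enforce them. (One plausible route: fix levels for low-rank objects first, then refine the levels of higher-rank objects until each relation, which holds in $\Pro(\C)$, holds strictly; absence of loops should keep refinement from disturbing earlier choices.) You defer exactly this step to ``the standard fact that finitely many pro-morphisms can be given level representations''.

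The genuine gap is in the converses. They must hold for \emph{every} infinite $I$, respectively every finite $I$ with a non-identity endomorphism, but you examine one shape each. For discrete $I$, your first alternative is wrong: if $D(i)=\lim_{a\in A_i}X_{i,a}$, indexing by the cofiltered product $\prod_i A_i$ shows that $E^I_\C$ is essentially surjective. Your second alternative is right once made concrete. Take $I=\mathbb{N}$ discrete, $\C=\textbf{FinSet}$, $F_n(i)=2^n$ and $G(i)=2$; the family of coordinate projections $2^{\mathbb{N}}\to 2$ lies in $\prod_i\colim_n$ but not in $\colim_n\prod_i$. Now take $I=B\mathbb{N}$, the one-object category of the monoid $(\mathbb{N},+)$. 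Naturality need only be checked on the generator, so $\textup{Nat}$ is an equaliser and your own argument makes $E^I_\C$ fully faithful for every $\C$. The failure there is of essential surjectivity. With $\sigma$ the shift, $(2^{\mathbb{N}},\sigma)$ is not a cofiltered limit of finite sets with an endomorphism. The first projection would factor as $h\circ p$ with $p$ equivariant into some finite $(P,t)$, forcing $x_n=h(t^n p(x))$ to be eventually periodic for every $x\in 2^{\mathbb{N}}$. So neither of your mechanisms handles a general infinite $I$.

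For (2), ``lack of splittings'' is not the operative obstruction. It says nothing about automorphisms of finite order, as in $I=B(\mathbb{Z}/2)$, and even for $\{1,e\}$ you exhibit no $\C$. The missing idea is a $\C$, necessarily without finite limits by (1), in which the endomorphism exists on a pro-object but on no object of $\C$. For instance, take objects $(k,n)\in\textup{ob}(I)\times\mathbb{N}$, let $\hom((k,m),(l,n))=\hom_I(k,l)$ for $m>n$, let the only other morphisms be identities, and compose in $I$. Every functor $I\to\C$, hence every cofiltered limit of such, sends endomorphisms to identities. On the other hand, $k\mapsto((k,n))_{n\in\mathbb{N}}$, with transition maps $\textup{id}_k$ and $D(u)$ represented levelwise by $u$, is a functor $D\colon I\to\Pro(\C)$ with $\textup{End}(D(i))\cong\textup{End}_I(i)$. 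So $D$ is not in the essential image.
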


As a corollary, we obtain an easier description of the morphisms of $\Pro(\C)$. Given a morphism $f$ in $\Pro(\C)$, the domain $X$ and the codomain $Y$ can be given by the same indexing category as $(X_i)$ and $(Y_i)$, respectively. Moreover, there are morphisms $(f_i:X_i\rightarrow Y_i)$ in $\C$, whose limits in the arrow category $[\{\bot\rightarrow \top\},\C]$ is $f$.

\begin{defin}
    A morphism $f:X\rightarrow Y$ in $\Pro(\mathcal{C})$ is called a \textit{level morphism} if there is a cofiltered category $I$ with a functor $F:I\rightarrow [\{\bot\rightarrow \top\},\C]$, such that $\lim_I F \cong f$. In this case, we write $f\cong\lim(f_i:X_i\rightarrow Y_i)$, where $f_i = Fi$ for $i\in I$.
\end{defin}

\begin{lem}\label{level morphisms}
    Every morphism in $\Pro(\mathcal{C})$ is a level morphism. 
\end{lem}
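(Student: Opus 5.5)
This is the case $I = \{\bot\rightarrow\top\}$ of the uniform approximation theorem, Theorem~\ref{simonhenry}. The arrow category $I$ has two objects and a single non-identity arrow, so it is finite and has no non-identity endomorphisms. Part~(2) of Theorem~\ref{simonhenry} therefore applies for an arbitrary category $\C$, with no finite-limit hypothesis needed; when $\C$ has finite limits, for instance $\C=\defT(T)$, part~(1) would suffice as well. In either case the functor
\[E^I_\C:\Pro([I,\C])\rightarrow[I,\Pro(\C)]\]
is an equivalence of categories.

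Now let $f:X\rightarrow Y$ be a morphism of $\Pro(\C)$, viewed as an object of $[I,\Pro(\C)]$. Essential surjectivity of $E^I_\C$ gives an object of $\Pro([I,\C])$, namely a cofiltered diagram $F:J\rightarrow[I,\C]$, together with an isomorphism $E^I_\C(F)\cong f$ in $[I,\Pro(\C)]$. It then remains to check that $E^I_\C$ sends a formal cofiltered limit $(F j)_{j\in J}$ to the actual limit $\lim_J F$ computed in $[I,\Pro(\C)]$. This is true by construction: $E^I_\C$ is the cofiltered-limit-preserving extension of the composite $[I,\C]\rightarrow[I,\Pro(\C)]$, and every object of $\Pro([I,\C])$ is the cofiltered limit of the images of its components. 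Since limits in a functor category are computed pointwise, writing $F j=(f_j:X_j\rightarrow Y_j)$ gives $\lim_J F = (\lim X_j\rightarrow \lim Y_j)$ with the induced map. Hence $f\cong\lim_J F$, and $f$ is a level morphism.

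The only point needing care is this last identification, that $E^I_\C$ commutes with the relevant cofiltered limits and that the resulting isomorphism lives in the arrow category rather than merely objectwise. Both follow from the universal property of $\Pro$ and the pointwise computation of limits, so no genuine obstacle remains once Theorem~\ref{simonhenry} is granted.

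If one wanted to avoid citing that theorem, the fallback is the classical reindexing construction from \cite[Ch.~I \S1]{alma992985533896801631}. Take $J$ to be the category of triples $(i,j,\phi)$ with $\phi:X_i\rightarrow Y_j$ representing the component of $f$ at $j$. One checks that this $J$ is cofiltered and that the projections recover $X$, $Y$ and $f$. Verifying cofilteredness is the tedious step.
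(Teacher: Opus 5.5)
Your proposal is correct and takes the same route as the paper, which applies part~(2) of Theorem~\ref{simonhenry} to the arrow category $I=\{\bot\rightarrow\top\}$. Your added unpacking is a correct elaboration of a step the paper leaves implicit: essential surjectivity of $E^I_\C$, combined with its preservation of cofiltered limits and the pointwise computation of limits in $[I,\Pro(\C)]$, gives $f\cong\lim_J F$.
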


\begin{proof}
    Apply part (2) of Theorem \ref{simonhenry} to $I=\{\bot\rightarrow \top\}$.
\end{proof}

\smallskip

Let us now study when two objects of $\Pro(\C)$ are isomorphic. 

\begin{defin}[{\cite[p.~149]{alma992977584035101631}}]
Let $I$ and $J$ be cofiltered categories. We say a functor $F:I\rightarrow J$ is \emph{cofinal} if for every $j\in J$, there is $i\in I$ and a morphism $Fi\rightarrow j$ in $J$.
\end{defin}

\begin{lem}[{\cite[Appendix, Corollary 2.5]{alma992977584035101631}}]\label{cofinal functor}
    Let $F:I\rightarrow J$ be a cofinal functor between cofiltered categories. Then, the pro-objects $(X_j)_{j\in J}$ and $(X_{Fi})_{i\in I}$ are isomorphic via the representing morphisms $(\textup{id}_{X_{Fi}})_{i\in I}$.
\end{lem}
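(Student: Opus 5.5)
We need to prove the lemma cited: if $F:I\to J$ is cofinal between cofiltered categories, then $(X_j)_{j\in J}$ and $(X_{Fi})_{i\in I}$ are isomorphic in $\Pro(\C)$ via the morphism represented by $(\textup{id}_{X_{Fi}})_{i\in I}$.

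The plan is to work with the hom-set formula
\[\hom_{\Pro(\C)}((X_k),(Y_l)) \cong \lim_l \colim_k \hom_\C(X_k,Y_l)\]
and build an explicit inverse. Write $X=(X_j)_{j\in J}$ and $X'=(X_{Fi})_{i\in I}$.

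\textbf{The two morphisms.} The given morphism $u:X\to X'$ assigns to each $i\in I$ the germ of $\textup{id}_{X_{Fi}}$, viewed as an element of $\colim_{j}\hom(X_j,X_{Fi})$ at the index $j=Fi$. These germs are compatible in $i$ because $F$ is a functor: for $\alpha:i\to i'$, the transition map $X_{F\alpha}$ composed with $\textup{id}_{X_{Fi}}$ equals $\textup{id}_{X_{Fi'}}$ composed with $X_{F\alpha}$, so the two define the same germ. For the inverse $v:X'\to X$, cofinality gives, for each $j\in J$, some $i$ and a map $\beta:Fi\to j$. We send $j$ to the germ of $X_\beta:X_{Fi}\to X_j$ in $\colim_i\hom(X_{Fi},X_j)$.

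\textbf{The main step: $v$ is well defined.} We must show this germ does not depend on the choice of $(i,\beta)$ and is compatible in $j$. This is where cofilteredness of both $I$ and $J$ is used, and it is the step I expect to be the main obstacle. Given two choices $\beta:Fi\to j$ and $\beta':Fi'\to j$:
\begin{enumerate}
\item Use that $I$ is cofiltered to get $i''$ with maps $\gamma:i''\to i$ and $\gamma':i''\to i'$. This produces two maps $\beta\circ F\gamma$ and $\beta'\circ F\gamma'$ from $Fi''$ to $j$.
\item These two maps need not be equal. Since $J$ is cofiltered, they are equalised by some $e:k\to Fi''$.
\item By cofinality, choose $Fi'''\to k$. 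This gives a common refinement at the index $Fi'''$.
\end{enumerate}
The germ formed at $Fi'''$ is in fact not yet shown to be of the form $F(\text{map in } I)$. This does not matter, however, because equality of germs in the colimit only requires some index of the diagram $X'$ at which the two composites agree. Since $X_{Fi'''}\to X_{Fi}$ is a map in the diagram $X'$ only if it is $X_{F\delta}$, we need a little more care here. The resolution is to note that $X'$ is indexed by $I$, so we must find $\delta:i'''\to i$ in $I$. To do this, we use cofilteredness of $I$ again, together with the standard fact that the colimit over a cofiltered $I$ can be computed after reindexing. If this becomes delicate, I would instead first prove that $F^{op}$ is final, in the sense that each comma category $j\downarrow F$ is nonempty and connected. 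Connectedness is exactly what the argument in steps 1 to 3 establishes, and then invariance of colimits under final functors gives
\[\colim_i \hom(X_{Fi},-) \cong \colim_j \hom(X_j,-)\]
directly. This cleaner route is the one I would commit to.

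\textbf{Checking the composites are identities.} With the isomorphism of colimits in hand, the hom-set formula yields
\[\hom(X,Z)\cong\hom(X',Z)\]
naturally for every pro-object $Z$, and this natural bijection is given by precomposition with $u$. By the Yoneda lemma, $u$ is then an isomorphism. Alternatively, one can check directly that $v\circ u$ and $u\circ v$ are identities. For $v\circ u$ at index $j$, the composite is the germ of $X_\beta$, which equals the identity germ of $X_j$ in $\colim_{j'}\hom(X_{j'},X_j)$. For $u\circ v$ at index $i$, we may choose $\beta=\textup{id}_{Fi}$, and by independence of choice the composite is the identity germ.
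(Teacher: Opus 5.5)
\textbf{There is a genuine gap, and it is where you suspected.} In steps 1--3 the equalising map $e\circ d\colon Fi'''\to Fi''$ lives in $J$, not in the image of $F$. Two things require a morphism $\delta\colon i'''\to i''$ \emph{in $I$} with $\beta\circ F\gamma\circ F\delta=\beta'\circ F\gamma'\circ F\delta$:
\begin{enumerate}
\item identifying the two germs in $\colim_{i}\hom(X_{Fi},X_j)$;
\item connecting $(i,\beta)$ and $(i',\beta')$ in the comma category $F\downarrow j$, whose objects are pairs $(i,\beta\colon Fi\to j)$ and whose morphisms come from $I$.
\end{enumerate}
Steps 1--3 provide no such $\delta$. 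So your claim that connectedness is ``exactly what the argument in steps 1 to 3 establishes'' is false. Passing to the finality formulation only restates the difficulty, and the appeal to ``reindexing'' is circular. The direct check that $v\circ u$ and $u\circ v$ are identities rests on the same independence of choice, so it fails too.

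\textbf{The gap cannot be closed from the hypothesis as stated.} With the paper's definition of cofinal (only that some $Fi\to j$ exists), the statement is false for general cofiltered categories. Take $I=\mathbf{1}$, and let $J$ be the one-object category whose endomorphisms are $\{1,e\}$ with $e^2=e$.
\begin{itemize}
\item $J$ is cofiltered: taking every leg equal to $e$ gives a cone over any diagram.
\item The inclusion $F\colon I\to J$ is cofinal in the paper's sense.
\item Let $X\colon J\to\textbf{Set}$ have $X_*=\{0,1\}$ and $X_e$ constant at $0$. Then $\hom((X_j)_{J},Y)\cong\colim_{J^{op}}\hom(X_*,Y)\cong\{g\circ X_e\}\cong Y$, so $(X_j)_{j\in J}\cong\{0\}$, whereas $(X_{Fi})_{i\in I}$ is the representable $\{0,1\}$.
\item In this example $F\downarrow *$ is the discrete category on $(*,1)$ and $(*,e)$.
\item Your $v$ is not well defined here: the germs of $X_1=\mathrm{id}$ and $X_e$ in $\colim_i\hom(X_{Fi},X_*)=\hom(X_*,X_*)$ differ.
\end{itemize}

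\textbf{What is missing is an extra hypothesis.} You need: for every parallel pair $\beta,\beta'\colon Fi\rightrightarrows j$ there is $\delta\colon i'\to i$ in $I$ with $\beta\circ F\delta=\beta'\circ F\delta$. This holds automatically when $J$ is a preorder or $F$ is full. Apply it to the pair $\beta\circ F\gamma,\ \beta'\circ F\gamma'$ produced in step 1. This gives connectedness of $F\downarrow j$, and your finality-plus-Yoneda argument then goes through as written. The paper itself gives no proof of this lemma, only the citation.
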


\begin{prop}[{\cite[Proposition~8]{KAMENSKY2007180}}]\label{isomorphism is calculated in monster}
    A morphism in $\Pro(\defT(T))$ is an isomorphism if and only if its interpretation in some/any saturated model is a bijection. In other words, saturated models reflect isomorphisms.
\end{prop}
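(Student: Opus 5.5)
One direction is immediate: an isomorphism $f:X\to Y$ in $\Pro(\defT(T))$ has an inverse $g$ with $gf=\textup{id}_X$ and $fg=\textup{id}_Y$. Interpreting in any model $\M$ is functorial, since each morphism is a compatible family of definable functions, so $g(\M)$ is a two-sided inverse of $f(\M)$. Hence $f(\M)$ is a bijection. The work is the converse, and I will show that bijectivity in one saturated model already forces $f$ to be an isomorphism.

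So suppose $\U$ is saturated of size larger than the lengths of the types involved, and that $f(\U):X(\U)\to Y(\U)$ is a bijection. I would use Lemma~\ref{functions of Pro}, which identifies morphisms $Y\to X$ with subobjects of $Y\times X$ whose interpretation in $\U$ is the graph of a function. Let $\Gamma_f\le X\times Y$ be the type-definable graph of $f$. Concretely, it is the type $X(x)\cup\{f_j(x)=y_j \mid j\in J\}$, where $y=(y_j)_{j\in J}$ are the variables of $Y$. Its transpose $\Gamma_f^\circ\le Y\times X$ is again a type, hence a subobject of $Y\times X$ in $\Pro(\defT(T))$. Since $f(\U)$ is a bijection, $\Gamma_f^\circ(\U)$ is the graph of the set-theoretic inverse $Y(\U)\to X(\U)$. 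Lemma~\ref{functions of Pro} then produces a morphism $g:Y\to X$ whose graph is $\Gamma_f^\circ$.

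It remains to check $gf=\textup{id}_X$ and $fg=\textup{id}_Y$ in $\Pro(\defT(T))$. Both composites agree with the identities on $\U$-points. Two parallel morphisms $h,k:X\to Z$ are equal as soon as $h(\U)=k(\U)$: equality of morphisms means $T\cup X(x)\vdash h_j(x)=k_j(x)$ for each $j$, and if this failed, the partial type $X(x)\cup\{h_j(x)\neq k_j(x)\}$ would be consistent. It uses only small many parameters-free formulas, so it would be realised in the saturated $\U$, contradicting $h(\U)=k(\U)$. This is the faithfulness of interpretation in $\U$, and it also shows that $g$ is determined by its graph.

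For ``some/any'': any saturated model of sufficiently large cardinality works by the argument above. If $f$ is bijective in some saturated model, the converse gives that $f$ is an isomorphism, and then the forward direction gives bijectivity in every model.

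The main obstacle is the size condition in the converse. Saturation must exceed the length of the variable tuples of $X$ and $Y$, or else realising the partial type (and applying Lemma~\ref{functions of Pro}) fails. I would handle this with the standing convention that types are small and $\U$ is $\kappa$-saturated. One should also check the degenerate object $\bot$, where $X(\U)=\emptyset$. A bijection then forces $Y(\U)=\emptyset$, so $Y$ is inconsistent and hence isomorphic to the initial object $\bot$.
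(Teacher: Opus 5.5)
Your forward direction matches the paper's remark: interpretation in a model is functorial, so isomorphisms go to bijections. Your converse is a formally valid deduction from Lemma~\ref{functions of Pro}, together with the faithfulness of interpretation in a sufficiently saturated $\U$, which you prove correctly.

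However, this reduction moves all of the substance into Lemma~\ref{functions of Pro}, and that lemma is equivalent to the proposition you are proving. In the other direction, take a type $Z\le X\times Y$ that is a graph on $\U$. Apply the proposition to the projection $Z\to X$, which is a bijection on $\U$, and compose its inverse with $Z\to Y$. That is the natural order. The numbering in the cited source (Proposition~8, then Corollary~9) suggests it is also the order in which these facts are established there. So if Lemma~\ref{functions of Pro} is taken on the authority of the citation, your proof is circular.

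What is then missing is exactly the step the paper's remark alludes to (compactness on formulas with negation): why the set-theoretic inverse $f(\U)^{-1}$ is given by a compatible family of \emph{definable} maps. The size condition you call the main obstacle is only bookkeeping; this is the real one.

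A self-contained converse runs as follows.

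By Lemma~\ref{level morphisms}, write $f=\lim_{i\in I}(f_i:X_i\to Y_i)$, with projections $p_i:X\to X_i$, $q_i:Y\to Y_i$ and transition maps $\pi^X_{ki},\pi^Y_{ki}$ for $k\to i$. Fix $i$.

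Injectivity of $f(\U)$ says that the partial type $X(x)\cup X(x')\cup\{f_j(p_jx)=f_j(p_jx')\mid j\in I\}\cup\{p_i(x)\neq p_i(x')\}$ has no realisation in $\U$, so it is inconsistent. Compactness plus cofilteredness then give some $k\to i$ with
\[
T\vdash\forall x,x'\in X_k\ \bigl(f_k(x)=f_k(x')\rightarrow\pi^X_{ki}(x)=\pi^X_{ki}(x')\bigr).
\]
Thus $\pi^X_{ki}$ factors through $f_k$ as a definable map $g_i:f_k(X_k)\to X_i$.

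Surjectivity of $f(\U)$ says that $Y(y)\cup\{\neg\exists x\,(X_k(x)\wedge f_k(x)=q_k(y))\}$ has no realisation. So there is $l\to k$ with $\pi^Y_{lk}(Y_l)\subseteq f_k(X_k)$.

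The definable maps $h_i:=g_i\circ\pi^Y_{lk}:Y_l\to X_i$ satisfy $h_i\circ q_l=p_i\circ f(\U)^{-1}$ on $Y(\U)$. By your faithfulness argument they form a compatible family, i.e.\ a morphism $g:Y\to X$, and $gf=\textup{id}_X$, $fg=\textup{id}_Y$.

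Both compactness steps use negated formulas, in line with the paper's remark that the statement fails for positive theories. Alternatively, you can keep your proof and establish Lemma~\ref{functions of Pro} directly by a compactness argument of the same kind.
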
 

\begin{remark}
    Any model $\M: \defT(T)\rightarrow \textbf{Set}$ extends uniquely to a continuous functor $\M: \Pro(\defT(T))\rightarrow \textbf{Set}$. Hence, an isomorphism of $\Pro(\defT(T))$ is mapped to an isomorphism of \textbf{Set}, which is a bijection. The other direction requires a use of compactness on formulas with negation. In particular, the proposition does not hold if $T$ is merely a positive theory. 
\end{remark}

\begin{example}
    We show that non-saturated models might not reflect isomorphisms. Let $\mathcal{L} = \{c_n \ | \ n\in\omega\}$ be a countably infinite set of constants. Let $T$ be the theory which says each $c_n$ is distinct. Then, $p := \{ x \neq c_n \ | \ n\in\omega \}$ is a consistent type. Consider the empty map into $p$.

    Let $\M$ be the prime model, where every element is named by some $c_n$, so that $p$ is not realised. Then, the empty map is a bijection. It is not an isomorphism: using the proposition above, a saturated model realises $p$, so that the empty map is not a bijection.
\end{example}

The following lemma allows us to ignore the differences between categorical limits and set-theoretic intersections.

\begin{lem}\label{level intersections}
    Let $f:X\rightarrow Y$ be a morphism in $\Pro(\defT(T))$. Then, there are formulas $\phi_i,\psi_i$ and definable functions $g_i:\phi_i\rightarrow\psi_i$, such that in any model $\M\models T$, we have $X(\M)=\bigcap \phi_i(\M)$, $Y(\M)=\bigcap \psi_i(\M)$, and $f(\M)=\bigcap g_i(\M)$.
\end{lem}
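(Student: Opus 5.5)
By Lemma~\ref{level morphisms}, $f$ is a level morphism, so we fix a cofiltered category $I$ and morphisms $f_i:X_i\rightarrow Y_i$ in $\defT(T)$, natural in $i\in I$, with $f\cong\lim_I f_i$ in the arrow category. Using the equivalence between $\Pro(\defT(T))$ and the category of types, we can read $X$ as the type in disjoint variables $(x_i)_{i\in I}$ generated by the formulas $X_i(x_i)$ together with the equations $\alpha(x_i)=x_j$ for transition maps $\alpha$; likewise for $Y$ in variables $(y_i)$. The task is to replace this ``limit'' presentation, in which elements are compatible families, by an ``intersection'' presentation, in which we simply intersect definable subsets of one fixed ambient product of sorts.

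To do this, we index the formulas by finite subdiagrams rather than single objects. For each finite full subcategory $F\subseteq I$, let $\phi_F$ be the formula in the finitely many variables $(x_i)_{i\in F}$ asserting $\bigwedge_{i\in F}X_i(x_i)$ together with $\alpha(x_i)=x_j$ for every arrow $\alpha:i\rightarrow j$ in $F$. Define $\psi_F$ similarly in the variables $(y_i)_{i\in F}$. Let $g_F:\phi_F\rightarrow\psi_F$ be given by $(x_i)_{i\in F}\mapsto(f_i(x_i))_{i\in F}$. This is well defined because naturality of the $f_i$ gives $\beta(f_i(x_i))=f_j(\alpha(x_i))=f_j(x_j)$ for each arrow $\alpha:i\rightarrow j$ in $F$, where $\beta$ is the corresponding transition map of $Y$.

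We regard each $\phi_F(\M)$ as a subset of the full product $\prod_{i\in I}X_i(\M)$, cylindrically, by placing no constraint on coordinates outside $F$; we treat $\psi_F$ and $g_F$ in the same way. Then in any model $\M$,
\[
\bigcap_F\phi_F(\M)
\]
is exactly the set of compatible families, and this is $X(\M)$ because a model, viewed as a continuous functor, preserves the cofiltered limit. The same holds for $Y$. Finally, the intersection of the graphs of the $g_F$ is the graph of the map $(x_i)\mapsto(f_i(x_i))$, which is $f(\M)$ since $\M$ also preserves limits in the arrow category. There are only small many finite $F$, so all of these are legitimate small intersections.

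The main obstacle is bookkeeping rather than mathematics: the $\phi_F$ live in different finite sets of variables, so ``$\bigcap$'' has to be read inside the ambient infinite product of sorts. We also have to make sure the graphs of the $g_F$, each a definable function on only finitely many coordinates, intersect to a graph rather than merely a relation. This works because every coordinate $i$ occurs in some $F$, and there $g_F$ fixes $y_i=f_i(x_i)$. Two further points need only a brief check. If some $X_i$ is empty, all the intersections are empty, which is consistent with the initial object $\bot$. The same argument also handles the identification of $X$ with the deductively closed type, since the interpretation depends only on the generating formulas.
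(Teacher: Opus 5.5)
Your argument is correct, but it gets there more directly than the paper does. After passing to a level representation $f=\lim f_i$, the paper enumerates \emph{all} consequences $\phi_\alpha$ of $X$ and $\psi_\beta$ of $Y$. It uses compactness to see that the formulas $X_i(\pi_i(x))$ and the $\phi_\alpha$ are mutually cofinal, and likewise for $Y$. It then defines each $g_i:\phi_{\alpha_i}\to\psi_{\beta_i}$ by routing through some $f_j$, and appeals to cofinality (Lemma~\ref{cofinal functor}) to get $\lim g_i\cong\lim f_i$.

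You instead take one explicit family of consequences, the finite-subdiagram conjunctions $\phi_F$, for which the maps $g_F=(f_i)_{i\in F}$ are explicit. You then check the three identities model by model, by unwinding the continuous extension of $\M$ to compatible families. This needs neither compactness nor the cofinality lemma, and the verification is purely set-theoretic.

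What the paper's phrasing buys is that it is stated in terms of formulas implied by the type $X$ in its own variables. That is how the lemma is used in Theorem~\ref{eliminate hyper iff pro exact}, where the $E_i$ must live in the variables of the given type. Your $\phi_F$ live in the auxiliary variables $(x_i)_{i\in I}$ of the canonical presentation of $\lim X_i$. In that application you would therefore pull back along the components of the isomorphism $X\cong\lim X_i$, which is harmless.

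One small repair is needed. Hom-sets of a cofiltered $I$ need not be finite, so a finite \emph{full} subcategory may carry infinitely many arrows, and then $\phi_F$ would not be a first-order formula. Index instead by finite subdiagrams, meaning finitely many objects together with finitely many arrows among them, as your prose already suggests. The intersection computation is unchanged.
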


\begin{proof}
    By Lemma \ref{level morphisms}, assume $f=\lim (f_i:X_i\rightarrow Y_i)$. Moreover, let $\phi_\alpha$ enumerate all the formulas which are consequences of $X$, each considered in the same set of variables as $X$. Likewise let $\psi_\beta$ enumerate the consequences of $Y$. By construction, $X=\bigcap \phi_\alpha$ and $Y=\bigcap \psi_\beta$ in any model.

    Given $i\in I$, the map $\pi_i: X\rightarrow X_i$ implies that $X_i(\pi_i(x))$ is one of the consequences of $X$, so is one of the $\phi_\alpha$. Conversely, given $\alpha$, compactness implies that there is some $i\in I$ such that $X_i(\pi_i(x))\vdash \phi_\alpha(x)$, so that there is a morphism $X_i\rightarrow\phi_\alpha$. This is true for $Y_i$ and $\psi_\beta$ as well. Hence, given $i\in I$, we form the following diagram, which commutes with the projections from $X$ and $Y$.
        \[\begin{tikzcd}
        	{} & \\
        	{\phi_{\alpha_i}:=X_j(\pi_j(x))} \\
        	{X_j} & {Y_j} \\
        	& {\psi_{\beta_i}} \\
        	{X_i} & {Y_i}
        	\arrow["{\pi_j}"', from=2-1, to=3-1]
        	\arrow["{f_j}", from=3-1, to=3-2]
        	\arrow[from=3-1, to=5-1]
        	\arrow[from=3-2, to=4-2]
        	\arrow[from=4-2, to=5-2]
        	\arrow["{f_i}"', from=5-1, to=5-2]
        \end{tikzcd}\]
    Let $g_i$ be the morphism $\phi_{\alpha_i}\rightarrow\psi_{\beta_i}$. Then as $(g_i)$ is cofinal in $(f_i)$, they have the same limit. 
\end{proof}

\medskip

\subsection{Regular epimorphisms}
We already know that $\Pro(\defT(T))$ is regular. We study its regular epimorphisms and obtain a characterisation of saturated models. We first recall the \emph{diagonal fill-in property} of a regular category.

\begin{prop}[{\cite[Proposition 2.4]{Butz1998}}]\label{diagonal fill in}
    In a regular category $\C$, given a commutative square
    \[\begin{tikzcd}
    	X & Y \\
    	{X'} & {Y'}
    	\arrow["f", from=1-1, to=1-2]
    	\arrow["e"', two heads, from=1-1, to=2-1]
    	\arrow["m", hook, from=1-2, to=2-2]
    	\arrow["g"', from=2-1, to=2-2]
    \end{tikzcd}\]
    where $e$ is a regular epimorphism and $m$ is a monomorphism, there is a unique arrow $d:X'\rightarrow Y$ such that the two triangles commute.
\end{prop}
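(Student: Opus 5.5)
We prove the diagonal fill-in in two stages: existence from the regular epi--mono factorisation of Lemma~\ref{basics of regular}, uniqueness from $e$ being an epimorphism. Uniqueness is quick: if $d,d'$ both satisfy $de=f=d'e$, then $d=d'$, since $e$ is a regular epimorphism and hence an epimorphism.

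For existence, I would not factor $f$ or $g$ directly. Instead, use that $e$ is the coequaliser of its kernel pair $(p,q):K\rightrightarrows X$, by part (1) of Lemma~\ref{basics of regular}. It then suffices to show $fp=fq$, because the universal property of the coequaliser gives a unique $d:X'\to Y$ with $de=f$. The check is a one-line computation:
\[ mfp = gep = geq = mfq, \]
using $mf=ge$ and $ep=eq$. Since $m$ is a monomorphism, we may cancel it to get $fp=fq$.

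It remains to verify the lower triangle $md=g$. We have $mde=mf=ge$, and since $e$ is an epimorphism we can cancel it on the right to obtain $md=g$. So both triangles commute.

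None of these steps is a real obstacle. The only point needing care is that we use part (1) of Lemma~\ref{basics of regular}, which holds in any regular category. We never need pullback-stability or the image factorisation, so the argument in fact works in any category in which regular epimorphisms are coequalisers of their kernel pairs.
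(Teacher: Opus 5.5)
Your proof is correct and is the standard argument; the paper gives no proof of its own and only cites Butz. You cancel $m$ to see that $f$ coequalises the pair coequalised by $e$, factor $f$ through $e$, and then cancel the epimorphism $e$ for both the lower triangle and uniqueness. As a small sharpening of your closing remark, you do not even need part (1) of Lemma~\ref{basics of regular}: running the same computation with any pair $(a,b)$ of which $e$ is a coequaliser by definition shows that the fill-in holds in an arbitrary category.
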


\begin{lem}\label{level regular epimorphism}
    Let $\mathcal{C}$ be regular. Then a morphism in $\Pro(\mathcal{C})$ is a regular epimorphism if and only if it is a cofiltered limit of regular epimorphisms in $\mathcal{C}$.
\end{lem}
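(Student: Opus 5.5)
We prove the two directions separately, using Lemma~\ref{level morphisms} to present every morphism of $\Pro(\C)$ as a level morphism $f\cong\lim(f_i:X_i\rightarrow Y_i)_{i\in I}$, together with the fact (Barr \cite[Theorem 1]{BARR1986113}) that $\Pro(\C)$ is regular with finite limits computed levelwise.

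\emph{($\Leftarrow$).} Suppose $f=\lim f_i$ with each $f_i$ a regular epimorphism in $\C$. First, the kernel pair of $f$ is the levelwise kernel pair: finite limits commute with cofiltered limits, so $X\times_Y X\cong\lim (X_i\times_{Y_i}X_i)$. In $\C$, each $f_i$ is the coequaliser of its kernel pair. Next we show that $f$ coequalises its kernel pair in $\Pro(\C)$. Given $g:X\rightarrow Z$ with $Z=(Z_k)$ coequalising the kernel pair, it suffices, by the hom formula $\hom(X,Z)\cong\lim_k\colim_i\hom(X_i,Z_k)$, to treat the case $Z\in\C$. Then $g$ factors through some $X_i$ by a map $g_i$. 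A priori $g_i$ only coequalises the kernel pair of $f_i$ after passing further down. By cofilteredness, and since the kernel pair is $\lim_j(X_j\times_{Y_j}X_j)$, the two composites $X_j\times_{Y_j}X_j\rightrightarrows X_i\rightarrow Z$ agree for some $j\rightarrow i$. Hence $g_j$ factors uniquely through $f_j$, giving $Y_j\rightarrow Z$, and compatibility or uniqueness follows from the fact that the $f_j$ are epimorphisms. Alternatively, and more cleanly, one can use the image factorisation: a morphism is a regular epimorphism iff its image is all of $Y$, and the image of a level morphism is $\lim \mathrm{im}(f_i)$. That route requires knowing that images in $\Pro(\C)$ are computed levelwise, which is exactly Barr's construction of the regular structure, so I would cite it.

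\emph{($\Rightarrow$).} Let $f$ be a regular epimorphism, written as a level morphism $\lim f_i$. Factor each $f_i=m_ie_i$ in $\C$ with $e_i:X_i\twoheadrightarrow W_i$ and $m_i:W_i\hookrightarrow Y_i$. By uniqueness of factorisations (Lemma~\ref{basics of regular}) and the diagonal fill-in (Proposition~\ref{diagonal fill in}), the $W_i$ form a cofiltered diagram indexed by $I$, and the $e_i,m_i$ are natural. Put $W=\lim W_i$, $e=\lim e_i$, $m=\lim m_i$. Then $m$ is a monomorphism, because limits of monomorphisms are monomorphisms (mono is a finite-limit condition, and the kernel pair is computed levelwise). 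Also $e$ is a regular epimorphism by ($\Leftarrow$). So $f=me$, where $f$ is a regular epimorphism and $m$ is a monomorphism. Since $f$ is an epimorphism, $m$ is as well. Moreover $m$ is a regular epimorphism, since the regular epimorphism $f$ factors through it: in a regular category, if $me$ is a regular epimorphism then so is $m$. By Lemma~\ref{basics of regular}(2), $m$ is an isomorphism. Hence $f\cong e=\lim e_i$, a cofiltered limit of regular epimorphisms.

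The main obstacle is the ($\Leftarrow$) direction, specifically verifying the coequaliser universal property across the pro-indexing. I would handle it by reducing to codomains in $\C$ and using the colimit formula for hom-sets together with cofilteredness to push the coequalising condition down to a single level.
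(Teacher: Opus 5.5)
Your proof is correct. Direction ($\Rightarrow$) is essentially the paper's argument. You factor each $f_i=m_ie_i$ levelwise through its image, use the diagonal fill-in to make $(W_i)$ a cofiltered diagram, and then show $\lim m_i$ is an isomorphism. The paper does that last step by applying the fill-in in $\Pro(\C)$ to the squares $m_i\circ(e_i p_i)=\pi_i\circ f$, which gives compatible maps $Y\to W_i$. You instead note that $\lim m_i$ is a monomorphism through which the regular epimorphism $f$ factors. Your appeal to ($\Leftarrow$) to know that $e$ is a regular epimorphism is not actually needed there.

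The genuine difference is in ($\Leftarrow$). The paper argues abstractly. Filtered colimits and finite limits in $\textup{Lex}(\C,\mathbf{Set})\simeq\Pro(\C)^{op}$ are both computed pointwise, so cofiltered limits commute with finite colimits in $\Pro(\C)$. Hence a cofiltered limit of coequaliser diagrams is a coequaliser. You instead verify the coequaliser property by hand from $\hom(X,Z)\cong\lim_k\colim_i\hom(X_i,Z_k)$. You reduce to $Z\in\C$, descend to a level $j$ where $g_j$ coequalises $X_j\times_{Y_j}X_j$, and get uniqueness from the $f_j$ being epimorphisms together with cofilteredness. This is the same underlying fact (filtered colimits commute with finite limits in $\mathbf{Set}$) unwound elementwise. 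Your version is more self-contained and makes explicit that each $f_i$ remains a coequaliser in $\Pro(\C)$, which the paper leaves implicit. The paper's version is shorter and yields the general commutation of cofiltered limits with all finite colimits.

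Your suggested alternative via images is not independent. Identifying $\lim\mathrm{im}(f_i)$ as the image of $\lim f_i$ already presupposes that $\lim e_i$ is a regular epimorphism, which is the ($\Leftarrow$) statement itself. So keep the direct argument.
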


\begin{proof}
    $(\Rightarrow)$ \quad The dual statement was proved in \cite[Theorem 1]{BARR1986113}. Let $f=\lim f_i$. Given $i$, we may factorise $f_i$ into a regular epimorphism $e_i$ followed by a monomorphism $m_i$. Denote by $W_i$ the image of this factorisation. By the diagonal fill-in property in Proposition~\ref{diagonal fill in}, we have a unique morphism $Y\rightarrow W_i$, which exhibits $Y$ as the limit of $W_i$.
        \[\begin{tikzcd}
            X & {X_i} & {W_i} \\
            Y && {Y_i}
            \arrow[from=1-1, to=1-2]
            \arrow["f"', two heads, from=1-1, to=2-1]
            \arrow["{e_i}", two heads, from=1-2, to=1-3]
            \arrow["{m_i}", hook, from=1-3, to=2-3]
            \arrow[dashed, from=2-1, to=1-3]
            \arrow[from=2-1, to=2-3]
        \end{tikzcd}\]
    Then, $f$ is the cofiltered limit of $e_i$. This shows that all regular epimorphisms in $\Pro(\C)$ are cofiltered limits of regular epimorphisms in $\C$.

\smallskip

    $(\Leftarrow)$ \quad  In \textbf{Set}, filtered colimits commute with finite limits. Since limits and colimits of presheaves are calculated pointwise, this holds also for $\textup{Lex}(\mathcal{C},\textbf{Set}) \simeq \Pro(\mathcal{C})^{op}$. Dualising, in $\Pro(\mathcal{C})$ cofiltered limits commute with finite colimits. In particular, cofiltered limits of regular epimorphisms are regular epimorphisms. 
\end{proof}

Warning: it is not true that if $(f_i)$ represents a regular epimorphism $f$, then each $f_i$ is a regular epimorphism. Instead, the lemma says that we can find some representing $(e_i)$, each of which is a regular epimorphism.

\smallskip

The above proof also gives a description of the regular epimorphism-monomorphism factorisation in $\Pro(\mathcal{C})$. Let $f=\lim f_i$. Then, one factorises each $f_i = m_i \circ e_i$ through its image $W_i$. Then, using the diagonal fill-in property, whenever we have $j\rightarrow i$, there is an induced $W_j \rightarrow W_i$. Hence, we can take the limit $\lim W_i$, which gives precisely the image of $f$ with the regular epimorphism $\lim e_i$ and the monomorphism $\lim m_i$. This was first shown in \cite{BARR1986113}. For a more modern account, see \cite[Theorem 12]{kanalas2025puremapsstrictmonomorphisms}.
\[\begin{tikzcd}
	X & {\lim W_i} & Y \\
	{X_j} & {W_j} & {Y_j} \\
	{X_i} & {W_i} & {Y_i}
	\arrow["{\lim e_i}"', dashed, two heads, from=1-1, to=1-2]
	\arrow["f", curve={height=-12pt}, from=1-1, to=1-3]
	\arrow[from=1-1, to=2-1]
	\arrow["{\lim m_i}"', dashed, hook, from=1-2, to=1-3]
	\arrow[from=1-2, to=2-2]
	\arrow[from=1-3, to=2-3]
	\arrow["{e_j}"', two heads, from=2-1, to=2-2]
	\arrow[from=2-1, to=3-1]
	\arrow["{m_j}"', hook, from=2-2, to=2-3]
	\arrow[dashed, from=2-2, to=3-2]
	\arrow[from=2-3, to=3-3]
	\arrow["{e_i}"', two heads, from=3-1, to=3-2]
	\arrow["{m_i}"', hook, from=3-2, to=3-3]
\end{tikzcd}\]
With $\mathcal{C} = \defT(T)$, the image of a morphism $f:X\rightarrow Y$ is given by the type 
\[Y \cup \{\exists x \ (X_{i_j}(x) \wedge f_j(x)=y ) \ | \ j\in J\}.\]

\smallskip

We now recall the notion of $\lambda$-regularity.

\begin{defin}[{\cite[Definition 2.2]{MAKKAI1990225}}]
    Let $\lambda$ be a regular cardinal. 
\begin{enumerate}
    \item A category $\C$ is $\lambda$-\textit{regular} (resp. $\infty$-\emph{regular}) if it is regular, has $\lambda$-limits (resp. small limits), and regulars epimorphisms are preserved by $\lambda$-products (resp. small products).
    \item  A category $\C$ is $\lambda$-\textit{exact} (resp. $\infty$-\emph{exact}) if it is $\lambda$-regular (resp. $\infty$-regular) and is exact.
    \item  A functor between $\lambda$-regular categories is $\lambda$-\emph{regular} (resp. $\infty$-\emph{regular}) if it preserves regular epimorphisms and $\lambda$-limits (resp. small limits)
\end{enumerate}
\end{defin}

\begin{thm}\label{pro is infty regular}
    Let $\C$ be regular. Then $\Pro(\C)$ is $\infty$-regular.
\end{thm}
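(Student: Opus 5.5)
We need to show three things: $\Pro(\C)$ is regular, it has all small limits, and small products of regular epimorphisms are regular epimorphisms. Regularity is already known from \cite[Theorem 1]{BARR1986113}, recorded in the Corollary above for $\defT(T)$; the argument there works for any regular $\C$.

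For small limits, the plan is as follows.
\begin{itemize}
\item $\Pro(\C)$ has cofiltered limits by construction.
\item It has finite limits, because it is regular.
\item A category with finite limits and cofiltered limits has all small limits. Concretely, a product $\prod_{k\in K} X_k$ is the cofiltered limit over finite subsets $F\subseteq K$ of the finite products $\prod_{k\in F}X_k$. General limits are then equalisers of products.
\end{itemize}
Alternatively, when $\C$ is lex one can use $\Pro(\C)\simeq\textup{Lex}(\C,\textbf{Set})^{op}$. Lex functors are closed under small colimits in the presheaf category, computed as filtered colimits of finite colimits, so the opposite category has limits. I would use the first, elementary argument.

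The main step is preservation of regular epimorphisms under small products. Let $f_k:X_k\to Y_k$, for $k\in K$, be regular epimorphisms in $\Pro(\C)$. For each finite $F\subseteq K$, the finite product $f_F:=\prod_{k\in F}f_k$ is a regular epimorphism, since regular categories have pullback-stable regular epimorphisms. Indeed, $f\times g=(f\times 1)\circ(1\times g)$, each factor is a pullback of a regular epimorphism along a projection, and regular epimorphisms compose in a regular category. Then $\prod_K f_k=\lim_{F} f_F$ is a cofiltered limit of regular epimorphisms in the arrow category. It remains to show that cofiltered limits of regular epimorphisms in $\Pro(\C)$ are regular epimorphisms.

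This last point is where care is needed. In the $(\Leftarrow)$ direction of Lemma~\ref{level regular epimorphism}, one dualises the fact that in $\textup{Lex}(\C,\textbf{Set})$ filtered colimits commute with finite limits. Regular monomorphisms there are equalisers, and a filtered colimit of equaliser diagrams is the equaliser of the colimit diagram. Dually, in $\Pro(\C)$ a cofiltered limit of coequaliser diagrams is a coequaliser diagram. So I would take each $f_F$ as the coequaliser of its kernel pair. Kernel pairs are finite limits and commute with the limit, so the diagrams $\ker(f_F)\rightrightarrows X_F\to Y_F$ form a cofiltered system of coequaliser diagrams. Their limit is $\ker(\prod f_k)\rightrightarrows \prod X_k\to\prod Y_k$, which is therefore a coequaliser.

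If one prefers to avoid the $\textup{Lex}$ description for non-lex $\C$, there is a second route. Use Lemma~\ref{level regular epimorphism} to write each $f_F$ as a cofiltered limit of regular epimorphisms in $\C$, and reindex the combined system over pairs $(F,i)$ into a single cofiltered diagram of regular epimorphisms of $\C$. Then apply Lemma~\ref{level regular epimorphism} again. The obstacle in this route is checking that the reindexed diagram is genuinely cofiltered and compatible, which I would handle via level representations (Lemma~\ref{level morphisms}) and cofinality (Lemma~\ref{cofinal functor}).
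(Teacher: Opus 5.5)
Your proposal is correct and follows essentially the same route as the paper: write $\prod_K f_k$ as the cofiltered limit over finite subsets of the finite products $f_F$, which are regular epimorphisms because $\Pro(\C)$ is regular, and conclude using the fact that cofiltered limits commute with finite colimits in $\Pro(\C)$ (the $(\Leftarrow)$ direction of Lemma~\ref{level regular epimorphism}). Your kernel-pair argument makes explicit the compatibility of the coequaliser diagrams, which the paper leaves implicit, and your \emph{second route} for non-lex $\C$ is unnecessary, since a regular category has finite limits by definition.
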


\begin{proof}
    We already know that $\Pro(\C)$ is regular and complete. Moreover, a cofiltered limit of regular epimorphisms is a regular epimorphism. Since $\C$ has finite limits, every small product in $\Pro(\C)$ is a cofiltered limit of representibles. Hence, a product of regular epimorphisms is a cofiltered limit of regular epimorphisms, which is itself a regular epimorphism.

    More explicitly, let $f_i: X_i \rightarrow Y_i$ be regular epimorphisms for each $i\in I$, where $I$ is an infinite but small set. Let $J$ be the poset category of non-empty finite subsets of $I$. Then $J^{op}$ is a cofiltered category. For each $j=\{i_1,\ldots,i_n\}\in J$, let $g_j = \prod_{k=1}^{n} f_{i_k}$. Then, $\lim_{j\in J^{op}} g_j = \prod_{i\in I} f_i$.
\end{proof}

\begin{remark}
    The Axiom of Choice is equivalent to the statement that, in \textbf{Set}, infinite products of regular epimorphisms are regular epimorphisms. Theorem~\ref{pro is infty regular} says that the pro-completion of any regular category satisfies the Axiom of Choice in this sense.
\end{remark}

\medskip

\subsection{Saturation}\label{subsect_saturation}

We take a brief detour to obtain a characterisation of saturation. We first give a motivating example.

\begin{example}
    Recall that an object $X$ is \textit{inhabited} if the unique morphism $X\rightarrow \top$ into the terminal object is a regular epimorphism. In $\defT(T)$, an object is not inhabited if and only if it is $\bot$. We prove this also holds in $\Pro(\defT(T))$. It is clear that the morphism $\bot\rightarrow \top$ is not a regular epimorphism in $\Pro(\defT(T))$. Conversely, suppose $f:X\rightarrow \top$ is not a regular epimorphism in $\Pro(\defT(T))$. Note that by Lemma~\ref{functions of Pro}, we have $X\cong\bot$ if and only if there is a morphism $X\rightarrow\bot$. Now, $f$ is represented by some $f_i:X_i\rightarrow \top$. If all the $f_i$ are regular epimorphisms, then $f$ is also a regular epimorphism by Lemma~\ref{level regular epimorphism}. Hence, there exists some $i_0$ such that $f_{i_0}$ is not a regular epimorphism, so in fact $X_{i_0} \cong \bot$. There is a projection map $X\rightarrow X_{i_0}$, so in fact $X\cong\bot$.

    In $\defT(T)$, an inhabited object is always non-empty in any model, but in $\Pro(\defT(T))$, an inhabited object can be empty in a model by the omitting type theorem. Categorically, a model $\M$ is a coherent functor $\defT(T)\rightarrow\textbf{Set}$; in particular $\M$ preserves regular epimorphisms in $\defT(T)$. Now, $\M$ admits a unique extension to a continuous functor $\Pro(\defT(T))\rightarrow \textbf{Set}$, but this extension might not be preserve regular epimorphisms in $\Pro(\defT(T))$. 
\end{example}

A $\kappa$-saturated model is a model which realises all types with sets of parameters strictly smaller than $\kappa$. Accordingly, we need a way of measuring the size of a set of parameters. 

\begin{defin}
    The \textit{length} of an object $X\in\Pro(\C)$, denoted by $l(X)$, is the least cardinality $|I|$ such that there is a monomorphism $X\hookrightarrow \prod_{i\in I}Y_i$, where $Y_i\in\C$.

    The length of a morphism in $\Pro(\C)$ is the length of its codomain.
\end{defin}

\begin{lem}
    Suppose $X\in\Pro(\defT(T))$ has infinite length. Then $X$ is isomorphic to a type $p(x)$, where $l(X)=|x|$.
\end{lem}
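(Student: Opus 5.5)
Fix a monomorphism $m: X\hookrightarrow \prod_{i\in I} Y_i$ with $|I| = l(X)$ infinite and each $Y_i\in\defT(T)$, say $Y_i=[\psi_i(y_i)]$ with $y_i$ a finite tuple of variables. The product $\prod_{i\in I}Y_i$ is the type in the variables $y=(y_i)_{i\in I}$ given by the deductive closure of $\{\psi_i(y_i)\mid i\in I\}$. Since $I$ is infinite and each $y_i$ is finite, $|y| = |I|$. I would then take $p(y)$ to be the type corresponding to the image of $m$: by Lemma~\ref{level morphisms} and the explicit description of images in $\Pro(\defT(T))$ given after Lemma~\ref{level regular epimorphism}, $p(y)$ is the type
\[\textstyle\prod Y_i \cup \{\exists x\,(X_{i_j}(x)\wedge m_j(x)=y)\mid j\in J\},\]
which is again a type in the variables $y$.

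The plan is then to show $X\cong p$. Write $m$ as the composite of the regular epimorphism $e: X\to p$ followed by the monomorphism $p\hookrightarrow\prod Y_i$. Since $m$ is monic, so is $e$. By Lemma~\ref{basics of regular}(2), a morphism that is both a regular epimorphism and a monomorphism is an isomorphism, so $X\cong p$, and $p$ lives in $|y|=|I|=l(X)$ variables. (Alternatively, one could check via Proposition~\ref{isomorphism is calculated in monster} that $e$ is a bijection in a saturated model, but the categorical argument avoids this.)

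The only delicate point is the bookkeeping that the image really is presented as a type in exactly the variables $y$, rather than in the variables of $X$, together with the cardinality computation $|y|=|I|$. The latter uses that $I$ is infinite, which is why the hypothesis of infinite length is needed; in the finite case a finite tuple could be padded differently. It is also worth noting, though it is not needed for the statement, that the resulting equality $|y|=l(X)$ is automatic: $|y|\ge l(X)$, since $p$ embeds into a product indexed by the variables, and $|y|\le|I|=l(X)$ by construction.
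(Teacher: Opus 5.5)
Your proposal is correct and matches the paper's proof: both take the regular epimorphism--monomorphism factorisation of $m\colon X\hookrightarrow\prod_{i\in I}Y_i$, observe that the image is a type in the product's variables $(y_i)_{i\in I}$, and conclude $X\cong p$ because the first factor is both a regular epimorphism and a monomorphism. Your extra bookkeeping is a small improvement on the paper, which leaves the count implicit: the bounds $l(X)\le|y|\le|I|$ give $|y|=l(X)$ even if some $Y_i$ lie in zero variables.
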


\begin{proof}
    Let $\lambda=l(X)$, so that there is a monomorphism $m:X\hookrightarrow\prod_{\alpha<\lambda} Y_\alpha$, where $Y_\alpha\in\defT(T)$ is a formula $\psi_\alpha(y_\alpha)$. Let $p(x)$ be the image of the regular epimorphism-monomorphism factorisation of $m$. Since $m$ is monomorphism, $X$ is isomorphic to $p(x)$. Moreover, the image is in the same tuple of variable as $\prod_{\alpha<\lambda} Y_\alpha$, which is $(y_\alpha)_{\alpha<\lambda}$. This has length $\lambda$. 
\end{proof}

\begin{remark}
    An object with finite length in fact has length 1. Not every type in a finite tuple of variables can be expressed as a type in a single variable. However, this is not a big problem: every type in a finite tuple of variables can be expressed as a type in a single variable in $T^{eq}$.
\end{remark}

\begin{remark}\label{cocompact}
    Another way of measuring the size of an object $X$ in a category (for example $\Pro(\C)$) is by considering whether the functor $\Hom(-,X)$ preserves $\lambda$-cofiltered limits. If so, then $X$ is called $\lambda$-\emph{cocompact}. The \emph{size} of $X$ is the least $\lambda$ such that $X$ is $\lambda$-cocompact. This roughly corresponds to the cardinality of the smallest cofiltered diagram landing in $\C$, whose limit in $\Pro(\C)$ is $X$. We say an object is \emph{cocompact} if it is $\aleph_0$-cocompact. By \cite[Theorem 7.2]{DanielC2002}, an object is cocompact if and only if it is isomorphic to an object in $\C$. We write $\textup{Cocompact}(\C)$ for the full subcategory of $\C$ consisting of the cocompact objects. Then, we can recover $\C$ from $\Pro(\C)$ uniquely via $\textup{Cocompact}(\Pro(\C)) \simeq \C$. Specialising to $\defT(T)$, an object is cocompact if and only if it is an isolated type.
    
    The size matches with the length when they are larger than $|T|$. However, when they are smaller, they can be different. For example, a non-isolated type in a finite tuple has length 1, but infinite size.
\end{remark}

\smallskip

We show that saturation is equivalent to preservation of regular epimorphisms in $\Pro(\defT(T))$.

\begin{thm}
    A model $\M$ is $\lambda$-saturated, if and only if the functor $\M:\Pro(\defT(T))\rightarrow \textbf{Set}$ maps regular epimorphisms with length strictly less than $\lambda$ to surjections. 
\end{thm}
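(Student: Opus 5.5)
We prove the two implications separately. In both directions the translation between regular epimorphisms and types comes from Lemma~\ref{level regular epimorphism} together with the explicit description of images in $\Pro(\defT(T))$ given after it.

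\emph{($\Rightarrow$)} Suppose $\M$ is $\lambda$-saturated and let $f:X\rightarrow Y$ be a regular epimorphism with $l(Y)<\lambda$.
\begin{enumerate}
\item If $l(Y)$ is infinite, the preceding lemma lets us replace $Y$ by a type $q(y)$ with $|y|<\lambda$. If $l(Y)=1$, we embed $Y$ into a single definable set, so $y$ is a finite tuple.
\item By Lemma~\ref{level regular epimorphism} we write $f=\lim e_i$ with each $e_i:X_i\rightarrow Y_i$ a regular epimorphism in $\defT(T)$, hence provably surjective by Proposition~\ref{regular defT}.
\item Fix $b\in Y(\M)$. We must realise in $\M$ the partial type
\[\Sigma(x)=X(x)\cup\{e_i(\pi_i(x))=\pi_i(b) \mid i\in I\}.\]
\item Finite satisfiability follows from cofilteredness: finitely many conditions are implied by a single index $k$, and surjectivity of $e_k$ (which $\M$ satisfies) produces a witness in $X_k(\M)$.
\end{enumerate}

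The main obstacle in this direction is the size of $\Sigma$. Two things are large: the variable tuple $x$ may be long, and the parameters $\pi_i(b)$ are at most $|y|<\lambda$ coordinates of $b$. Saturation only controls types in few variables over fewer than $\lambda$ parameters, so I would handle the variables as follows.
\begin{itemize}
\item Use Lemma~\ref{level intersections} to present $X$ as an intersection of formulas in its own variables.
\item Realise the type coordinate by coordinate, by transfinite induction over the variables of $x$. Each stage adds a single variable over fewer than $\lambda$ parameters, namely $b$ together with the coordinates built so far.
\end{itemize}
There is a flaw here that I cannot resolve at the level of this plan. Once $|x|\geq\lambda$, the parameters accumulated at later stages reach $\lambda$, so the induction cannot run. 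The standard fact is only that a $\lambda$-saturated model realises types in at most $\lambda$ variables over fewer than $\lambda$ parameters. So the induction does not apply directly when the domain $X$ has length larger than $\lambda$.

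I would try to repair this by shrinking the domain. Replace $X$ by a subobject of small length that still maps regularly onto $Y$: choose a subdiagram of the $e_i$ of size at most $|y|+|T|$ that is cofinal over the codomain side, and pull back. If this cannot be made to work, the correct reading is probably that the length of a morphism should also bound its domain, and I would adopt that interpretation.

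\emph{($\Leftarrow$)} Suppose $\M$ sends regular epimorphisms of length less than $\lambda$ to surjections. Let $A\subseteq\M$ with $|A|<\lambda$ and let $p(x)$ be a consistent type over $A$.
\begin{enumerate}
\item Write $p(x)=r(x,a)$, where $a$ enumerates $A$, and let $s(y)=\tp(a)$.
\item Form the type $Z(x,y)$ consisting of $r(x,y)\cup s(y)$, with the projection $Z\rightarrow s$.
\item This projection is a regular epimorphism. The image description shows that its image is the type $s(y)\cup\{\exists x\,\phi(x,y)\}$, where $\phi$ ranges over finite conjunctions from $r$. Each such formula lies in $s$ because $p$ is consistent, so the image is all of $s$.
\item The codomain $s$ lives in the variables $y$ with $|y|=|A|<\lambda$, so the projection has length less than $\lambda$.
\item By hypothesis $Z(\M)\rightarrow s(\M)$ is surjective. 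Since $a\in s(\M)$, there is some $c$ with $(c,a)\in Z(\M)$, and this $c$ realises $p$.
\end{enumerate}
This direction is routine.
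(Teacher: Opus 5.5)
Your ($\Leftarrow$) direction is the paper's argument. The paper checks that the projection onto $\tp(A)$ is a regular epimorphism by writing it as a cofiltered limit of the provably surjective maps $[\phi(x,y_A)\wedge\psi(y_A)]\to[\psi(y_A)]$ rather than through the image formula, but this is the same check. For ($\Rightarrow$) you build the paper's type $\Sigma$ with the paper's consistency argument.

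The gap is the step you flagged, and your proposed repair cannot close it. With the usual notion of $\lambda$-saturation (realising $1$-types, hence types in at most $\lambda$ variables, over fewer than $\lambda$ parameters), the forward implication is false when length is measured on the codomain only. Here is a counterexample:
\begin{itemize}
\item Let $T$ be the theory of an infinite set in the empty language, and let $\M\models T$ with $|\M|=\lambda$. This $\M$ is $\lambda$-saturated in the usual sense.
\item Let $X$ be the type $\{x_\alpha\neq x_\beta\mid \alpha<\beta<\lambda^+\}$.
\item Then $X\to\top$ is the cofiltered limit, over finite $F\subseteq\lambda^+$, of the provably surjective maps $[\bigwedge_{\alpha,\beta\in F,\,\alpha\neq\beta}x_\alpha\neq x_\beta]\to\top$. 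By Lemma~\ref{level regular epimorphism} it is a regular epimorphism, and its length is at most $1$.
\item But $X(\M)=\emptyset$.
\end{itemize}
So no shrinking of the domain can help. A point of a subobject of $X$ is already a point of $X$. Restricting to a subdiagram $J$ of the $e_i$ only produces a point of $\lim_J X_j$, through which $f$ factors, and such a point need not lift to $X(\M)$.

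The paper's proof is exactly your set-up followed by ``since $\M$ is $\lambda$-saturated, the type is realised''. That step is legitimate only under the paper's literal definition. There, $\lambda$-saturated means realising every type over fewer than $\lambda$ parameters, where a type is, in the paper's sense, a set of formulas in an arbitrary small tuple of variables.

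Under that reading, your ($\Rightarrow$) is finished as soon as $\Sigma$ is shown finitely satisfiable in $\M$; you need no transfinite induction and no bound on $|x|$. Your ($\Leftarrow$) never restricts $|x|$, so it yields precisely this strong property, and the theorem holds as stated.

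Under the usual reading, your fallback is the right correction of the statement: require $l(X)<\lambda$ as well (even $l(X)\leq\lambda$ suffices). Then replace $X$ by an isomorphic type in at most $\lambda$ variables, as you did for $Y$, and $\Sigma$ is realised by your coordinatewise induction. The converse only ever uses the projection $Z\to s$ with $x$ finite, whose domain also has length less than $\lambda$.

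Either way, you should commit to one of these two readings rather than attempting the repair.
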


\begin{proof}
    $(\Rightarrow)$ \quad  Let $f=\lim (f_i: X_i\rightarrow Y_i)$ be a morphism $X\rightarrow Y$ in $\Pro(\defT(T))$, where $l(Y)<\lambda$. By Lemma \ref{level regular epimorphism}, assume all $f_i$ are regular epimorphisms. We write $p_i$ for the projection $X\rightarrow X_i$ and $\pi_i$ for the projection $Y\rightarrow Y_i$.

    Let $b\in Y(\M)$ realise $Y$ in $\M$, with $|b|\leq l(Y)<\lambda$. Then, the type $X\cup\{f_i(p_i(x))=\pi_i(b) \ |\ i\in I \}$ is consistent, since each $f_j$ is a surjection. Since $\M$ is $\lambda$-saturated, the type is realised by some $a\in X(\M)$. It is clear that $f^\M(a)=b$.

\smallskip

    $(\Leftarrow)$ \quad Let $p(x)\in S(A)$ be a type over some parameter set $A\subset \M$, where $|A|<\lambda$. Let $X(x,y_A)$ be the type $p'\cup \tp(A)$, where $\tp(A)$ is in the variables $y_A$ and $p'$ is obtained from $p$ by replacing $A$ with variables $y_A$. Then, that $\M$ realises $p(x)$ will follow once we show the surjectivity of the projection $\pi_A: X\rightarrow \tp(A)$. Note that $l(\pi_A) = l(\tp(A)) \leq |A| < \lambda$. It suffices to show that $\pi_A$ is a cofiltered limit of regular epimorphisms.

    The map $\pi_A$ is a cofiltered limit of the maps $[\phi(x,y_A) \wedge \psi(y_A)] \rightarrow [\psi(y_A)]$ for each $\phi\in p'$ and $\psi\in\tp(A)$. By the completeness of $T$, we have
    \[T \vdash \forall y_A \ (\psi(y_A) \rightarrow \exists x \ \phi(x,y_A)).\]
    Hence, these maps are indeed regular epimorphisms.
\end{proof}

\begin{remark}
    A coherent functor from $\Pro(\defT(T))$ to \textbf{Set} is therefore a $\lambda$-saturated model for all small $\lambda$. Monster models are precisely the homogeneous coherent functors. 

    Another way of characterising saturated models categorically is via a lifting property. $\M$ is $\lambda$-saturated if and only if for all models $\N\rightarrow \N'$, where $|\N'|< \lambda$, and elementary embedding $\N\rightarrow \M$, there is a lift $\N'\rightarrow \M$.
    \[\begin{tikzcd}
        {\N'} & \M \\
        \N
        \arrow[dashed, from=1-1, to=1-2]
        \arrow[from=2-1, to=1-1]
        \arrow[from=2-1, to=1-2]
    \end{tikzcd}\]
\end{remark}

We give an example to illustrate why saturated models are relevant.

\begin{example}[Infinitary Chinese remainder theorem]
    Let $T$ be the theory of $(\mathbb{Z},0,1,+,\times)$. For every prime $p$, the equivalence relation $\textup{mod } p$, as well as its quotient $\mathbb{Z}_p$ and quotient map $\pi_p$ is definable. Let $\pi = \langle \pi_p \rangle_p :\M\rightarrow \prod_p \mathbb{Z}_p$. This map is a cofiltered limit of regular epimorphisms $\pi_p$, so is a regular epimorphism. However, the interpretation in the standard model is not a surjection; for example, the tuple $(1,1,\ldots)$ does not have a pre-image. In a non-standard model $\M$, the tuple does have a pre-image whenever $\M$ realises the type $\{ x = 1 \textup{ mod } p\ |\ p \textup{ prime}\}$.
\end{example}

%% file: 4elimination_of_hyperimaginaries.tex
\bigskip

\section{Hyperimaginaries}\label{hyperimaginaries}
In this section, we review some basics about hyperimaginaries. As we said in the Introduction, hyperimaginaries are important for the study of simple unstable theories. For standard textbook references, see \cites[\S3\addsemicolon]{alma992983361725701631}[\S\S15, 18]{casanovas2011simple}. 

\begin{defin}
    A theory $T$ \textit{eliminates hyperimaginaries} if for every type-definable equivalence relation $E$ on a type $X$, there is a sequence of definable equivalence relations $E_i$ on definable sets $X_i$, such that $\bigcap_i E_i \upharpoonright_X = E$.
\end{defin}

The following two propositions show that instead of working with $T$, we may work over $T^{eq}$.

\begin{prop}\label{EHI invariant under eq}
    A theory $T$ eliminates hyperimaginaries if and only if $T^{eq}$ also eliminates hyperimaginaries.
\end{prop}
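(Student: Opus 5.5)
We prove the two directions separately. Throughout we use that every sort of $T^{eq}$ is a quotient $\U^n/E$ of a home sort by a $0$-definable equivalence relation. We also use that a subset of the home sort is definable in $T^{eq}$ if and only if it is definable in $T$.

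\emph{($\Leftarrow$)} Suppose $T^{eq}$ eliminates hyperimaginaries, and let $E$ be a type-definable equivalence relation (in $T$) on a type $X$ in home-sort variables. Since $T^{eq}$ extends $T$ on the home sort, $E$ and $X$ are also type-definable in $T^{eq}$. Hence there are $T^{eq}$-definable equivalence relations $E_i$ on $T^{eq}$-definable sets $X_i$ with $\bigcap_i E_i\upharpoonright_X=E$.

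\begin{itemize}
\item Each $X_i$ and $E_i$ lives on finitely many of the variables of $X$, all of home sort.
\item By the quoted proposition, $X_i$ and $E_i$ are already definable in $T$.
\item An equivalence relation is a first-order property, so $E_i$ is still an equivalence relation in $T$.
\end{itemize}

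This gives elimination of hyperimaginaries for $T$.

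\emph{($\Rightarrow$)} Suppose $T$ eliminates hyperimaginaries. Let $\tilde E$ be a type-definable equivalence relation in $T^{eq}$ on a type $\tilde X$. Its variables $\tilde x=(\tilde x_\alpha)$ are of imaginary sorts $\U^{n_\alpha}/F_\alpha$.

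\emph{Step 1 (pull back).} Lift each variable to home-sort variables $x_\alpha$. Set $X(x):=\tilde X(\pi_{F_\alpha}(x_\alpha))_\alpha$ and
\[E(x,y):=\tilde E\big((\pi_{F_\alpha}(x_\alpha))_\alpha,(\pi_{F_\alpha}(y_\alpha))_\alpha\big).\]
Each formula here is a $T^{eq}$-formula in home-sort variables, hence equivalent to a $T$-formula. So $X$ and $E$ are type-definable in $T$. Moreover, $E$ is an equivalence relation on $X$, because it is the preimage of $\tilde E$ under the surjection $\pi=(\pi_{F_\alpha})_\alpha$.

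\emph{Step 2 (apply elimination in $T$).} Write $E=\bigcap_i E_i\upharpoonright_X$ with $E_i$ definable equivalence relations on definable sets $X_i$.

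\emph{Step 3 (push forward).} This is the main obstacle. The $E_i$ need not be $F$-invariant, so they need not descend to $\tilde X$. The plan is as follows.

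First, $E$ contains the relation $F:=\bigwedge_\alpha F_\alpha(x_\alpha,y_\alpha)$ restricted to $X$. By compactness, since $E\supseteq F$ on $X$, each $E_i$ together with a finite piece of $X$ contains finitely many of the $F_\alpha$. This uses that $F\cap(X\times X)\subseteq E\subseteq E_i$, so $X\cup X\cup F\vdash E_i$. Hence finitely many formulas of $X$ and finitely many $F_\alpha$ already imply $E_i$.

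Next, shrink $X_i$ to the definable set $X_i'$ given by that finite piece of $X$. Replace $E_i$ by $E_i':=E_i\upharpoonright_{X_i'}\cup\Delta$, or equivalently extend it by equality outside $X_i'$ as in the remark after the definition of elimination of imaginaries. This $E_i'$ is a definable equivalence relation, and it is invariant under the finitely many relevant $F_\alpha$ on $X_i'$. Because $X\subseteq X_i'$, we still have $\bigcap_i E_i'\upharpoonright_X=E$.

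Now let $\tilde X_i:=\pi(X_i')$ and $\tilde E_i:=(\pi\times\pi)(E_i')$. These are $T^{eq}$-definable. Invariance of $E_i'$ under the $F_\alpha$ ensures $\tilde E_i$ is an equivalence relation on $\tilde X_i$ whose preimage is exactly $E_i'$. Since $\pi$ is surjective and $E=\pi^{-1}\tilde E$, we get $\bigcap_i\tilde E_i\upharpoonright_{\tilde X}=\tilde E$.

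The delicate point is the compactness bookkeeping in Step 3. Making $E_i'$ both $F$-invariant and transitive requires $X_i'$ to be $F$-saturated. To secure this, I would choose $X_i'$ as a finite piece of $X$ that is $\pi$-definable, which is possible because $X$ is a pullback of $\tilde X$. Then $X_i'$ is automatically a union of $F$-classes.
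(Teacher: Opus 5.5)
Your proof is correct and has the same skeleton as the paper's: pull $\tilde E$ back along the quotient maps, eliminate in $T$, and push each $E_i$ forward as its image. Your $(\pi\times\pi)(E_i')$ is exactly the paper's $\varepsilon_J$.

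The difference is in how you obtain the key fact that $E_i$ absorbs the kernel of $\pi$. The paper implicitly takes $\tilde E$ to be an equivalence relation on the whole product of sorts. A domain $\tilde X$ can always be absorbed by passing to the type-definable $\tilde E\vee\Delta$ and restricting at the end. The paper then proves $\pi_J(a_J)=\pi_J(a'_J)\Rightarrow E_J(a_J,a'_J)$ with no compactness at all: extend $a_J,a'_J$ to full tuples that agree outside $J$, so that $\pi_I(a_I)=\pi_I(a'_I)$, and apply reflexivity of $E$.

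You keep the domain $X$, where such extensions may leave $X$. So you pay with a compactness step, producing a formula $\chi\in X$ and a finite $A$ with $\chi(x)\wedge\chi(y)\wedge F_A(x,y)\vdash E_i(x,y)$. This handles the definition exactly as stated, at the cost of a slightly more technical descent.

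Your closing worry is harmless but unnecessary. Neither $\pi$-saturation of $X_i'$ nor the adjoined $\Delta$ is needed, since the definition already allows equivalence relations on definable sets. Simply take $X_i'=\chi$ and push forward $E_i\upharpoonright_{X_i'}$ to an equivalence relation $\tilde E_i$ on the definable set $\pi(X_i')$. Two checks remain: transitivity of $\tilde E_i$, and $\tilde E_i(\pi a,\pi b)\Leftrightarrow E_i(a,b)$ for $a,b\in X_i'$. Each only compares two lifts of the same point that both lie in $X_i'$, and there your compactness statement gives $E_i$ directly. Since $X\subseteq X_i'$, this is all you need for $\bigcap_i\tilde E_i\upharpoonright_{\tilde X}=\tilde E$. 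The stronger assertion that the preimage of $\tilde E_i$ is exactly $E_i'$ is not needed, and with $\Delta$ adjoined it is not even true off $X_i'$.
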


\begin{proof}
    $(\Rightarrow)$ \quad  Let $E((y_i)_{i\in I},(y'_i)_{i\in I})$ be a type-definable equivalence relation in $T^{eq}$, where $y_i$ is of sort $\M/R_i$. To simplify notation, we write $y_I$ for $(y_i)_{i\in I}$. Then, $E(\pi_I(x_I),\pi_I(x'_I))$ is a type-definable equivalence relation in $T$, where $\pi_I$ is the product of the quotient maps $\pi_i: \M\rightarrow \M/R_i$. Since $T$ eliminates hyperimaginaries, there are definable equivalence relations $E_J(x_J,x'_J)$ in $T$, where $J\in\mathcal{J}$ is a finite subset of $I$, such that for all $a_I,a'_I\in\M$,
    \begin{equation}
        E(\pi_I(a_I),\pi_I(a'_I)) \ \Leftrightarrow \ E_J(a_J,a'_J) \textup{ for all }J\in\mathcal{J}. \tag{$*$}
    \end{equation}  
    For each $J$, consider the definable relation
    \[ \varepsilon_J(y_J,y'_J) := \exists x_J, x'_J \ (\pi_J(x_J)=y_J \wedge \pi_J(x'_J)=y'_J \wedge E_J(x_J, x'_J)). \]
    The reflexivity and symmetry of $\varepsilon_J$ is easy to see. 
    For transitivity, we need the following.

    \begin{claim}\label{EJ RJ}
        Suppose $\pi_J(a_J)=\pi_J(a'_J)$ for some $J\in\mathcal{J}$. Then, $E_J(a_J,a'_J)$ holds.
    \end{claim}

    \begin{proof}
        Suppose $\pi_J(a_J)=\pi_J(a'_J)$ holds. Extend $a_J$ in any way to a tuple $a_I$ and extend $a'_J$ to $a'_I$ such that $a_i = a'_i$ for all $i\not\in J$. Then, we have $\pi_I(a_I)=\pi_I(a'_I)$, so that by the reflexivity of $E$, we have $E(\pi_I(a_I),\pi_I(a'_I))$. By $(\ast)$, we have $E_J(a_J,a'_J)$. 
    \end{proof}
    
    Back to the proof of the transitivity of $\varepsilon_J$. Suppose we have $\varepsilon_J(b_J,b'_J)$ and $\varepsilon_J(b'_J,b''_J)$. By definition, there are $a_J,a'_J,\alpha'_J,$ and $\alpha''_J$ such that 
    \begin{align*}
        \pi_J(a_J) &=b_J &\wedge& &\pi_J(a'_J) &=b'_J &\wedge & &E_J(a_J,a'_J); \\
        \pi_J(\alpha'_J) &= b'_J & \wedge & &\pi_J(\alpha''_J) &=b''_J &\wedge &&E_J(\alpha'_J,\alpha''_J).
    \end{align*}
    By the Claim, we have $E_J(a'_J,\alpha'_J)$, so the transitivity of $E_J$ implies that $E_J(a_J,\alpha''_J)$. Then, $a_J,\alpha''_J$ witnesses $\varepsilon_J(b_J,b''_J)$ as required.
    
    Finally, we prove that $\bigcap_{J\in\mathcal{J}} \varepsilon_J = E$. Fix any $b_i,b'_i$ of sort $\M/R_i$, and any $a_i,a'_i$ such that $\pi_i(a_i)=b_i$ and $\pi_i(a'_i)=b'_i$ for all $i\in I$. Note that if we have $\varepsilon_J(b_J,b'_J)$, so that there are some $\alpha_J,\alpha'_J$ such that $\pi_J(\alpha_J)=b_J$ and $\pi_J(\alpha'_J)=b'_J$ satisfying $E_J(\alpha_J,\alpha'_J)$, by the Claim, we also have $E_J(a_J,a'_J)$. Hence, we conclude that
    \[\varepsilon_J(b_J,b'_J) \textup{ for all } J\in\mathcal{J} \ \Leftrightarrow \ E_J(a_J,a'_J) \textup{ for all } J\in\mathcal{J} \ \Leftrightarrow \ E(b_I,b'_I), \]
    where the second equivalence comes from $(*)$.

\smallskip

    $(\Leftarrow)$ \quad  Suppose $E$ is a type-definable equivalence relation in $T$. Then, there are definable equivalence relations $E_i$ in $T^{eq}$ such that $\bigcap E_i = E$. Now, $E_i$ is a definable relation on the home sort, so in fact each $E_i$ is definable in $T$. 
\end{proof}

\begin{prop}\label{if prodef exact then def exact}
    If $\Pro(\defT(T))$ is exact, then $\defT(T)$ is exact.
\end{prop}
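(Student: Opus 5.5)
Given a definable equivalence relation $E$ on a definable set $X$ in $\defT(T)$, I will produce a definable quotient by working in $\Pro(\defT(T))$ and then proving that the coequaliser obtained there lies in $\defT(T)$. By Proposition~\ref{EI iff exact}, this shows that $\defT(T)$ is exact.

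\emph{Step 1: take the quotient in the pro-completion.} The embedding $\defT(T)\rightarrow\Pro(\defT(T))$ is regular, so it preserves finite limits. Hence $E\rightrightarrows X$ is still an internal equivalence relation in $\Pro(\defT(T))$. By exactness there is a coequaliser $q:X\rightarrow Q$ in $\Pro(\defT(T))$ whose kernel pair is $E$. This $q$ is a regular epimorphism.

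\emph{Step 2: replace $q$ by a level regular epimorphism.} By Lemma~\ref{level regular epimorphism}, I can write $q\cong\lim_i(e_i:X_i\rightarrow Q_i)$ with each $e_i$ a regular epimorphism in $\defT(T)$. The domain $X$ is cocompact, by Remark~\ref{cocompact}. So, after passing to a cofinal subsystem, I may take every $X_i=X$ with identity transition maps, and each $e_i:X\rightarrow Q_i$ a definable surjection. Then the kernel pair of $q$ is the intersection of the kernel pairs $K_i$ of the $e_i$, since kernel pairs commute with limits. In the monster model this reads $E=\bigcap_i K_i$. Each $K_i$ is a definable equivalence relation containing $E$.

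\emph{Step 3: use compactness to get a single definable quotient.} Since $E$ is definable and equals the intersection of the definable sets $K_i$, compactness gives finitely many $K_{i_1},\dots,K_{i_n}$ whose intersection is $E$. Cofilteredness gives some $K_{j}$ below all of these, and so $E=K_{j}$. Therefore $e_{j}:X\rightarrow Q_{j}$ is a definable surjection whose kernel is exactly $E$. This is the definable quotient required by the definition of elimination of imaginaries, extended from $\U^n$ to an arbitrary $X$ as remarked after that definition.

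Alternatively, Step 3 can be phrased categorically. The projection $Q\rightarrow Q_{j}$ composed with $q$ is $e_{j}$, whose kernel pair is $E$, the kernel pair of $q$. It is also a regular epimorphism, so it is the coequaliser of $E$. Hence $Q\cong Q_{j}$ lies in $\defT(T)$.

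\emph{Main obstacle.} The delicate point is Step 2: justifying that the representing system can be chosen with constant domain $X$, while each component stays a regular epimorphism. The plan is to first use Lemma~\ref{level intersections}, or cocompactness of $X$, to factor each level map through $X$. Then I replace each component by its image factorisation, as in the proof of Lemma~\ref{level regular epimorphism}, so that it is surjective. If this becomes messy, I will argue directly in the monster model instead. There, $Q$ is a type, $q$ is a surjection by Proposition~\ref{isomorphism is calculated in monster} together with saturation, and $E$ is the intersection of the kernels of the coordinate maps $X\rightarrow Q_k$. Compactness then finishes exactly as in Step 3.
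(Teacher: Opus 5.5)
Your proposal is correct and follows essentially the same route as the paper: you take the quotient $q$ in $\Pro(\defT(T))$, represent it by definable maps $X\rightarrow Q_i$, observe that $E$ is the intersection of their kernels, and use compactness to find a single $Q_{j}$ whose kernel is already $E$. The concern you flag in Step~2 is unnecessary. Because $X$ lies in $\defT(T)$, the hom-set formula for $\Pro$ already presents $q$ as a compatible family of definable maps $X\rightarrow Q_i$, and only their kernels matter for the compactness step. Surjectivity is needed only for your categorical variant of Step~3, and you can recover it there by corestricting $e_{j}$ to its image.
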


\begin{proof}
    Suppose $\Pro(\defT(T))$ is exact. Fix an equivalence relation $E\rightrightarrows X$ in $\defT(T)$. Then, there is a quotient $f: X\rightarrow Y$ in $\Pro(\defT(T))$, represented by some $f_i: X\rightarrow Y_i$ in $\defT(T)$. Logically, $f$ being the quotient map for $E$ means 
    \[ E(x,x') \ \Leftrightarrow \ f_i(x)=f_i(x') \textup{ for all }i. \]
    By compactness, there exists an $i_0$ such that $E(x,x')$ is equivalent to $f_{i_0}(x)=f_{i_0}(x')$. Then, $f_{i_0}:X\rightarrow Y_{i_0}$ is the quotient for $E$ in $\defT(T)$.
\end{proof}

\begin{thm}\label{eliminate hyper iff pro exact}
    A theory $T$ eliminates hyperimaginaries if and only if $\Pro(\defT(T^{eq}))$ is exact.
\end{thm}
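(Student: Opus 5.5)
By Proposition~\ref{EHI invariant under eq}, $T$ eliminates hyperimaginaries if and only if $T^{eq}$ does. So we may replace $T$ by $T^{eq}$ and assume throughout that $T$ eliminates imaginaries. Then $\defT(T)$ is exact (Proposition~\ref{EI iff exact}), and we must show that $T$ eliminates hyperimaginaries if and only if $\Pro(\defT(T))$ is exact. Regularity of $\Pro(\defT(T))$ is already known, so only quotients of internal equivalence relations are at stake.

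First I would set up the dictionary between internal equivalence relations in $\Pro(\defT(T))$ and type-definable equivalence relations. An internal equivalence relation $E\rightrightarrows X$ is a jointly monic pair, hence a subobject of $X\times X$. By Lemma~\ref{level intersections} and Proposition~\ref{isomorphism is calculated in monster}, it is a type-definable relation on the type $X$ whose interpretation in $\U$ is an equivalence relation. Conversely, every type-definable equivalence relation on a type $X$ gives such a subobject.

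\emph{($\Rightarrow$)} Let $E$ be a type-definable equivalence relation on $X$. Elimination of hyperimaginaries gives definable equivalence relations $E_i$ on definable sets $X_i$ with $\bigcap_i E_i\upharpoonright_X=E$. We may extend each $E_i$ to the whole sort via $E_i\vee x=y$, as in the remark after the definition of elimination of imaginaries. Closing under finite intersections makes the family cofiltered, since a finite intersection of definable equivalence relations is again one. Each $E_i$ has a definable quotient $q_i:X_i\to X_i/E_i$, and refinement $E_j\subseteq E_i$ induces transition maps $X_j/E_j\to X_i/E_i$ compatible with the $q$'s. Let $Y=\lim X_i/E_i$ and let $q:X\to Y$ be the induced map; we may take $Y$ to be the image of $q$ using the regular epi--mono factorisation.

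Next I would check that $q$ is the quotient of $E$. Its kernel pair is computed in $\U$ as $\{(a,b): q_i(a)=q_i(b)\ \forall i\}=\bigcap E_i\upharpoonright_X=E$, and Proposition~\ref{isomorphism is calculated in monster} turns this into an isomorphism of subobjects. Since $q$ is a regular epimorphism and $\Pro(\defT(T))$ is regular, $q$ is the coequaliser of its kernel pair by Lemma~\ref{basics of regular}(1), that is, of $E$. The main obstacle is handling the variables: $X$ is a type in many variables, while each $E_i$ involves only finitely many of them. I would reduce this to level data using Lemma~\ref{level morphisms} and the uniform approximation Theorem~\ref{simonhenry}, or argue directly in $\U$ via compactness.

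\emph{($\Leftarrow$)} Suppose $\Pro(\defT(T))$ is exact, and let $E$ be a type-definable equivalence relation on $X$. Let $f:X\to Y$ be its coequaliser, with kernel pair $E$. By Lemma~\ref{level morphisms}, write $f=\lim(f_i:X_i\to Y_i)$ with $f_i$ definable, so $f$ corresponds to a compatible family $f_i\circ\pi_i$. Define the definable equivalence relation $E_i(x,x'):=f_i(x)=f_i(x')$ on $X_i$. Since $E$ is the kernel pair of $f$, in $\U$ we have $E(a,b)$ iff $f_i(\pi_i a)=f_i(\pi_i b)$ for all $i$, that is, $E=\bigcap_i E_i\upharpoonright_X$. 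This is elimination of hyperimaginaries, and this direction is essentially a generalisation of the proof of Proposition~\ref{if prodef exact then def exact} without the final compactness step.
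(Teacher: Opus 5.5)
Your proposal is correct and follows essentially the same route as the paper. You reduce to $T$ eliminating imaginaries via Proposition~\ref{EHI invariant under eq}. For $(\Rightarrow)$ you take the quotient to be the cofiltered limit of the definable quotients $X_i/E_i$, and for $(\Leftarrow)$ you write the quotient map levelwise and take $E_i$ to be the kernel pairs of the level maps $f_i$. The one small deviation works in your favour: the paper invokes Lemma~\ref{level regular epimorphism}, which tacitly needs $X\cong\lim X_i$, whereas you pass to the image of $q$ and compute its kernel pair in $\U$. That already settles the variable bookkeeping you flag as the main obstacle, so no appeal to uniform approximation is needed.
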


\begin{proof}
    By Proposition \ref{EHI invariant under eq}, we assume $T$ eliminates imaginaries.

\smallskip

    $(\Rightarrow)$ \quad Let $E\rightrightarrows X$ be an equivalence relation in $\Pro(\defT(T))$. Let $(E_i\rightrightarrows X_i)_{i\in I}$ be a sequence of definable equivalence relations which eliminate $E$. Without loss of generality, assume $I$ is a cofiltered system. As $T$ eliminates imaginaries, the quotients $X_i/E_i$ are definable. Given a map $i\rightarrow j$ in $I$,  the universality of $X_i/E_i$ as a coequaliser induces a map $X_i/E_i\rightarrow X_j/E_j$. 
\[\begin{tikzcd}
	{E_i} & {X_i} & {X_i/E_i} \\
	{E_j} & {X_j} & {X_j/E_j}
	\arrow[shift right, from=1-1, to=1-2]
	\arrow[shift left, from=1-1, to=1-2]
	\arrow[from=1-1, to=2-1]
	\arrow["{e_i}", from=1-2, to=1-3]
	\arrow[from=1-2, to=2-2]
	\arrow[dashed, from=1-3, to=2-3]
	\arrow[shift right, from=2-1, to=2-2]
	\arrow[shift left, from=2-1, to=2-2]
	\arrow["{e_j}"', from=2-2, to=2-3]
\end{tikzcd}\]

    Hence, we obtain the cofiltered limit $\lim X_i/E_i \in \Pro(\defT(T))$. Moreover, the maps $e_i:X_i \rightarrow X_i/E_i$ induce a map $e=\lim e_i:X\rightarrow \lim X_i/E_i$. By Lemma \ref{level regular epimorphism}, $e$ is a regular epimorphism. Kernel pairs commute with cofiltered limits, so we have
    \[ X\times_{\lim X_i/E_i} X 
    \cong \lim (X_i\times_{X_i/E_i}X_i) 
    \cong \lim E_i
    \cong E.\]
    Hence, $E\rightrightarrows X$ is a kernel pair with coequaliser $\lim X_i/E_i$ as required.

    \smallskip

    $(\Leftarrow)$ \quad Let $E\rightrightarrows X$ be a type-definable equivalence relation. By exactness, there is a type-definable quotient $Y = X/E$. By Lemma \ref{level intersections}, the map $\pi:X\rightarrow X/E$ is the intersection of some $(\pi_i:X_i\rightarrow Y_i)$, where $X=\bigcap X_i$ and $Y=\bigcap Y_i$.

    Let $E_i$ be the pullback $X_i \times_{Y_i} X_i$. It is clear that $E_i$ is a definable equivalence relation. Moreover, in a saturated model $\M$ for every $a,b\in \M(X)$, we have:
    \begin{align*}
        E(a,b) &\Leftrightarrow \pi(a)=\pi(b) \\
        &\Leftrightarrow \pi_i(a)=\pi_i(b)  \textup{ for all } i \\
        &\Leftrightarrow E_i(a,b) \textup{ for all } i.
    \end{align*}
    Hence, $\M(E)=\bigcap \M(E_i)$. Since $T$ is complete, $E=\bigcap E_i$.
\end{proof}

Noting that the proof of Theorem \ref{eliminate hyper iff pro exact} does not use the fact that $\defT(T)$ is boolean, we in fact obtain a more general theorem: the pro-completion of an exact category $\C$ is exact if and only if every equivalence relation in $\Pro(\C)$ is the cofiltered limit of equivalence relations in $\C$. The latter condition was studied in \cite[Ch.~III \S4]{meyerphd}. Specifically, it is proved that reflexive symmetric relations in $\Pro(\C)$ are cofiltered limits of reflexive symmetric relations in $\C$. This is in fact a corollary of uniform approximation. In the notation of Theorem \ref{simonhenry}, let $I$ be the following category
    \[\begin{tikzcd}
    	R & {X,}
    	\arrow["s"{description}, from=1-1, to=1-1, loop, in=55, out=125, distance=10mm]
    	\arrow["{p_1}"{description}, shift left=3, from=1-1, to=1-2]
    	\arrow["{p_2}"{description}, shift right=3, from=1-1, to=1-2]
    	\arrow["r"{description}, from=1-2, to=1-1]
    \end{tikzcd}\]
where $r$ satisfies the usual property of reflexivity and $s$ satisfies the usual property of symmetry. Now, $I$ has a non-identity endomorphism $s$, but part (1) of Theorem \ref{simonhenry} still applies as $\defT(T)$ has finite limits.

However, one cannot do this for transitivity. Indeed, the diagram for an equivalence relation is the following:
    \[\begin{tikzcd}
    	{R\times_X R} & R & {X,}
    	\arrow["t"{description}, from=1-1, to=1-2]
    	\arrow["s"{description}, from=1-2, to=1-2, loop, in=55, out=125, distance=10mm]
    	\arrow["{p_1}"{description}, shift left=3, from=1-2, to=1-3]
    	\arrow["{p_2}"{description}, shift right=3, from=1-2, to=1-3]
    	\arrow["r"{description}, from=1-3, to=1-2]
    \end{tikzcd}\]
where $r$ and $s$ are as before, and $t$ satisfies the usual property of transitivity. Of course, one can let $I$ to be the category above and apply Theorem \ref{simonhenry}. Then, one obtains morphisms $t_i:W_i\rightarrow E_i$, whose cofiltered limit is $t$, but there is no guarantee that $W_i \cong E_i\times_{X_i} E_i$, or that the relations $E_i\rightrightarrows X_i$ are equivalence relations.

\smallskip

The proof of Theorem \ref{eliminate hyper iff pro exact} provides the precise construction of quotients in $\Pro(\defT(T))$. We apply this to the example of the infinitary Chinese remainder theorem with $T=\textup{Th}(\mathbb{Z},+,\times,0,1)$. We are interested in the quotient for the type-definable equivalence relation $\bigcap_{p \textup{ prime}} \textup{mod }p$. While $T$ might not eliminate all hyperimaginaries, $T$ certainly eliminates this particular one, which is already given as the intersection of definable equivalence relations. Hence, the quotient is also type-definable.

The quotient is constructed as the cofiltered limit of the quotients of the equivalence relations $\bigcap_{p\in I} \textup{mod } p$, where $I$ is a finite set of prime numbers.
The ordinary Chinese remainder theorem states that finite products and quotients commute: $\prod_{p\in I}\mathbb{Z}/p\mathbb{Z} \cong \mathbb{Z}/(\bigcap_{p\in I}\textup{mod }p)$. Hence, the quotient of $\bigcap_{p \textup{ prime}} \textup{mod }p$ is the profinite integers $\prod_{p \textup{ prime}}\mathbb{Z}/p\mathbb{Z}$.

This might seem surprising. Consider that $\bigcap_{p \textup{ prime}} \textup{mod }p$ is the trivial equivalence relation on $\mathbb{Z}$, with quotient $\mathbb{Z}$ itself, not $\prod_{p \textup{ prime}}\mathbb{Z}/p\mathbb{Z}$. The problem is that one should not calculate quotients inside the standard model $\mathbb{Z}$. Indeed, given a sufficiently saturated model $\M$, we do have $\M/(\bigcap_{p \textup{ prime}} \textup{mod }p) \cong \prod_{p \textup{ prime}} \mathbb{Z}/p\mathbb{Z}$. 

\begin{example}
    It was noted in \cite[\S1]{Pillay_Poizat_1987} that the theory \textsf{RCF} of real closed fields does not eliminate hyperimaginaries. In particular, the type-definable equivalence relation 
    \[ E(x,y):= \{|x-y| < \frac{1}{n} \ | \ n\in\mathbb{Z}^+ \}\]
    of being infinitesimally close together is not eliminated. The original argument uses o-minimality, but Theorem \ref{eliminate hyper iff pro exact} yields a new argument. Noting that \textsf{RCF} eliminates imaginaries, suppose \textsf{RCF} also eliminates hyperimaginaries, so that $\Pro(\defT(\textsf{RCF}))$ is exact. Then, the quotient of $E$ is type-definable, say by a type $X$. Now, an $E$-hyperimaginary is a set of numbers infinitesimally close to each other and is represented by their shared standard part. Hence, $X$ interpreted by a saturated model $\M$ is isomorphic to the standard $\mathbb{R}$ and does not contain an infinitesimal. However, one can force infinitesimals to exist in a saturated model by realising the type $X\cup\{0<x<\frac{1}{n}\ | \ n\in\mathbb{Z}^+ \}$, yielding a contradiction.
\end{example}

We draw a corollary from Theorem~\ref{eliminate hyper iff pro exact}.

\begin{cor}
    A theory $T$ eliminates imaginaries and hyperimaginaries if and only if $\Pro(\defT(T))$ is exact.
\end{cor}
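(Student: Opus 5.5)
The corollary follows by combining Proposition~\ref{if prodef exact then def exact}, Proposition~\ref{EI iff exact}, and Theorem~\ref{eliminate hyper iff pro exact}, with Theorem~\ref{Teq is exact} used to pass between $\defT(T)$ and $\defT(T^{eq})$.

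For the forward direction, suppose $T$ eliminates imaginaries and hyperimaginaries. By Proposition~\ref{EI iff exact}, $\defT(T)$ is exact. An exact category is its own exact completion, since the identity satisfies the universal property of $\C_{ex/reg}$. Hence Theorem~\ref{Teq is exact} gives $\defT(T)\simeq\defT(T)_{ex/reg}\simeq\defT(T^{eq})$. Concretely, the inclusion $\defT(T)\hookrightarrow\defT(T^{eq})$ is then essentially surjective, because each imaginary sort is definably in bijection with a definable set of $T$. An equivalence of categories induces an equivalence $\Pro(\defT(T))\simeq\Pro(\defT(T^{eq}))$, and exactness is invariant under equivalence. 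Theorem~\ref{eliminate hyper iff pro exact} says that elimination of hyperimaginaries makes $\Pro(\defT(T^{eq}))$ exact, so $\Pro(\defT(T))$ is exact.

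For the converse, suppose $\Pro(\defT(T))$ is exact. Proposition~\ref{if prodef exact then def exact} shows that $\defT(T)$ is exact, so by Proposition~\ref{EI iff exact}, $T$ eliminates imaginaries. As in the forward direction, this gives $\Pro(\defT(T))\simeq\Pro(\defT(T^{eq}))$, so the latter is exact. Theorem~\ref{eliminate hyper iff pro exact} then shows that $T$ eliminates hyperimaginaries.

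The only step needing care is the identification $\defT(T)\simeq\defT(T^{eq})$ when $T$ eliminates imaginaries, together with the fact that $\Pro$ preserves equivalences. Both are routine: the first is Theorem~\ref{Teq is exact} plus idempotence of the exact completion on exact categories, and the second follows from the universal property of $\Pro$. One could also argue directly via the proof of Theorem~\ref{eliminate hyper iff pro exact}, which already works over $\defT(T)$ once $T$ is assumed to eliminate imaginaries.
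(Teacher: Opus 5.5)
Your proposal is correct and follows the paper's proof essentially step for step. Both directions go through Proposition~\ref{EI iff exact}, the identification $\defT(T)\simeq\defT(T)_{ex/reg}\simeq\defT(T^{eq})$ from Theorem~\ref{Teq is exact}, and Theorem~\ref{eliminate hyper iff pro exact}, with Proposition~\ref{if prodef exact then def exact} supplying elimination of imaginaries in the converse; your added remarks (an exact category is its own exact completion, and $\Pro$ preserves equivalences) only make explicit what the paper uses tacitly.
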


\begin{proof}
    $(\Rightarrow)$ \quad By Proposition~\ref{EI iff exact} and Theorems~\ref{Teq is exact} and ~\ref{eliminate hyper iff pro exact}. 
    
    \smallskip
    
    $(\Leftarrow)$ \quad By Proposition~\ref{if prodef exact then def exact}, we know $\defT(T)$ is exact, so by Proposition~\ref{EI iff exact}, $T$ eliminates imaginaries. By Theorem~\ref{Teq is exact}, we know $\defT(T) \simeq \defT(T)_{ex/reg} \simeq \defT(T^{eq})$. Hence, we have $\Pro(\defT(T^{eq})) \simeq \Pro(\defT(T))$, which is exact, so by Theorem~\ref{eliminate hyper iff pro exact}, $T$ eliminates hyperimaginaries.
    
\end{proof}

%% file: 5heq.tex
\bigskip

\section{The heq construction and the exact completion}\label{heq}
We have now established in Theorem \ref{eliminate hyper iff pro exact} a correspondence between two properties---elimination of hyperimaginaries and the exactness of $\Pro(\defT(T)_{ex/reg})$. Just as Theorem \ref{Teq is exact} is a constructive version of Proposition \ref{EI iff exact}, in this section we show there is a constructive version of Theorem \ref{eliminate hyper iff pro exact}. Indeed, we will show that the heq construction corresponds to the ex/reg completion of $\Pro(\defT(T))$.

\medskip

\subsection{The heq construction}
We recall the heq construction from \cite{DobrowolskiJan2022Kipl}. 

As notation, given a type-definable equivalence relation $E(x,x')$, we denote by $[x]_E$ the $E$-hyperimaginary corresponding to the $E$-class of $x$. When it is clear, we omit the subscript and write $[x]$ for $[x]_E$. If $y$ is variable of the sort of an $E$-hyperimaginary, we denote by $y_r$ a variable of the home sort corresponding to a representative. Standard model theoretic notation dictates that one can write $a\in\M$ when $a$ is a finite tuple in $\M$; we shall extend this abuse of notation to the case when $a$ is a small infinite tuple.

\begin{defin}[{\cite[Definitions 10.7 and 10.8]{DobrowolskiJan2022Kipl}}]
    Let $\M$ be a model of $T$. Let $\E$ be a set of type-definable equivalence relations.
    \begin{enumerate}
        \item The language $\Lang_\E$ expands the original language $\Lang$ by sorts $S_E$ for every $E\in\E$. For every $\Lang$-formula $\phi(x,y)$ and type-definable equivalence relatio $E$ on the sort of $y$, we add in a new predicate $R_\phi(x,[y]_E)$. However, we consider $\Lang_\E$ as a language in positive logic, even though $T$ is Boolean. 
        \item The $\Lang_\E$-structure $\M^\E$ expands $\M$ as follows: the sort $S_E$ is interpreted as the quotient of $E(\M)$. For each $\Lang$-formula $\phi(x,y)$, type-definable equivalence relation $E$ on the sort of $y$, and elements $a,b\in\M$,
        \[ \M^\E \models R_\phi(a,[b]_E) \ \Leftrightarrow \ \textup{there is }b' \textup{ such that } \M\models \phi(a,b')\wedge E(b,b'). \]
    \end{enumerate}
\end{defin}

From now on, fix a monster model $\mathcal{U}$ of $T$.

One would like to add in all the hyperimaginaries, but this is a proper class since the variables can be arbitrarily long. Luckily, we have the following lemma. 

\begin{lem}[{\cites[Corollary 3.3\addsemicolon]{Ben-Yaacov2003c}[Fact 1.1]{Hart2000-HARCAC-24}}]
    Let $E(x,y)$ be a type-definable equivalence relation. Then there is a set $I$ and type-definable equivalence relations $E_i(x_i,y_i) : i\in I$, where $x_i$ is a subtuple of $x$ with $|x_i|\leq |T|$, such that for all $a,b\in\mathcal{U}$,
    \[ \mathcal{U}\models E(a,b) \ \Leftrightarrow \ \mathcal{U}\models E_i(a_i,b_i) \textup{ for all }i\in I. \]
    Moreover, if $T$ is Boolean, we may assume $|x_i|$ is countable.
\end{lem}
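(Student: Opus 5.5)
We reduce to countable sub-tuples by a Löwenheim--Skolem style closure argument. Write $E(x,y)$ as a partial type $\Sigma(x,y)$ in variables $x=(x_\alpha)_{\alpha<\mu}$. By compactness each formula of $\Sigma$ mentions only finitely many variables. For a subset $S\subseteq\mu$ let $E_S(x_S,y_S)$ be the projection of $E$ onto the coordinates in $S$:
\[ E_S(x_S,y_S) \ :\Leftrightarrow \ \exists x_{\mu\setminus S},\, y_{\mu\setminus S}\ E(x,y). \]
This is type-definable, being a small existential quantification of a partial type, equivalently the image of a projection in $\Pro(\defT(T))$. It is clearly reflexive and symmetric. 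What must be arranged is transitivity, and that the intersection of the $E_S$ over suitable $S$ recovers $E$.

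The key step is to call $S$ \emph{closed} if $E_S$ is transitive and, moreover, for all $a,b$ we have $E_S(a_S,b_S)\Rightarrow \exists b'\,(b'_S=b_S\wedge E(a,b'))$. The second condition already holds by definition of $E_S$, up to adjusting the $a$-coordinates outside $S$. Transitivity of $E_S$ is an implication between partial types, namely
\[ E_S(x_S,y_S)\wedge E_S(y_S,z_S)\to E_S(x_S,z_S). \]
I would reformulate it as follows: whenever $E(a,b)$ and $E(b',c)$ with $b_S=b'_S$, there are $a',c'$ agreeing with $a,c$ on $S$ such that $E(a',c')$. By compactness, each formula of the type expressing $E_S(x_S,z_S)$ needs only finitely many additional coordinates of witnesses to be controlled. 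The plan is then a closure argument. Starting from any finite $S_0$, build $S_0\subseteq S_1\subseteq\cdots$ with $|S_{n+1}|\le|S_n|+|T|$, where $S_{n+1}$ adds, for each of the at most $|T|+|S_n|$ formulas relevant to transitivity over $S_n$, the finitely many coordinates needed to witness it. Then $S=\bigcup S_n$ has size at most $|T|$ and is closed.

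Finally, every coordinate $\alpha$ lies in some closed $S$ of size at most $|T|$. If $E_S(a_S,b_S)$ holds for all closed $S$, then every formula $\phi$ of $\Sigma$ lies over some closed $S$ containing its finitely many variables, and so $\phi(a,b)$ holds. Hence $E=\bigcap_S E_S$. For the Boolean case, where $T$ is possibly uncountable, I would refine the closure step so that each step adds only countably many coordinates. Working with a single formula at a time, each $E_S$ needs only countably many formulas to be decided, and in the Boolean setting compactness with negation lets one replace the type $E_S$ by a countable subtype that still defines a transitive relation.

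I expect the main obstacle to be making the closure step genuinely guarantee transitivity of $E_S$. The difficulty is that $E_S$ involves an existential quantifier over the outside coordinates, so the witnesses for $E_S(a_S,b_S)$ and $E_S(b_S,c_S)$ may disagree outside $S$. If the direct closure argument stalls, I would fall back to the standard trick: replace $E_S$ by its transitive closure restricted to $S$. This is the union of finite compositions, which is not type-definable in general, but it is a bounded intersection of type-definable relations. Alternatively, one can use boundedness of the number of $E$-classes in a monster model, as in the cited sources.
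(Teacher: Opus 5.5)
Your projection method does prove the first claim (supports of size $\le|T|$), and the worry in your last paragraph is unfounded, but the closure step has to be made precise as follows. In a saturated model, $E_{S_n}$ is type-defined by the formulas $\chi(x_{S_n},z_{S_n})$ implied by $E$; there are at most $|T|+|S_n|$ of them. For each such $\chi$ we have $E(x,y)\wedge E(y,z)\vdash\chi(x,z)$, so compactness gives one finite conjunction $\sigma$ of members of $\Sigma$ with $\sigma(x,y)\wedge\sigma(y,z)\vdash\chi(x,z)$. Put the finitely many coordinates of $\sigma$ into $S_{n+1}$. Since $E_{S_{n+1}}\vdash\sigma$, and $\sigma$ reads only coordinates in $S_{n+1}$ (which the middle tuple shares), you get $E_{S_{n+1}}\circ E_{S_{n+1}}\subseteq E_{S_n}$; the disagreement of witnesses outside $S$ never enters. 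For $S=\bigcup_n S_n$, saturation gives $E_S=\bigcap_n E_{S_n}$, so $E_S$ is transitive. Drop the fallbacks: a transitive closure is a union, not an intersection, of type-definable relations, and $E$ is not assumed bounded. (The paper gives no proof of this lemma. It cites Ben-Yaacov and Hart--Kim--Pillay, who work with subtypes of $E$ rather than projections.)

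The genuine gap is the `moreover' clause: projections cannot give countable supports. $E_S$ is the finest relation on $x_S$ implied by $E$, and each closure stage must handle all of its $|T|$ formulas. For instance, let a sort $A$ carry $\aleph_1$ independent maps $h_\alpha$ (any finitely many take arbitrary prescribed values simultaneously) into a sort $Z\models\textup{Th}(\mathbb{Z},+,0,1)$. Let $E$ on $A\times\{0,1\}^{\omega_1}$ be $\bigwedge_\alpha h_\alpha(x_\ast)+x_\alpha=h_\alpha(y_\ast)+y_\alpha$. For countable $S\ni\ast$, pick $\alpha_0\notin S$ and, by saturation, $a_\ast,b_\ast,c_\ast$ with equal $h_\beta$-values for all $\beta\neq\alpha_0$ and $h_{\alpha_0}$-values $0,1,2$. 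With all other coordinates $0$, this gives $E_S(a,b)$ and $E_S(b,c)$ but not $E_S(a,c)$. For $\ast\notin S$ the projection is trivial, so no family of countable projections works.

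You must therefore use relations strictly coarser than projections, which your last sentences only gesture at; the assertion that ``each $E_S$ needs only countably many formulas'' is false. The standard argument runs as follows. Take $\Sigma$ closed under conjunction, fix $\varphi\in\Sigma$, and put $\varphi_0=\varphi$. From $E(x,y)\wedge E(y,z)\vdash\varphi_n(x,z)$, Boolean compactness gives a single $\psi\in\Sigma$ with $\psi(x,y)\wedge\psi(y,z)\vdash\varphi_n(x,z)$. Set $\varphi_{n+1}(x,y)=\psi(x,y)\wedge\psi(y,x)\wedge\varphi_n(x,y)\in\Sigma$. Then $E_\varphi=\bigwedge_n\varphi_n$ lives on countably many coordinates and is reflexive, symmetric and transitive. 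Moreover $E\vdash E_\varphi\vdash\varphi$, so $E=\bigcap_{\varphi\in\Sigma}E_\varphi$. Boolean compactness is exactly what produces a single formula per stage; in positive logic one only gets up to $|T|$ formulas, which is where the general $|T|$ bound comes from.
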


\begin{defin}
    We define $heq$ to be the set of type-definable equivalence relations $E(x,y)$, where $|x|\leq|T|$.
\end{defin}

The following is the key lemma that allows us to describe $\mathcal{U}^{heq}$ using $\mathcal{U}$.

\begin{lem}[{\cite[Lemma 10.10]{DobrowolskiJan2022Kipl}}]\label{heq translate}
    Let $\phi(x)$ be an $\Lang_{heq}$-formula. Then there is a set $\Sigma(x_r)$ of $\Lang$-formulas such that for all $a\in\mathcal{U}^{heq}$ and $a_r\in\mathcal{U}$ representing $a$,
    \[ \mathcal{U}^{heq}\models\phi(a) \ \Leftrightarrow \ \mathcal{U}\models\Sigma(a_r). \]
\end{lem}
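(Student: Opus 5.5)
We argue by induction on the construction of the positive $\Lang_{heq}$-formula $\phi(x)$, producing $\Sigma_\phi(x_r)$ at each stage. Throughout, for each hyperimaginary variable $y$ of sort $S_E$, its representative variable $y_r$ lives in the home sort, and home-sort variables represent themselves with $E$ taken to be equality. The key point is that $\Sigma_\phi$ must hold for \emph{one} representative exactly when it holds for \emph{every} representative. So we prove a slightly stronger statement: each $\Sigma_\phi(x_r)$ is invariant under replacing $x_r$ by an $E$-equivalent tuple. Once invariance is known, ``some representative'' and ``every representative'' agree.

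\emph{Atomic case.} For $R_\psi(x,[y]_E)$, take
\[\Sigma(x,y_r) := \{\exists y'\,(\psi(x,y') \wedge \textstyle\bigwedge \Delta(y_r,y'))\ :\ \Delta\subseteq E \textup{ finite}\}.\]
By saturation of $\U$ and compactness, $\U\models\Sigma(a,b)$ if and only if there is $b'$ with $\psi(a,b')\wedge E(b,b')$, which is the definition of $R_\psi$ in $\U^{heq}$. Invariance under changing $b$ within its $E$-class follows from the transitivity of $E$. Equality $[y]_E=[z]_E$ on a sort $S_E$ is translated by the type $E(y_r,z_r)$. Equality in the home sort and the constants $\top,\bot$ translate to themselves.

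\emph{Connectives.} A conjunction $\phi\wedge\chi$ translates to $\Sigma_\phi\cup\Sigma_\chi$. A finite disjunction $\phi\vee\chi$ translates to $\{\sigma\vee\tau : \sigma\in\Sigma_\phi,\ \tau\in\Sigma_\chi\}$, which is equivalent to $\Sigma_\phi\vee\Sigma_\chi$ in $\U$ by compactness. In both cases invariance is inherited from the pieces.

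\emph{Existential quantifier.} This is the main step. For $\exists z\,\phi(x,z)$ with $z$ of sort $S_E$ (and $|z_r|\le|T|$), we set
\[\Sigma(x_r) := \{\exists z_r \textstyle\bigwedge\Delta(x_r,z_r) : \Delta\subseteq\Sigma_\phi \textup{ finite}\}.\]
We must show that $\U\models\Sigma(a_r)$ if and only if some $c_r$ satisfies $\Sigma_\phi(a_r,c_r)$. The backward direction is trivial. The forward direction is where saturation is needed: the type $\Sigma_\phi(a_r,z_r)$ is finitely satisfiable and is over a small parameter set, so it is realised in the monster $\U$. Lemma~\ref{heq translate} is stated for a monster $\U$, so we may use this. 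Every element of $S_E$ has a representative $c_r$, so the witness $[c_r]_E$ exists in $\U^{heq}$. Invariance of $\Sigma$ in $x_r$ follows from invariance of $\Sigma_\phi$.

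The main obstacle I expect is bookkeeping in the quantifier step. The first issue is that $z_r$ may be an infinite tuple, of length at most $|T|$. Finite subsets $\Delta$ then mention only finitely many coordinates of $z_r$, so the quantifier $\exists z_r$ can be taken over those coordinates alone; smallness keeps the compactness argument valid. The second issue is that invariance must be set up as part of the inductive hypothesis; otherwise the existential and atomic cases do not interact correctly with the choice of representatives.
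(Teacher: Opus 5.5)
Your proposal is correct. The paper gives no proof of its own and simply cites \cite[Lemma 10.10]{DobrowolskiJan2022Kipl}, and your induction on positive $\Lang_{heq}$-formulas is the standard argument for that result: atomic $R_\psi$ and $S_E$-equality via compactness, $\wedge$ and $\vee$ directly, and $\exists$ via saturation of $\U$. One small remark is that the invariance you build into the induction is not an extra strengthening, since it already follows from the statement's quantification over all representatives $a_r$ of $a$.
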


\begin{remark}[{\cite[Lemma 10.11]{DobrowolskiJan2022Kipl}}]
    It is clear that the lemma works even when we replace the $\Lang_{heq}$-formula with a set of $\Lang_{heq}$-formulas.
\end{remark}

Shelah's original eq construction specifically expands a structure with imaginaries, so that this expansion eliminates imaginaries. This is not the case with heq and hyperimaginaries. One does not apply the heq construction, so that the expanded structure eliminates hyperimaginaries. Rather, the heq construction simply expands a structure \emph{with} hyperimaginares.

\begin{remark}[{\cite[Lemma 10.13]{DobrowolskiJan2022Kipl}}]\label{typedef quotient heq}
    Let $E(x_r,y_r)$ be a type-definable equivalence relation in $T$ with $|x_r|=I$. Then, the quotient map $\mathcal{U}^I \rightarrow S_E$ is type-definable in $\mathcal{U}^{heq}$. This is given as follows: for every formula $\phi(x_r,y_r) \in E(x_r,y_r)$, we obtain a predicate $R_\phi(x_r,y)$. We claim that the quotient map is precisely the realisations of the type $P:= \{R_\phi(x_r,y)\ |\ \phi\in E\}$.

    To see this, assume $\mathcal{U}^{heq}\models R_\phi(a,[a'])$ for all $\phi\in E$, for some $a,a'\in\mathcal{U}^I$. We wish to show that $[a']$ is the representative of $a$. Consider the type
    \[ \Gamma(y_r) = E(a,y_r) \cup E(y_r,a'). \]
    We show $\Gamma$ is finitely satisfiable. Given $\phi\in E$, we have $\mathcal{U}^{heq}\models R_\phi(a,[a'])$, so there is some $a''\in\mathcal{U}$ such that $\mathcal{U}\models \phi(a,a'')\wedge E(a'',a')$. By compactness and saturation, there is $b\in\mathcal{U}$ realising $\Gamma$. Then, $[a]=[b]=[a']$ as required. This shows that the realisations of $P$ are contained in the quotient map.

    For the converse, assume $\mathcal{U}\models E(a,a')$. In particular, $\mathcal{U}^{heq}\models R_\phi(a,[a'])$ for all $\phi\in E$.
\end{remark}

It was shown in \cite[Example 3.1.6]{alma992983361725701631} that $\textup{Th}(\mathcal{U}^{heq})$ does not axiomatise the class $\{ \N^{heq} \ | \ \N\models T\}$. In particular, the type $P$ in Remark \ref{typedef quotient heq} might not define a function (let alone the quotient map) in some model $\N\models\textup{Th}(\mathcal{U}^{heq})$. Otherwise, $P$ would be a morphism in $\Pro(\defT(\textup{Th}(\mathcal{U}^{heq})))$ and thus a cofiltered limit of functions definable in $\textup{Th}(\mathcal{U}^{heq})$. In other words, we would require there to be a subset $E_0\subseteq E$ such that $\{R_\phi\ | \ \phi\in E_0\}$ defines the same set as $P$ and for every $\phi\in E_0$, the predicate $R_\phi$ defines a function. The following example shows this does not happen.

\begin{example}
    Let $T$ be the theory of infinite sets in the empty language. Let $E$ be the discrete equivalence relation on tuples $(x_n)_{n\in\omega}$: 
    \[E((x_n),(y_n)) \ \Leftrightarrow \ x_n=y_n \textup{ for all }n.\]
    Then, the sort $S_E$ is interpreted in $\U^{heq}$ by $\U^\omega$. A typical formula $\phi(x_1,\ldots,x_m,(y_n)_{n\in\omega})\in E$ has the form $\bigwedge_{i=1}^m x_i=y_i$. The interpretation of $R_\phi(x_1,\ldots,x_m,y)$ in $\U^{heq}$ is that $(x_1,\ldots,x_m)$ is a subtuple of $y$ (which is an element of $\U^\omega$). In particular, given $a_1,\ldots,a_m$, there are infinitely many $b$ such that $R_\phi(a_1,\ldots,a_m,b)$ holds, so $R_\phi$ does not define a function. Indeed, if we fix a sequence $(a_n)_{n\in\omega}$, the type $\{R_\phi(a_1,\ldots,a_m,y)\wedge R_\phi(a_1,\ldots,a_m,z)\wedge y\neq z \ | \ \phi\in E\}$ is consistent.
\end{example}

When we first learn about non-standard models of arithmetic, we might find the existence of infinite elements and its relative consistency with the axioms of arithmetic puzzling. Na\"{i}vely, we might try to axiomatise away these non-standard models. Perhaps we expand the structure by a predicate $U$, whose interpretation is all the standard integers, and extend our theory by the sentence saying that all elements are $U$. As we know, this does not work: non-standard models always exist. Passing from $\U$ to $\U^{heq}$ is exactly doing this. In hindsight, it is no surprise that $\textup{Th}(\U^{heq})$ does not axiomatise that the type $P$ from Remark~\ref{typedef quotient heq} defines a function. This is related to the fact that $\Pro(\Pro(\C))$ is in general not equivalent to $\Pro(\C)$. Likewise, $(\U^{heq})^{heq}$ is not the same as $\U^{heq}$ in any sense. 

If we wish to build a syntactic category of definable sets and functions from the structure $\U^{heq}$, we would want types like $P$ above to be morphisms. We have seen that $P$ is not a morphism in $\Pro(\defT(\textup{Th}(\U^{heq})))$. The problem lies in the fact that when we take the theory $\textup{Th}(\U^{heq})$, we lose information which is type-definable, for example that $P$ defines a function in $\U^{heq}$. Hence, when we extract a theory from $\U^{heq}$, we should instead do so in an infinitary positive logic. Consider that $\mathcal{U}^{heq}$ essentially is the structure of hyperdefinable sets of $\mathcal{U}$, where a set is hyperdefinable if it is the quotient of some type-definable equivalence relation. These hyperdefinable sets are closed under infinite conjunctions, infinite existential quantification and finite disjunction, but \emph{not} under negation. This corresponds exactly to infinitary positive logic, or the internal logic of a $\infty$-coherent category. We pursue this in the next subsection.

\medskip

\subsection{The infinitary syntactic category of a structure}
We study an infinitary positive variant of $\defT(T)$. We first define this semantically on a model.

\begin{defin}
    We define the \emph{infinitary syntactic category} $\tpdefT(\M)$ of a structure $\M$ as follows:
    \begin{itemize}
        \item Objects are type-definable sets of $\M$;
        \item Morphisms are type-definable sets of $\M$, which are the graphs of some function. 
    \end{itemize}
\end{defin}

Two different models of the same theory can give rise to different infinitary syntactic categories, but if the two models are sufficiently saturated, then the infinitary syntactic categories are equivalent. 

\begin{prop}
    Let $\M$ and $\N$ be monster models of the same theory $T$. Let $\tpdefT(\M)$ and $\tpdefT(\N)$ denote their infinitary syntactic categories. Then $\tpdefT(\M) \simeq \tpdefT(\N)$. 
\end{prop}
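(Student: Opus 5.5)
The plan is to identify both categories with the pro-completion of the finitary syntactic category of the common theory $T$, namely $\Pro(\defT(T))$. Then $\tpdefT(\M)\simeq\Pro(\defT(T))\simeq\tpdefT(\N)$. Concretely, I would define a functor $\Phi_\M:\Pro(\defT(T))\rightarrow\tpdefT(\M)$ by interpretation in $\M$. A type $X=(X_i)_{i\in I}$, viewed via the equivalence with the category of types, is sent to its realisation set $X(\M)=\bigcap_i X_i(\M)$. A morphism, represented by a compatible family $(f_j)$, is sent to the graph of the induced function $X(\M)\rightarrow Y(\M)$. Compatibility and the equivalence relation on families ensure this is well defined, since $T\cup X\vdash f_j=g_j$ forces equality of the interpreted functions. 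Functoriality is immediate.

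Next I check that $\Phi_\M$ is full and faithful. Faithfulness follows from Lemma~\ref{functions of Pro} together with saturation. If two morphisms have the same graph in $\M$, then the subobjects of $X\times Y$ they determine have the same realisations in a saturated model, and since $\M$ is a monster (sufficiently saturated relative to the small types involved), the two types are $T$-equivalent. Fullness is the content of Lemma~\ref{functions of Pro}. A morphism of $\tpdefT(\M)$ is a type-definable set $\Gamma\subseteq X(\M)\times Y(\M)$ that is the graph of a function. By deductively closing it, $\Gamma$ becomes a subobject of $X\times Y$ in $\Pro(\defT(T))$ whose interpretation in the saturated model $\M$ is a function, so it corresponds to a morphism of $\Pro(\defT(T))$. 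One point needs care here. Type-definable in $\tpdefT(\M)$ should mean $0$-type-definable, matching the convention that definable means $0$-definable in $\defT(T)$. With parameters, the statement would instead depend on the models, so I would adopt the parameter-free reading.

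Essential surjectivity is then tautological. A type-definable set of $\M$ is $p(\M)$ for some small partial type $p$, and its $T$-deductive closure is an object of the category of types. The initial object $\bot$ goes to the empty set. Applying the same construction to $\N$ gives $\Phi_\N$, and the composite $\Phi_\N\circ\Phi_\M^{-1}$ is the required equivalence.

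The main obstacle I expect is faithfulness, together with the saturation bookkeeping in fullness. We must know that two small types with the same realisations in $\M$ are $T$-equivalent, and that a type whose realisations in $\M$ form a function graph does so in every sufficiently saturated model. Both follow by compactness once $\M$ is $|x|^+$-saturated in the relevant variables. So I would make explicit that "monster" means $\kappa$-saturated for the fixed $\kappa$ bounding the size of all types. With that in place, Lemma~\ref{functions of Pro} and Proposition~\ref{isomorphism is calculated in monster} apply verbatim to both $\M$ and $\N$.
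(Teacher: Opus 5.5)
Your proposal is correct, but it takes a different route from the paper. The paper never passes through $\Pro(\defT(T))$. It defines the comparison functor directly by reinterpretation, $X(\M)\mapsto X(\N)$, and uses compactness in the two saturated models to check two things. First, two types with the same realisations in $\M$ have the same realisations in $\N$: if $a\in\N$ realises $X$ but fails some $\psi\in Y$, then $X\cup\{\neg\psi\}$ is consistent and so is realised in $\M$. Second, a type-definable graph of a function in $\M$ stays total and single-valued in $\N$. Totality is expressed as an equivalence of types, failure of single-valuedness as the consistency of a type, and both are transferred. The same construction in the other direction is inverse to this functor.

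You instead show that interpretation $\Pro(\defT(T))\to\tpdefT(\M)$ is an equivalence for any sufficiently saturated $\M$. You use Lemma~\ref{functions of Pro} for fullness, and the paper's first compactness fact for faithfulness. This is exactly the observation the paper records immediately after the proposition, that $\tpdefT(\U)\simeq\Pro(\defT(\textup{Th}(\U)))$.

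The paper's argument is more elementary and self-contained: it uses only raw compactness and produces a canonical functor, with no pseudo-inverse needed. Yours hands the work on morphisms to Kamensky's lemma. In exchange, it identifies both categories with a model-independent one. It also shows that a type-definable function in $\M$ is cut out by a compatible family of definable functions, so it defines a function in every model, not just in saturated ones.

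Your two caveats are not extra hypotheses of your own. The paper's proof also relies on them implicitly: reading type-definability as parameter-free, and fixing a common saturation cardinal $\kappa$ for $\M$ and $\N$.
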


\begin{proof}
    The required functor sends the realisation of a type $X$ in $\M$ to the realisation of $X$ in $\N$. To ensure this is well defined, we prove that if $X$ and $Y$ are types with the same realisations in $\M$, then they have the same realisations in $\N$.

    Suppose not, say there is some $a\in\N$ realising $X$ but not $Y$. Then, there is some formula $\psi(x)$ satisfied by $a$, such that $q\vdash\neg\psi$. Then, the type $p\cup\{\psi\}$ is consistent and realised by some $b$ in $\M$. Then, $b$ realises $X$ but not $Y$ in $\M$.

    Next, we prove that if $P$ is a type-definable function from the type $X$ to the type $Y$ in $\M$, then this is also the case in $\N$. Suppose this is not the case in $\N$.

    \textit{Case} 1:\quad In $\N$, the set defined by $P$ is not total. Consider that the types $X$ and $\exists y \ (Y(y)\wedge X(x)\wedge P(x,y))$ are equivalent in $\M$. By above, they are also equivalent in $\N$. Hence, every $a\in X(\N)$ has an image in $Y$ under $P$.

    \textit{Case} 2:\quad In $\N$, the set defined by $P$ is not single-valued. Let $a\in X(\N)$ be such that there are distinct $b,b'\in Y(\N)$ and $P(a,b)\wedge P(a,b)$ holds. Let $y_0$ be the coordinate where $b,b'$ differ. Consider that the type $X(x)\wedge Y(y)\wedge Y(y')\wedge y_0\neq y'_0\wedge P(x,y)\wedge P(x,y')$ is consistent and realised in $\M$. Hence, $P$ is not single-valued in $\M$ either.
\end{proof}

Given that $\U$ is sufficiently saturated, the two categories $\tpdefT(\mathcal{U})$ and $\Pro(\defT(\textup{Th}(\mathcal{U})))$ are equivalent. This is an easy corollary of Lemma~\ref{functions of Pro}.

\begin{example}\label{positive not work ex}
In the Boolean setting, we can always present a morphism of $\tpdefT(\mathcal{U})$ as a morphism of $\Pro(\defT(T))$ by using compactness of non-positive formulas. This is in contrast to the setting when $T$ is a theory in positive logic. Let $T$ be the empty positive theory in the empty language with a single sort $S$. It is known that the positively closed models of $T$ are precisely the singleton sets \cite[Remark~2.1.12]{kamsma2025positivelogicintroductionmodel}. Hence, the formula $x=x$ is interpreted as the graph of a function $\top\rightarrow S$. However, $T$ does not prove that it is a function.
\end{example}

\medskip

\subsection{Connection with the exact completion}
We return to the case of $\mathcal{U}^{heq}$. 

\begin{thm}
    The category $\tpdefT(\U^{heq})$ is $\infty$-exact. 
\end{thm}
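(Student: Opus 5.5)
The plan is to reduce everything to $\U$ by Lemma~\ref{heq translate} and its set-version remark. A type-definable set of $\U^{heq}$ in sorts $S_{E_k}$ pulls back to a type-definable set of representatives in $\U$, and this set is $E$-saturated. Conversely, every type-definable $E$-saturated set of representatives, for $E$ a product of relations in heq, descends to a type-definable set of $\U^{heq}$: given a type $\Sigma(y_r)$, take the formulas $R_\phi(y)$ for $\phi \in \Sigma$ and use saturation of $\U$ as in Remark~\ref{typedef quotient heq}. Working in the monster model, it then suffices to check everything at the level of realisations in $\U^{heq}$, since objects and morphisms of $\tpdefT(\U^{heq})$ are just subsets and graphs.

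First, $\infty$-regularity. Small limits exist. Products are concatenation of (small) tuples of variables and equalisers are conjunctions with $y=y'$. Equality on a sort $S_E$ is type-definable, because it is given by $\{R_\phi(y_r,y') \mid \phi\in E\}$ composed with representatives. Since only small tuples are allowed, products should be taken of small families; this matches the smallness conventions already in force. Images are infinitary existential quantification $\exists x\,(X(x)\wedge P(x,y))$. This is type-definable in $\U$ after translating, because $\U$ is saturated: a projection of a type-definable set is type-definable by compactness. Translating back uses the descent above. Regular epimorphisms are exactly the surjective type-definable maps. For the direction ``surjective $\Rightarrow$ coequaliser of its kernel pair'', the factorising map is defined as in Proposition~\ref{regular defT}(2) by an existential formula, which is again type-definable. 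For the converse, run the image-factorisation argument of Proposition~\ref{regular defT}(5). Stability under pullback and under small products then reduces to the corresponding statements in $\textbf{Set}$, using the axiom of choice for products.

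Second, exactness, which I expect to be the main obstacle. Let $E\rightrightarrows X$ be an internal equivalence relation in $\tpdefT(\U^{heq})$, so that $E$ is a type-definable equivalence relation on a type-definable $X\subseteq \prod_k S_{E_k}$. Pull back along the representative maps. This gives $\tilde E(x_r,x_r')$ on the home-sort type $\tilde X$: $\tilde E$ holds iff $\tilde X(x_r)\wedge\tilde X(x_r')\wedge E([x_r],[x_r'])$, extended by equality off $\tilde X$ as in the discussion after the definition of elimination of imaginaries. It is a type-definable equivalence relation on $\U$ by Lemma~\ref{heq translate}. The subtle point is that $\tilde E$ may have variable length exceeding $|T|$, so $S_{\tilde E}$ need not itself be a sort in heq. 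To handle this, I will invoke the decomposition lemma (Ben-Yaacov / Hart--Kim--Pillay) to write $\tilde E=\bigcap_i \tilde E_i$ with each $\tilde E_i$ in heq. The quotient is then the image of $\tilde X$ under the product map $x_r\mapsto([x_r]_{\tilde E_i})_i$, landing in $\prod_i S_{\tilde E_i}$. This map is type-definable by Remark~\ref{typedef quotient heq} applied coordinatewise, and it factors through $X$ since $\tilde E$ contains the representative equivalences. Its kernel pair is $\bigcap_i\tilde E_i$ modulo representatives, that is, $E$. It is the coequaliser because it is a surjection with kernel pair $E$, and by the first step that makes it a regular epimorphism, hence the coequaliser of its kernel pair.

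The delicate checks are twofold. One must verify that the induced map $X\to\prod_i S_{\tilde E_i}$ is well defined and single-valued on hyperimaginaries, not merely on representatives. One must also ensure that all type-definability claims use only sorts in heq. I would do both by repeated use of Lemma~\ref{heq translate} together with the compactness argument of Remark~\ref{typedef quotient heq}.
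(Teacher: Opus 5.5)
Your proposal is correct and follows essentially the paper's route: you pull the equivalence relation back to representatives via Lemma~\ref{heq translate}, pass to the corresponding hyperimaginary sort(s), and get the quotient by composing the type-definable quotient maps of Remark~\ref{typedef quotient heq}, with $\infty$-regularity checked on realisations, which the paper simply calls routine. Your extra step of splitting $\tilde E$ via the decomposition lemma into relations in heq adds care the paper's sketch lacks, since the paper tacitly assumes the pulled-back relation $\Gamma$ lives on a single heq sort, and this fails for objects in products of many sorts whose representatives are longer than $|T|$.
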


\begin{proof}
    The proof that $\tpdefT(\U^{heq})$ is $\infty$-regular is routine.

    We saw in Remark \ref{typedef quotient heq} that if an equivalence relation is type-definable in $T$, then it has a type-definable quotient in $\tpdefT(\U^{heq})$. 

    Now let $R([x_r]_E,[y_r]_E)$ be a type-definable equivalence relation in $\mathcal{U}^{heq}$, where $E$ is some type-definable equivalence relation in $T$. By Lemma \ref{heq translate}, there is an equivalence relation $\Gamma(x_r,y_r)$ type-definable in $T$, such that for all $a_r,b_r\in\mathcal{U}$,
    \[ \mathcal{U}\models\Gamma(a_r,b_r) \ \Leftrightarrow \ \mathcal{U}^{heq}\models R([a_r]_E,[b_r]_E). \]

    Now $\Gamma$ has a corresponding sort $S_\Gamma$ in $\mathcal{U}^{heq}$. Moreover, this comes with a type-definable quotient map $g: \mathcal{U}^I\rightarrow S_\Gamma$, where $I = |x_r|$. Similarly, there is a type-definable quotient map $f:\mathcal{U}^I\rightarrow S_E$. We define the quotient map $h:S_E \rightarrow S_\Gamma$ by
    \[ h([x_r]_E) = [y_r]_\Gamma \ \Leftrightarrow \ \exists z_r \ (f(z_r)=[x_r]_E \wedge g(z_r) = [y_r]_\Gamma). \]
    Type-definable sets are closed under infinite conjunection and existential quantification, so $h$ is type-definable indeed.
\end{proof}

Since every $T$-definable set is type-definable in $\U^{heq}$, there is a natural inclusion $\defT(T) \hookrightarrow \tpdefT(\U^{heq})$. Moreover, by compactness, this inclusion is full. By the universal property of the pro-completion and the exact completion, we obtain the following induced $\infty$-regular functors.

\[\begin{tikzcd}
	{\defT(T)} & {\Pro(\defT(T))} & {(\Pro(\defT(T)))_{ex/reg}} \\
	&& \tpdefT(\U^{heq})
	\arrow[hook, from=1-1, to=1-2]
	\arrow[curve={height=12pt}, from=1-1, to=2-3]
	\arrow[hook, from=1-2, to=1-3]
	\arrow[dashed, from=1-2, to=2-3]
	\arrow["F", dashed, from=1-3, to=2-3]
\end{tikzcd}\]

For notation, we consider the functors $\defT(T)\hookrightarrow\Pro(\defT(T))\hookrightarrow(\Pro(\defT(T)))_{ex/reg}$ as inclusions. One can think of the functor $F$ as follows. An object of $\defT(T)$, which is an $\Lang$-formula, is sent to its realisations in $\mathcal{U}^{heq}$. An object of $\Pro(\defT(T))$, which is a type of $T$, is sent to the cofiltered limit of the realisations of its formulas. From \cite[Corollary 10]{KAMENSKY2007180}, we know this is precisely the realisations of the type itself. An object of $(\Pro(\defT(T)))_{ex/reg}$, which is a quotient of a type by a type-definable equivalence relation in $T$, is sent to its corresponding quotient in $\tpdefT(\U^{heq})$. 

\begin{thm}
    The functor $F:(\Pro(\defT(T)))_{ex/reg} \rightarrow \tpdefT(\U^{heq})$ is an equivalence of categories.
\end{thm}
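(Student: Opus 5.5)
The plan is to mirror the proof of Theorem~\ref{Teq is exact}: show that $F$ is faithful, full and essentially surjective. Throughout, work with the Carboni--Vitale description (Definition~\ref{ex/reg carboni}) of $(\Pro(\defT(T)))_{ex/reg}$. An object is a pair $\langle X,E\rangle$, where $X$ is a type and $E\hookrightarrow X\times X$ is a type-definable equivalence relation. A morphism $\langle X,D\rangle\to\langle Y,E\rangle$ is a type-definable relation $R\subseteq X\times Y$ satisfying $RD=R=ER$, $D\le R^\circ R$ and $RR^\circ\le E$. Since $F$ is regular and $\tpdefT(\U^{heq})$ is exact, $F$ sends $\langle X,E\rangle$ to the quotient $X/E$, realised as the sort $S_E$ (restricted to the image of $X$) with the type-definable quotient map $\pi_E$ of Remark~\ref{typedef quotient heq}. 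The two facts used repeatedly are the following. First, $\Pro(\defT(T))\simeq\tpdefT(\U)$ via Lemma~\ref{functions of Pro}, so type-definable subsets of the home sort of $\U^{heq}$ are exactly the objects and subobjects of $\Pro(\defT(T))$. Second, Lemma~\ref{heq translate} and the remark following it translate any type-definable set in $\U^{heq}$ into a type in representatives.

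\emph{Essential surjectivity.} Take an object of $\tpdefT(\U^{heq})$, a type-definable set $Z\subseteq\prod_k S_{E_k}$ given by a set of $\Lang_{heq}$-formulas. The product of the $E_k$ is again a type-definable equivalence relation $E$ on the concatenated representative tuple $x_r$. By Lemma~\ref{heq translate}, $Z$ pulls back to a type $\Sigma(x_r)$ in $T$, which is $E$-invariant because it is defined on classes. The pair $\langle\Sigma,E\restriction_\Sigma\rangle$ is then an object of $(\Pro(\defT(T)))_{ex/reg}$. The restriction of the product of the quotient maps to $\Sigma$ is a regular epimorphism onto $Z$ with kernel pair $E\restriction_\Sigma$, so regularity of $F$ gives $F\langle\Sigma,E\restriction_\Sigma\rangle\cong Z$. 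One point needs care: sorts $S_E$ exist only for $|x_r|\le|T|$, so a product of infinitely many such sorts has to be treated as a single quotient of a longer type. This is harmless, because the categorical product of quotients in an $\infty$-regular category is again a quotient, by Theorem~\ref{pro is infty regular} style reasoning.

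\emph{Fullness and faithfulness.} Let $\tilde\theta:F\langle X,D\rangle\to F\langle Y,E\rangle$ be a type-definable function in $\U^{heq}$. Define
\[\theta(x,y):=\tilde\theta(\pi_D(x),\pi_E(y)),\]
and translate it by Lemma~\ref{heq translate} into a type over $X\times Y$ in $T$, that is, a subobject of $X\times Y$ in $\Pro(\defT(T))$. The relational identities $\theta D=\theta=E\theta$, $D\le\theta^\circ\theta$ and $\theta\theta^\circ\le E$ are then checked pointwise in the saturated $\U$, exactly as in the proof of Theorem~\ref{Teq is exact}. Because $\U$ is saturated, Proposition~\ref{isomorphism is calculated in monster} and Lemma~\ref{functions of Pro} let us verify these inclusions of subobjects by looking at realisations. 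Next, $F(\theta)=\tilde\theta$ holds because $\theta$ is the pullback of the graph along $\pi_D\times\pi_E$, and $F$ preserves pullbacks. For faithfulness, two morphisms $\theta,\mu$ with $F\theta=F\mu$ have the same realisations of their pullbacks along the quotient maps. These pullbacks are $\theta$ and $\mu$ themselves as subobjects of $X\times Y$, and subobjects of a type with the same realisations in $\U$ coincide. Hence $\theta=\mu$.

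\emph{Main obstacle.} The delicate step is the translation: a type-definable object in $\U^{heq}$, built from the positive predicates $R_\phi$ with infinitary conjunctions and existential quantifiers, must correspond to a genuine \emph{type} in $T$. In other words, Lemma~\ref{heq translate} must apply to the infinitary positive formulas defining objects of $\tpdefT(\U^{heq})$, not only to first-order $\Lang_{heq}$-formulas. I would check this first by induction on the formula, using the remark after Lemma~\ref{heq translate} for conjunctions. For $\exists$ over hyperimaginary variables, I would quantify over representatives, which is possible since every class has a representative in $\U$. For infinitely many existential variables, I would use compactness in the saturated $\U$ to stay type-definable. Once this translation is secure, the remaining steps are routine adaptations of Theorem~\ref{Teq is exact}.
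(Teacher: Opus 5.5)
Your proposal is correct and follows essentially the same route as the paper. Essential surjectivity uses the $E$-invariant type $\Sigma$ from Lemma~\ref{heq translate} and the quotient of $E\upharpoonright_\Sigma$; fullness pulls $\tilde\theta$ back to a type-definable relation in $T$, checks the Carboni--Vitale conditions on realisations in $\U$, and gets $F(\theta)=\tilde\theta$ from the pullback of the graph. The only variation is in faithfulness, where you compare the pulled-back subobjects directly (as in Theorem~\ref{Teq is exact}) instead of arguing pointwise from totality and $E$-invariance, and your care about long tuples and infinitary translation just makes explicit what the paper leaves implicit.
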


\begin{proof}
    We prove that $F$ is essentially surjective and fully faithful.
    
    Essential surjectivity: \quad Let $\Gamma([x]_E)$ be a type in $T^{heq}$, where $E$ is an equivalence relation on $\mathcal{U}^I$ type-definable in $T$. Then there is a type $\Sigma(x)$ in $T$ such that for all $a\in\mathcal{U}$
    \[ \Gamma([a]_E) \ \Leftrightarrow \ \Sigma(a). \]
    Define $E\upharpoonright_\Sigma$ to be the equivalence relation which is $E$ restricted to $\Sigma$. Categorically, this is the pullback $E \times_{\mathcal{U}^I\times\mathcal{U}^I} (\Sigma\times\Sigma)$. It is not hard to see that the following is a quotient diagram. 
    \[F(E\upharpoonright_\Sigma) \rightrightarrows F(\Sigma) \rightarrow \Gamma\]
    In $(\Pro(\defT(T)))_{ex/reg}$, there is $\textup{coeq}(E\upharpoonright_\Sigma \rightrightarrows \Sigma)$. Now, $F$ preserves quotients, so we have
    \[F(\textup{coeq}(E\upharpoonright_\Sigma \rightrightarrows \Sigma)) \cong \Gamma.\]

\smallskip

    Fullness: \quad Let $f\in\Hom_{\tpdefT(\U^{heq})}(F(X/D),F(Y/E))$, where $X/D,Y/E$ denote the quotients of $X,Y$ by equivalence relations $D,E$ respectively, and $X,Y,D,E\in\Pro(\defT(T))$. In other words, $f$ is type-definable in $\mathcal{U}^{heq}$. By Lemma \ref{heq translate}, there is a relation $R$ from $X$ to $Y$ which is type-definable in $T$ and such that for all $a,b\in\mathcal{U}$,
    \begin{equation}
        R(a,b) \ \Leftrightarrow \ f([a]_D)=[b]_E. \tag{$\dagger$}
    \end{equation}
    Now, we can consider $R$ as an object in $\Pro(\defT(T))$ and check that it satisfies Definition \ref{ex/reg carboni}. 
    
    First, we show that $R = RD$. Note that $RD(x,y)$ is the type $\exists x' \ D(x,x') \wedge R(x',y)$. By Proposition \ref{isomorphism is calculated in monster}, it is sufficient to show that $RD$ and $R$ have the same realisations in $\mathcal{U}$. That $R(\mathcal{U})\subseteq RD(\mathcal{U})$ follows from the reflexivity of $R$. Conversely, if $D(a,a')\wedge R(a',b)$ holds for some $a,a',b\in\mathcal{U}$, then $f([a]_D) = f([a']_D) = [b]_E$. Thus, we have $R(a,b)$. Similarly, one may show that $R = ER$.

    Second, we show that $D\leq R^\circ R$. Now, $R^\circ R(x,x')$ is the type $\exists y \ (R(x,y)\wedge R(x',y))$. It suffices to show that there is a monomorphism $D(x,x') \hookrightarrow R^\circ R(x,x')$ in $\Pro(\defT(T))$ given $(x,x')\mapsto (x,x')$. Now, this is a type-definable set, whose realisations form the graph of a function from $D(\mathcal{U})$ to $R^\circ R(\mathcal{U})$. By Lemma \ref{functions of Pro}, this corresponds to a morphism $\iota\in\Hom_{\Pro(\defT(T))}(D,R^\circ R)$. 
    To show that $\iota$ is a monomorphism, we consider $\Pro(\defT(T))$ as a subcategory of \textbf{Set} by interpreting in $\mathcal{U}$. Certainly, $\iota(\mathcal{U})$ is injective, so is a monomorphism in \textbf{Set}. Hence, it is also a monomorphism in $\Pro(\defT(T))$. Similarly, one may show that $RR^\circ \leq E$.
    
    Hence, $R\in\Hom_{(\Pro(\defT(T)))_{ex/reg}}(X/D,Y/E)$. For notation, we write $\tilde{f}$ for this morphism and reserve $R$ for the object.
    We claim that $F(\tilde{f})=f$. The condition $(\dagger)$ means that in ${\tpdefT(\U^{heq})}$, the following is a pullback square.
    \[\begin{tikzcd}
    	{F(R)} & {F(X\times Y)} \\
    	{\textup{graph}(f)} & {F(X/D\times Y/E)}
    	\arrow[hook, from=1-1, to=1-2]
    	\arrow[two heads, from=1-1, to=2-1]
    	\arrow["\lrcorner"{anchor=center, pos=0.125}, draw=none, from=1-1, to=2-2]
    	\arrow[two heads, from=1-2, to=2-2]
    	\arrow[hook, from=2-1, to=2-2]
    \end{tikzcd}\]
    In $(\Pro(\defT(T)))_{ex/reg}$, the morphism $\tilde{f}$ being the morphism induced by $R$ means that we have the following diagram.
    \[\begin{tikzcd}
        R & {X\times Y} \\
        {\textup{graph}(\tilde{f})} & {X/D\times Y/E}
        \arrow[hook, from=1-1, to=1-2]
        \arrow[two heads, from=1-1, to=2-1]
        \arrow["\lrcorner"{anchor=center, pos=0.125}, draw=none, from=1-1, to=2-2]
        \arrow[two heads, from=1-2, to=2-2]
        \arrow[hook, from=2-1, to=2-2]
    \end{tikzcd}\]
    Hence, we have 
    \[\textup{graph}(F(\tilde{f})) \cong F(\textup{graph}(\tilde{f})) \cong \textup{graph}(f), \]
    where the first isomorphism comes from the regularity of the functor $F$, and the second isomorphism comes from the uniqueness of the regular-epimorphism-monomorphism factorisation in ${\tpdefT(\U^{heq})}$. Then, $F(\tilde{f}) = f$ by the universality of taking the graph of a morphism.

\smallskip

    Faithfulness: \quad Let $f,g\in\Hom_{(\Pro(\defT(T)))_{ex/reg}}(X/D,Y/E)$, so there are relations $R,S$ from $X$ to $Y$ in $\Pro(\defT(T))$ such that the following two squares are pullback squares.
    \[\begin{tikzcd}
        R & {X\times Y} && S & {X\times Y} \\
        {\textup{graph}(f)} & {X/D\times Y/E} && {\textup{graph}(g)} & {X/D\times Y/E}
        \arrow[hook, from=1-1, to=1-2]
        \arrow[two heads, from=1-1, to=2-1]
        \arrow["\lrcorner"{anchor=center, pos=0.125}, draw=none, from=1-1, to=2-2]
        \arrow[two heads, from=1-2, to=2-2]
        \arrow[hook, from=1-4, to=1-5]
        \arrow[two heads, from=1-4, to=2-4]
        \arrow["\lrcorner"{anchor=center, pos=0.125}, draw=none, from=1-4, to=2-5]
        \arrow[two heads, from=1-5, to=2-5]
        \arrow[hook, from=2-1, to=2-2]
        \arrow[hook, from=2-4, to=2-5]
    \end{tikzcd}\]
    Suppose $F(f)=F(g)$. Then in $\mathcal{U}$, for any $a,a'$ realising $X$ and any $b,b'$ realising $Y$, such that $R(a,b)$ and $S(a',b')$, if $D(a,a')$, then $E(b,b')$. Now, the relations $R,S$ satisfy Definition \ref{ex/reg carboni}. In particular, they are invariant under both equivalence relations $D,E$, and are total and single-valued over their quotients. 
    
    Suppose $R(a,b)$. As $S$ is total, $S(a,b')$ holds for some $b'$. By the above, $E(b,b')$ holds. $S$ is invariant under $E$, so $S(a,b)$ holds. The converse also holds by symmetry. Hence, $R=S$, so $f=g$.
\end{proof}

\begin{remark}
    In the positive setting, the functor $F$ still exists and is essentially surjective and faithful. However, as Example~\ref{positive not work ex}, it might not be full. The use of Lemma~\ref{functions of Pro} and Proposition~\ref{isomorphism is calculated in monster} in the proof for fullness prevents the proof from transferring to the positive setting.
\end{remark}

\medskip

\subsection{A negative result}
One might hope that there is some free construction on $T$, giving us a first-order (or even positive) theory that eliminates hyperimaginaries, similar to how $T^{eq}$ is the free first-order theory on $T$ that eliminates imaginaries. Categorically, the hope is that there is some subcategory $\C$ of $(\Pro(\defT(T)))_{ex/reg}$, such that $\Pro(\C) \simeq (\Pro(\defT(T)))_{ex/reg}$. If such a subcategory exists, then we can think of its internal theory as the free construction on $T$ that eliminates hyperimaginaries. Alas, this is not true. We show such a subcategory does not exist for Example~\ref{finset}, that is, there is no category, whose pro-completion is \textbf{CHaus}. Now, the dual of this statement---$\textbf{CHaus}^{op}$ is not the ind-completion of any category---is well-known. For example, it can be deduced from \cite{LIEBERMAN2023107245}, where they showed that $\textbf{CHaus}^{op}$ is not finitely concrete. We give a more direct proof here.

By Remark~\ref{cocompact}, if such a subcategory of \textbf{CHaus} exists, then it has to be the full subcategory consisting of the cocompact objects. We have the following characterisation of cocompact objects in \textbf{CHaus}. This is folklore, but it seems that the proof is not written down anywhere.

\begin{prop}
    The cocompact objects of \textup{\textbf{CHaus}} are precisely the finite discrete spaces.
\end{prop}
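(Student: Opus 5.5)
We prove the two inclusions separately. Recall from Remark~\ref{cocompact} that $K$ is cocompact in \textbf{CHaus} if $\Hom(-,K)$ sends cofiltered limits to filtered colimits. Concretely, for every cofiltered diagram $(X_i)$ with limit $X=\lim X_i$, two things must hold. First, every continuous map $X\to K$ factors through some projection $p_i:X\to X_i$. Second, two maps $X_i\to K$ that agree after composing with $p_i$ already agree after composing with some transition map $X_j\to X_i$.

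\emph{Finite discrete spaces are cocompact.} Let $K$ be finite discrete and $f:X\to K$ continuous. Then $f$ is a finite partition of $X$ into clopen sets $f^{-1}(k)$. In a cofiltered limit of compact Hausdorff spaces, the sets $p_i^{-1}(U)$ with $U\subseteq X_i$ open form a basis closed under finite unions, directed in $i$. By compactness, each clopen set $C\subseteq X$ equals $p_i^{-1}(U)$ for some $i$ and some open $U$, and similarly its complement. So for large $i$ we get open sets $U_k\subseteq X_i$ with $p_i^{-1}(U_k)=f^{-1}(k)$. These $U_k$ need not cover $X_i$ or be disjoint, only on the image $p_i(X)$.

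The main obstacle is to replace $X_i$ by some $X_j$ whose image lands where the $U_k$ form a clopen partition. We use the standard fact that $p_i(X)=\bigcap_{j\to i}\operatorname{im}(X_j\to X_i)$, which follows from compactness and nonempty cofiltered limits of compact Hausdorff spaces. The set $p_i(X)$ lies in the open set $\bigcup U_k$ minus the closed overlaps $\overline{U_k}\cap\overline{U_l}$. Shrinking the $U_k$ first to open sets with pairwise disjoint closures, we get a compact set $\operatorname{im}(X_j\to X_i)$ contained in the disjoint union of the $U_k$ for some $j$. Then $f$ factors through $X_j$ via $x\mapsto k$ iff the image of $x$ lies in $U_k$, which is continuous. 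The uniqueness clause is proved the same way: if $g,h:X_i\to K$ agree on $p_i(X)$, the closed set $\{g\neq h\}$ misses $\bigcap_j\operatorname{im}(X_j\to X_i)$, so by compactness it misses one of these images.

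\emph{Cocompact objects are finite discrete.} Let $K$ be cocompact. Take the cofiltered limit $\beta K_d=\lim$ over the finite partitions of the discrete set $K$. More cleanly, use the canonical presentation of the Stone space $\beta(K_d)$ as the cofiltered limit of its finite discrete quotients. The identity $K_d\to K$ extends to a continuous surjection $\beta(K_d)\to K$. By cocompactness it factors through a finite discrete quotient $F$, so $K$ is a continuous image of a finite discrete space and hence finite. A finite Hausdorff space is discrete. Alternatively, write $K$ itself via the surjection above: any compact Hausdorff $K$ is a quotient of a Stone space, and factoring through a finite stage forces $K$ to be finite. I expect this direction to be routine. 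The genuine work is the compactness argument in the first direction.
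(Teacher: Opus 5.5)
Your proof is correct and follows the paper's route. For one direction you use the Stone--\v{C}ech compactification of the underlying discrete set, presented as a cofiltered limit of finite discrete quotients. For the other, a compactness argument writes the fibres of $f$ as preimages of open sets in some $X_i$ and then passes to a stage $X_j$ whose image in $X_i$ lies in a suitable open set.

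Your version is more careful than the paper's on two points. First, you separate the pairwise disjoint compact sets $p_i(f^{-1}(k))$ by disjoint open sets; the paper's map $h$ on $\bigcup_r W_r$ need not be well defined, since sets coming from different fibres may overlap away from $p_i(X)$. Second, you check the uniqueness half of cocompactness, which the paper omits. You should, however, drop your claim that $p_i(X)$ avoids $\overline{U_k}\cap\overline{U_l}$ before shrinking. It can fail: take $p_i(X)=\{0,\tfrac12\}\subseteq[0,1]$ with $U_1=[0,\tfrac12)$ and $U_2=(\tfrac14,1]$. It is also unnecessary once the shrinking is justified by the normality of $X_i$.
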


\begin{proof}
    $(\Rightarrow)$ \quad Suppose $X\in\textbf{CHaus}$ is cocompact. Let $X_d$ be the space with the same underlying set as $X$ and the discrete topology. Let $\beta X_d$ be the Stone-\v{C}ech compactification of $X_d$. Then, there exists a unique continuous function $f:\beta X_d\rightarrow X$ extending the obvious morphism $\iota:X_d\rightarrow X$. Now, $\beta X_d$ is a Stone space, so it is a cofiltered limit of finite discrete spaces, say $\beta X_d\cong \lim F_i$, where each $F_i$ is finite. Since $X$ is cocompact, $f$ factors through some $F_i$. Hence, we obtain the following diagram.
    \[\begin{tikzcd}
    	{X_d} & X \\
    	{\beta X_d} & {F_i}
    	\arrow["\iota", from=1-1, to=1-2]
    	\arrow[from=1-1, to=2-1]
    	\arrow["f"{description}, from=2-1, to=1-2]
    	\arrow["{\pi_i}"', from=2-1, to=2-2]
    	\arrow["{f_i}"', from=2-2, to=1-2]
    \end{tikzcd}\]
    Since $\iota$ is a surjection, $f_i$ is also surjective. Since $F_i$ is finite, $X$ also has to be finite. 

    \smallskip

    $(\Leftarrow)$ \quad Let $F$ be a finite set with the discrete topology. Let $(X_i)_{i\in I}$ be a cofiltered system in \textbf{CHaus}, let $X$ denote the limit, let $p_i:X\rightarrow X_i$ denote the projection, and let $f:X \rightarrow F$ be a continuous function. Since $F$ is finite discrete, we can write $X$ as the disjoint union of the fibres $f^{-1}(a)$ for $a\in F$. Each fibre is clopen.
    
    Now, $X$ has the limit topology, so has basis given by $\{p_i^{-1}(V) \ | \ i\in I, V\subseteq X_i \textup{ open}\}$. For each $x\in X$, we choose some basic open $p_{i_x}^{-1}(V_x)\ni x$, such that $p_{i_x}^{-1}(V_x) \subseteq f^{-1}(f(x))$. This forms a cover of $X$. Since $X$ is compact, there is a finite subcover $\{p_{i_1}^{-1}(V_1),\ldots,p_{i_n}^{-1}(V_n)\}$.
    
    Since $I$ is cofiltered, there exists $i\in I$ with transition maps $p_{ii_r}:X_i\rightarrow X_{i_r}$ for $r=1,\ldots,n$, such that $p_{ii_r}p_i=p_{i_r}$. For each $r$, let $W_r = p_{ii_r}^{-1}(V_r) \subseteq X_i$. Then $p_i^{-1}(W_r) = p_{i_r}^{-1}(V_r)$, so $\{p_i^{-1}(W_r)\}_{r=1}^n$ is a cover of $X$. Moreover, since we have chosen each basic open to be contained in a single fibre of $f$, there exists a unique continuous map $h:\bigcup_{r=1}^n W_r \rightarrow F$ extending $f$ along $p_i$. 

    Note that $p_i(X)\subseteq W:= \bigcup W_r \subseteq X_i$. We show that there is some $j$ with a transition map $p_{ji}:X_j\rightarrow X_i$, whose image is contained in $W$. Otherwise, for all $j\in I$ with a morphism $j\rightarrow i$, the closed sets $p_{ji}(X_j)-W$ are non-empty and form a cofiltered system of compact Hausdorff spaces. Therefore, the limit, which is a subset of $X$, is non-empty, contradicting the fact that $p_i(X)\subseteq W$. Hence, fix $j$ such that $p_{ji}(X_j)\subseteq W$ and write $p_{ji}^W:X_j\rightarrow W$ for the same map with codomain restricted to $W$. Then, $f$ factors through $X_j$ via the map $h p^W_{ji}:X_j \rightarrow F$.
\[\begin{tikzcd}
	X && F \\
	{X_j} \\
	{X_i} & W & {W_r} \\
	{X_{i_r}} && {V_r}
	\arrow["f", from=1-1, to=1-3]
	\arrow["{p_j}"', from=1-1, to=2-1]
	\arrow["{p_{ji}}"', from=2-1, to=3-1]
	\arrow["{p_{ji}^W}", from=2-1, to=3-2]
	\arrow["{p_{ii_r}}"', from=3-1, to=4-1]
	\arrow["h"', from=3-2, to=1-3]
	\arrow[hook, from=3-2, to=3-1]
	\arrow[hook, from=3-3, to=3-2]
	\arrow["\lrcorner"{anchor=center, pos=0.125, rotate=-90}, draw=none, from=3-3, to=4-1]
	\arrow[from=3-3, to=4-3]
	\arrow[hook, from=4-3, to=4-1]
\end{tikzcd}\]
\end{proof}

The proposition shows that we have $\textup{Cocompact}(\textbf{CHaus}) \simeq \textbf{FinSet}$, whose pro-completion is \textbf{Stone}. In particular, there is no subcategory of \textbf{CHaus}, whose pro-completion is \textbf{CHaus} itself. 

\medskip

We conclude by comparing a few categories obtained by applying the pro-completion and the exact completion to $\defT(T)$ in different ways. The following diagram is obtained by inducing the dashed arrows in the labelled order using the universal properties of the pro-completion and the exact completion.
\[\begin{tikzcd}
	& {\defT(T)} && {\defT(T)_{ex/reg}} & {\defT(T^{eq})} \\
	& {\Pro(\defT(T))} && {\Pro (\defT(T)_{ex/reg})} \\
	{\tpdefT(\U^{heq})} & {(\Pro(\defT(T)))_{ex/reg}} && {(\Pro (\defT(T)_{ex/reg}))_{ex/reg}}
	\arrow[from=1-2, to=1-4]
	\arrow[from=1-2, to=2-2]
	\arrow["\simeq"{description}, draw=none, from=1-4, to=1-5]
	\arrow[from=1-4, to=2-4]
	\arrow["3"{description, pos=0.2}, dashed, from=1-4, to=3-2]
	\arrow["1"{description, pos=0.3}, dashed, from=2-2, to=2-4]
	\arrow[from=2-2, to=3-2]
	\arrow["4"{description}, dashed, from=2-4, to=3-2]
	\arrow[from=2-4, to=3-4]
	\arrow["\simeq"{description}, draw=none, from=3-1, to=3-2]
	\arrow["2"{description}, shift left=3, dashed, from=3-2, to=3-4]
	\arrow["\sim"{description}, draw=none, from=3-2, to=3-4]
	\arrow["5"{description}, shift left=3, dashed, from=3-4, to=3-2]
\end{tikzcd}\]

The bottom row is always an equivalence. If $T$ eliminates hyperimaginaries, then the bottom right three categories are equivalent. If $T$ eliminates imaginaries, then the three rows are equivalences. If $T$ eliminates both imaginaries and hyperimaginaries, then the bottom four categories are equivalent. Hence, we obtain our main theorem.

\begin{thm}
    \input{6main_theorem}
\end{thm}